\definecolor{b}{HTML}{4472c4}
\definecolor{o}{HTML}{ED7D31}
\definecolor{g}{HTML}{70ad47}
\newtheorem{theorem}{Theorem}[section]
\newtheorem{lemma}[theorem]{Lemma}
\newtheorem{corollary}[theorem]{Corollary}
\newtheorem{definition}[theorem]{Definition}
\newtheorem{proposition}[theorem]{Proposition}
\newtheorem{fact}[theorem]{Fact}
\newtheorem{claim}[theorem]{Claim}
\newtheorem{problem}[theorem]{Problem}
\theoremstyle{remark}
\newtheorem{remark}[theorem]{Remark}
\theoremstyle{plain}
\def\dist{{\rm dist}}
\def\diam{{\rm diam}}
\def\dist{{\rm dist}}
\newcommand{\seminorm}[1]{{\left\vert\kern-0.25ex\left\vert\kern-0.25ex\left\vert #1
    \right\vert\kern-0.25ex\right\vert\kern-0.25ex\right\vert}}
\begin{document}

\title[Discrete Poincar\'e inequalities and universal approximators for random graphs]{Discrete Poincar\'e inequalities and universal approximators for random graphs}

\author{Dylan J. Altschuler, Pandelis Dodos, Konstantin Tikhomirov and Konstantinos Tyros}

\address{Department of Mathematical Sciences, Carnegie Mellon University}
\email{daltschu@andrew.cmu.edu}

\address{Department of Mathematics, University of Athens, Panepistimiopolis 157 84, Athens, Greece}
\email{pdodos@math.uoa.gr}

\address{Department of Mathematical Sciences, Carnegie Mellon University}
\email{ktikhomi@andrew.cmu.edu}

\address{Department of Mathematics, University of Athens, Panepistimiopolis 157 84, Athens, Greece}
\email{ktyros@math.uoa.gr}

\thanks{2020 \textit{Mathematics Subject Classification}: 05C12, 05C48, 05C50, 05C80, 46B85.}
\thanks{\textit{Key words}: expander graphs, random regular graphs, nonlinear Poincar\'{e} inequalities.}


\begin{abstract}
Nonlinear Poincar\'e inequalities are indispensable tools in the study of dimension reduction and low-distortion embeddings of graphs into metric spaces, and have found remarkable algorithmic applications. A basic open problem, posed by Jon Kleinberg (2013), asks whether the optimal nonlinear Poincar\'e constant for maps between two independent $3$-regular random graphs is dimension-free, i.e., independent of vertex-set sizes. We give a complete and affirmative resolution to Kleinberg's problem, also allowing for arbitrary graph degrees. As a corollary, we obtain a stochastic construction of $O(1)\text{-universal}$ approximators for random graphs, answering a question of Mendel and Naor.
\end{abstract}

\maketitle

\tableofcontents

\numberwithin{equation}{section}

\section{Introduction} \label{sec1}

\subsection{Nonlinear spectral gaps}

Let $d\geqslant 3$ be an integer, let $G=(V_G,E_G)$ be a $d$-regular graph, set $n:=|V_G|$, and write the eigenvalues of the adjacency matrix $A_G$ of $G$ in increasing order as
\begin{equation} \label{eigen-e1}
\lambda_n(G)\leqslant \cdots \leqslant \lambda_2(G)\leqslant \lambda_1(G)=d.
\end{equation}
It is a standard observation that the quantity $\frac{d}{2(d-\lambda_2(G))}$ (essentially, the reciprocal of the spectral gap) can also be defined as the best constant $\gamma\in (0,\infty]$ such that for any $f\colon V_G\to \mathbb{R}^k$,
\begin{equation} \label{eq-poincare-e1}
\frac{1}{|V_G|^2} \sum_{v,u\in V_G} \big\|f(v)-f(u)\big\|_2^2 \leqslant
\frac{\gamma}{|E_G|} \sum_{\{v,u\}\in E_G} \big\|f(v)-f(u)\big\|_2^2.
\end{equation}
Nonlinear (i.e., non-Euclidean) Poincar\'{e} inequalities are extensions of~\eqref{eq-poincare-e1} to other norms and, more generally, functions taking values in metric spaces. Such inequalities have proven indispensable in the study of embeddings and metric dimension reduction, and hold a role of central importance in a number of areas, especially metric geometry and algorithm design.

Some of the earlier appearances of nonlinear Poincar\'e inequalities came from the study of $\text{bi-Lipschitz}$ embeddings, such as the works of Enflo \cite{E76}, Bourgain--Milman--Wolfson \cite{BMW86}, Gromov \cite{Gr83}, Pisier \cite{Pi86}, Linial--London--Rabinovich \cite{LLR95}, and Matou\v{s}ek \cite{Ma97}.

Gromov \cite{Gr03} initiated the usage of nonlinear Poincar\'{e} inequalities in the context of coarse embeddings, with connections to the Novikov conjecture. This line of work was further investigated by Ozawa \cite{Oz04}, Kasparov--Yu \cite{KY06}, and Pisier \cite{Pi10}, eventually leading to Lafforgue's~\cite{La08} construction of \emph{superexpanders}, regular graphs with remarkable coarse non-embeddability properties. Alternate constructions of superexpanders were also given by $\text{Mendel--Naor}$ \cite{MN14}, and $\text{de Laat--de la Salle}$~\cite{dLdS23}.

Variants of nonlinear Poincar\'e inequalities have also been used fruitfully in algebraic settings; we refer to the work of Naor--Silberman \cite{NS11} and the references therein.

More recently, nonlinear Poincar\'{e} inequalities have found spectacular algorithmic applications. We refer the reader to the works of Andoni--Naor--Nikolov--Razenshteyn--Waingarten \cite{ANNRW18} and $\text{Mendel--Naor \cite{MN15}}$, as well as the survey paper of Eskenazis \cite{Es22}, for an overview of these exciting developments.

A systematic study of nonlinear versions of \eqref{eq-poincare-e1} was undertaken in a series of papers \cite{MN13,MN14,Na14,MN15} by Mendel and Naor. The following definition---which first appeared in \cite{MN14}---conceptualizes and unifies the aforementioned nonlinear Poincar\'{e} inequalities.

\begin{definition}[Nonlinear spectral gap]
Let $d\geqslant 3$ be an integer, and let $G=(V_G,E_G)$ be a $d\text{-regular}$ graph. Also let $\mathcal{M}=(M,\varrho)$ be a metric space, and let $p\geqslant 1$. By $\gamma(G,\varrho^p)$ we denote the smallest constant $\gamma\in (0,\infty]$ such that for any $f\colon V_G \to M$,
\begin{equation} \label{eq-poincare}
\frac{1}{|V_G|^2} \sum_{v,u\in V_G} \varrho\big(f(v),f(u)\big)^p \leqslant
\frac{\gamma}{|E_G|} \sum_{\{v,u\}\in E_G} \varrho\big(f(v),f(u)\big)^p.
\end{equation}
\end{definition}

\subsection{Main results}

This article investigates nonlinear spectral gaps when the metric space~$\mathcal{M}$ is the vertex set of a uniformly random regular graph $H$ equipped with the shortest-path distance $\dist_H$. In addition to being a natural setting with intrinsic interest---random regular graphs are ubiquitous in mathematics and theoretical computer science---there are also significant motivating applications that will be discussed shortly.

\subsubsection{ \! } \label{subsubsec1.2.1}

A basic open question in this context, asked by Jon Kleinberg \cite[Question 2.4]{MN15}, is the following.

\begin{problem}[Kleinberg] Let $G$ and $H$ be two independent and uniformly random 3-regular graphs. Is the Poincar\'{e} constant $\gamma(G,\dist_H^2)$ bounded by a universal constant, independent of $|V_G|$ and $|V_H|$, with high probability?
\end{problem}

Our first main result resolves this question affirmatively with significant added generality. In particular, our result holds when $G$ and $H$ have arbitrary (possibly different) degrees, and it upper bounds $\gamma(G,\dist_H^p)$ for any $p \geqslant 1$, independently of the degree of $H$.

For the remainder of this article, for any pair $n\geqslant d\geqslant 3$, $G(n,d)$ denotes the set of all $d$-regular graphs on $[n]:=\{1,\dots,n\}$, and $\mathbb{P}_{G(n,d)}$ denotes the uniform probability measure on~$G(n,d)$; we will always assume $dn$ is even.

\begin{theorem}[Affirmative solution of Kleinberg's problem] \label{thm-kleinberg}
Let $d, \Delta\geqslant 3$ be integers, and for every~$p\geqslant 1$, set
\begin{equation} \label{klein-e1}
\Gamma(d,p):= \exp\Big( 10^{12}\, 2^p\, \log^2 d\Big).
\end{equation}
Then\footnote{See Section \ref{sec2} for details concerning our asymptotic notation.},
\begin{equation} \label{klein-e2}
\mathbb{P}_{G(n,d)}\times \mathbb{P}_{G(m,\Delta)} \Big[(G,H)\colon \gamma(G,\dist_H^p)\leqslant \Gamma(d,p)
\text{ for every } p\geqslant 1\Big] \geqslant 1-O_{d,\Delta}\Big(\frac{1}{\min\{n,m\}^\tau}\Big),
\end{equation}
where $\tau=\tau(d,\Delta)>0$.
\end{theorem}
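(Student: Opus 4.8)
The plan is to bound $\gamma(G,\dist_H^p)$ by controlling how far apart the images of two adjacent vertices of $G$ can be under an arbitrary map $f\colon V_G\to V_H$, using the expansion of both graphs. The key structural fact is that a random $\Delta$-regular graph $H$ on $m$ vertices has, with high probability, diameter $O_\Delta(\log m)$ and, more importantly, good \emph{local} expansion: balls grow geometrically up to radius $c\log m$. This lets us replace the unbounded metric $\dist_H$ by something quantitatively tractable. Dually, a random $d$-regular graph $G$ is, with high probability, a good spectral expander, so the linear Poincar\'e inequality \eqref{eq-poincare-e1} holds for $G$ with a dimension-free constant $\gamma(G,\|\cdot\|_2^2)=O(1)$.

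First I would reduce to the case $p=2$ up to the factor $2^p$ appearing in $\Gamma(d,p)$: for a map $f$ into $(V_H,\dist_H)$, one has $\dist_H(f(v),f(u))^p \leqslant$ (something like) $2^{p-1}(\dist_H(f(v),f(w))^p+\dist_H(f(w),f(u))^p)$, so an averaging/triangle-inequality argument lets one trade a $p$-th power Poincar\'e inequality for a $2$-nd power one at the cost of a $2^{O(p)}$ factor; this is why the degree of $H$ does not enter the bound. Next, the heart of the argument: given $f\colon V_G\to V_H$, I would embed $V_H$ into a Euclidean (or Hilbert) space in a way that is bi-Lipschitz on the scale relevant to $f$, or more precisely use the spectral embedding of $H$ together with a comparison between $\dist_H$ and the Euclidean distance of heat-kernel / eigenvector coordinates. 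Because $G$ is a spectral expander, the Euclidean Poincar\'e inequality \eqref{eq-poincare-e1} forces the Euclidean energy of $f$ (post-embedding) to be comparable to its Euclidean variance; translating back through the bi-Lipschitz comparison on $H$ yields $\sum_{v,u}\dist_H(f(v),f(u))^2 \lesssim \Gamma \sum_{\{v,u\}\in E_G}\dist_H(f(v),f(u))^2$ with $\Gamma$ depending only on the expansion constants of $G$ and $H$ and hence, for random graphs, only on $d$ (the $\log^2 d$ coming from the best known spectral expansion estimates for random $d$-regular graphs, e.g.\ via Friedman's theorem or its quantitative refinements). Finally, I would assemble the high-probability statement: intersect the event that $G$ is a $\lambda$-spectral expander (probability $1-O_d(n^{-\tau_1})$), the event that $H$ has the required local-expansion/diameter properties (probability $1-O_\Delta(m^{-\tau_2})$), and set $\tau=\min\{\tau_1,\tau_2\}$.

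The main obstacle is the comparison step between the intrinsic path metric $\dist_H$ and a Euclidean structure on which the expansion of $G$ can be leveraged. A single spectral embedding of $H$ is \emph{not} bi-Lipschitz with dimension-free distortion — random regular graphs do not embed into Hilbert space with $O(1)$ distortion — so one cannot simply linearize. The real content must be a genuinely nonlinear argument: one needs a substitute for the ``metric midpoint / barycenter'' machinery (as in Mendel--Naor's theory of metric Markov cotype, or an iterated/multiscale decomposition of $\dist_H$ into a superposition of cut-like pseudometrics each of which is controllable), showing that the target metric $(V_H,\dist_H^2)$ has nonlinear spectral calculus good enough that expander domains map into it with bounded Poincar\'e constant. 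Carrying this out while keeping the constant \emph{independent of $\Delta$} — so that it survives the $p$-th power reduction with only a $2^p$ loss — is the delicate point, and is presumably where the bulk of the paper's technical work (the ``discrete Poincar\'e inequalities'' of the title) resides.
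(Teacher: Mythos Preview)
Your proposal does not constitute a proof: you correctly diagnose in your final paragraph that the linearization step (spectral/heat-kernel embedding of $H$ into Hilbert space) fails, since random regular graphs have $\Omega(\log m)$ Euclidean distortion, and you offer no working replacement---only a gesture toward Markov cotype or cut decompositions. That is precisely the obstacle, and the paper's contribution is a concrete mechanism to get around it, which your proposal does not supply.

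Two specific points of divergence from the actual argument. First, your reduction goes in the wrong direction: the paper establishes the case $p=1$ directly and then extrapolates \emph{upward} to all $p\geqslant 1$ via a metric version of Matou\v{s}ek's extrapolation (Theorem~\ref{extrapolation}); this is where the $2^p$ in $\Gamma(d,p)$ comes from. There is no clean reduction from general $p$ down to $p=2$ by triangle-inequality averaging. Second, the embedding used is into $L_1$, not Hilbert space, and it is applied not to all of $H$ but only to \emph{small induced subgraphs}: property~$\mathcal{R}(\varepsilon)$ (Definition~\ref{def-R(e)}) says that any subset $S\subseteq V_H$ with $|S|\leqslant m^{1-\varepsilon}$ embeds into $L_1$ with distortion $O(1/\varepsilon)$, a consequence of the local tree-likeness of random regular graphs. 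Combined with the $L_1$-Poincar\'e inequality for the expander $G$ (Fact~\ref{fact-L1}), this handles every $f$ whose image is small.

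The genuine new idea you are missing is how to handle $f$ with large image. The paper decomposes the \emph{domain} $V_G$ according to fiber sizes $|f^{-1}(f(v))|$ (Definition~\ref{def-deco}). When the ``nearly constant'' part $M_2(f,\varepsilon)$ is small, a direct union bound over all $f$ works (Proposition~\ref{small-M2}), using that most edges of $G$ land in the ``nearly injective'' region and must be stretched to distance $\Omega(\varepsilon\log m)$ in $H$. When $M_2(f,\varepsilon)$ is large, the paper introduces a \emph{random compression} $\boldsymbol{f}$ of $f$ (Definition~\ref{def:compression}) that collapses the troublesome part of the domain to a single point, forcing $|\mathrm{Im}(\boldsymbol{f})|\leqslant m^{1-\varepsilon}$ so that the $L_1$ argument applies; the strong vertex-expansion property~$\mathcal{D}(\alpha)$ of $G$ is then used (Lemma~\ref{l5.8}) to show that this compression does not distort either side of the Poincar\'e inequality by more than a constant. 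None of this structure---domain decomposition by fiber size, $L_1$ embedding of sublinear subsets, random compression---appears in your outline.
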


\begin{remark} \label{rem-new}
A principle reason for the significant and enduring interest in Kleinberg's problem is that it represents a basic meta-problem of the field: the absence of probabilistic and combinatorial tools for reasoning about nonlinear spectral gaps. See, e.g., \cite{MN15,ADTT24,EMN25} for discussion of this perspective. Our proof is combinatorial, directly addressing this gap. The applications in this paper can also be contextualized as progress in this direction.
\end{remark}

Letting $\mathcal{M}=(M,\varrho_{\mathcal{M}})$ and $\mathcal{N}=(N,\varrho_{\mathcal{N}})$ be metric spaces, recall that the \emph{$($bi-Lipschitz$)$ distortion of $\mathcal{M}$ into $\mathcal{N}$}, denoted by $c_{\mathcal{N}}(\mathcal{M})$, is defined as the smallest constant $D>0$ for which there~exists a map\footnote{Note that this map $e$ is necessarily injective.} $e\colon M\to N$ and (a scaling factor) $s>0$ such that $s \varrho_{\mathcal{M}}(i,j)\leqslant \varrho_{\mathcal{N}}\big(e(i),e(j)\big)\leqslant s D \varrho_{\mathcal{M}}(i,j)$ for all $i,j\in M$. Theorem \ref{thm-kleinberg} together with a standard application of~\eqref{eq-poincare}---reproduced in Subsection~\ref{subsec6.4} for the reader's convenience---yield that independent random regular graphs, viewed as metric spaces, have mutually incompatible geometries.

\begin{corollary}[Bi-Lipschitz distortion between random regular graphs] \label{cor-bi-Lip}
Let $d, \Delta\geqslant 3$ be integers. Then, for any pair of $m\geqslant n$ of positive integers,
\begin{equation} \label{bi-Lip-e1}
\mathbb{P}_{G(n,d)}\times \mathbb{P}_{G(m,\Delta)} \Big[(G,H)\colon c_H(G)\gtrsim_d \log n \Big] \geqslant 1-O_{d,\Delta}\Big(\frac{1}{n^\tau}\Big),
\end{equation}
where $\tau=\tau(d,\Delta)>0$ is as in Theorem \ref{thm-kleinberg}, and $c_H(G)$ denotes the bi-Lipschitz distortion of $(V_G,\dist_G)$ into $(V_H,\dist_H)$.
\end{corollary}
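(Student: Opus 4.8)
\emph{Proof proposal.} The plan is to run the textbook argument that turns a Poincar\'e inequality into a distortion lower bound, feeding in Theorem~\ref{thm-kleinberg} as the only nontrivial input. Fix a realization of $(G,H)$ and suppose $e\colon V_G\to V_H$ is \emph{any} embedding witnessing distortion $D=c_H(G)$ with scaling factor $s>0$, so that $s\,\dist_G(v,u)\leqslant \dist_H(e(v),e(u))\leqslant sD\,\dist_G(v,u)$ for all $v,u\in V_G$. First I would apply the defining inequality \eqref{eq-poincare} with the metric space $\mathcal{M}=(V_H,\dist_H)$, exponent $p=2$, and the function $f=e$. On the left-hand side, the lower bound $\dist_H(e(v),e(u))^2\geqslant s^2\dist_G(v,u)^2$ gives
\[
\frac{s^2}{|V_G|^2}\sum_{v,u\in V_G}\dist_G(v,u)^2 \;\leqslant\; \frac{1}{|V_G|^2}\sum_{v,u\in V_G}\dist_H(e(v),e(u))^2.
\]
On the right-hand side, every edge $\{v,u\}\in E_G$ has $\dist_G(v,u)=1$, hence $\dist_H(e(v),e(u))^2\leqslant s^2D^2$, and so $\frac{\gamma(G,\dist_H^2)}{|E_G|}\sum_{\{v,u\}\in E_G}\dist_H(e(v),e(u))^2 \leqslant \gamma(G,\dist_H^2)\, s^2 D^2$. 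Cancelling $s^2$ yields the clean inequality
\[
D^2 \;\geqslant\; \frac{1}{\gamma(G,\dist_H^2)}\cdot \frac{1}{|V_G|^2}\sum_{v,u\in V_G}\dist_G(v,u)^2 .
\]

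Next I would lower bound the average squared distance in $G$ by an elementary volume-growth estimate, valid deterministically for every $d$-regular graph on $n$ vertices: a ball of radius $r$ around any vertex contains at most $1+d\frac{(d-1)^r-1}{d-2}\leqslant (d-1)^{r+1}$ vertices, so for $r=r_d(n):=\lfloor \log_{d-1}(n/2)\rfloor-1$ at least $n/2$ vertices lie at distance strictly more than $r$ from any fixed vertex. Consequently $\frac{1}{|V_G|^2}\sum_{v,u}\dist_G(v,u)^2\geqslant \tfrac12 r_d(n)^2 = \Omega_d(\log^2 n)$. (For the finitely many small $n$ where this bound is vacuous one uses $c_H(G)\geqslant 1$ and shrinks the implicit constant.) Finally I would invoke Theorem~\ref{thm-kleinberg} with $p=2$: on an event of probability at least $1-O_{d,\Delta}(\min\{n,m\}^{-\tau})$ we have $\gamma(G,\dist_H^2)\leqslant \Gamma(d,2)=\exp(10^{12}\cdot 4\cdot\log^2 d)=O_d(1)$. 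Combining the three displays on this event gives $D^2=\Omega_d(\log^2 n)$, i.e.\ $c_H(G)=\Omega_d(\log n)$; since $m\geqslant n$ the failure probability is $O_{d,\Delta}(n^{-\tau})$, which is exactly \eqref{bi-Lip-e1}.

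I do not expect a genuine obstacle here: modulo Theorem~\ref{thm-kleinberg}, the argument is entirely routine. The only points requiring a little care are (i) bookkeeping the dependence of all constants on $d$ (and confirming they do not depend on $\Delta$, which is automatic because $\gamma(G,\dist_H^2)$ is bounded independently of $\Delta$), and (ii) handling the trivial small-$n$ regime so that the $\Omega_d(\cdot)$ notation is literally correct. I would present this computation in Subsection~\ref{subsec6.4} as indicated, keeping it to a few lines.
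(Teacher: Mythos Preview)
Your proposal is correct and follows essentially the same argument as the paper's proof in Subsection~\ref{subsec6.4}: apply the Poincar\'e inequality from Theorem~\ref{thm-kleinberg} to a bi-Lipschitz embedding, cancel the scaling factor, and lower-bound the average (power of) $G$-distance via elementary volume growth. The only differences are cosmetic---the paper runs the computation with $p=1$ rather than $p=2$, and your explicit ball estimate $|B(v,r)|\leqslant(d-1)^{r+1}$ is slightly off for $d=3$ (e.g.\ $r=2$ gives $10>8$), though this does not affect the claimed $\Omega_d(\log^2 n)$ lower bound.
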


\subsubsection{\!} \label{subsubsec1.2.2}

Our second main result concerns sequences $(G_n)$ of regular graphs with degree $d\geqslant 3$ and $|V_{G_n}|\to\infty$, that satisfy, with high probability, a nonlinear Poincar\'{e} inequality with respect to a uniformly random regular graph $H$ with a Poincar\'{e} constant $\gamma(G_n,\dist_H^2)$ bounded independently of $n$ and the size of the vertex set and the degree of $H$.

The existence of a single sequence $(G_n)$ of $3$-regular graphs with this property is a celebrated result of Mendel and Naor \cite[Theorem~1.2]{MN15} that utilizes the nonlinear spectral calculus developed in~\cite{MN13,MN14}. In this work, Mendel and Naor mention the lack of tools for studying nonlinear spectral gaps of random graphs, and they allude to the natural question of whether their result can be achieved by a randomized construction. The following theorem affirms that this is the case; random regular graphs of any fixed degree $d\geqslant 3$ provide the desired alternative construction.

\begin{theorem}[Random version of the Mendel--Naor construction] \label{mn-random}
Let $d\geqslant 3$ be an integer, and for every $p\geqslant 1$, let\, $\Gamma(d,p)$ be as in \eqref{klein-e1}. There exist positive constants $C(d)$ and $\tau' = \tau'(d)$ with the following property. For every integer $n$ with $n \geqslant C(d)$ and $dn$ an even integer, there is a nonempty subset $\mathcal{A}_n$ of\, $G(n,d)$ with
\begin{equation} \label{mn-random-e1}
\mathbb{P}_{G(n,d)}\big[\mathcal{A}_n\big]\geqslant 1-O_d\Big(\frac{1}{n^{\tau'}}\Big),
\end{equation}
such that if\, $G_n\in\mathcal{A}_n$, then, for any integer~$\Delta\geqslant 3$,
\begin{equation} \label{mn-random-e2}
\mathbb{P}_{G(m,\Delta)} \Bigg[ H\colon \sup_{\substack{n\geqslant C(d) \\ dn \text{ even}}}
\gamma(G_n,\dist_H^p)\leqslant \Gamma(d,p)
\text{ for every } p\geqslant 1 \Bigg] \geqslant 1-O_{\Delta}\Big(\frac{1}{m^{\tau''}}\Big),
\end{equation}
where $\tau'' = \tau''(\Delta)>0$.
\end{theorem}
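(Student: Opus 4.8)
The plan is to deduce Theorem \ref{mn-random} from Theorem \ref{thm-kleinberg} by peeling off the randomness in the source graph while carefully tracking the order of the quantifiers. The key observation is that the argument behind Theorem \ref{thm-kleinberg} is not really a statement about the product measure $\mathbb{P}_{G(n,d)}\times\mathbb{P}_{G(m,\Delta)}$: it isolates a pseudorandomness/expansion property $(\mathrm{P}_G)$ of the source graph, which a $\mathbb{P}_{G(n,d)}$-random graph satisfies with probability at least $1-O_d(n^{-\tau'})$ for some $\tau'=\tau'(d)>0$, together with a structural property $(\mathrm{P}_H)$ of the target graph --- controlling the spectral gap of $H$ and the local geometry of the metric space $(V_H,\dist_H)$, e.g.\ ball growth, the number of short cycles, and the expansion of small vertex subsets in $H$ --- which a $\mathbb{P}_{G(m,\Delta)}$-random graph satisfies with probability at least $1-O_\Delta(m^{-\tau''})$ for some $\tau''=\tau''(\Delta)>0$; and it establishes that whenever $G$ satisfies $(\mathrm{P}_G)$ and $H$ satisfies $(\mathrm{P}_H)$, one has $\gamma(G,\dist_H^p)\leqslant\Gamma(d,p)$ for every $p\geqslant 1$. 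Crucially this implication is uniform: it imposes no relation between $|V_G|$ and $|V_H|$. (This is also why the failure probability in \eqref{klein-e2} is $O_{d,\Delta}(\min\{n,m\}^{-\tau})$: the bad event lies in the union of the two independent bad events, whose probabilities sum to $O_{d,\Delta}(\min\{n,m\}^{-\tau})$ with $\tau=\min\{\tau',\tau''\}$.)

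Granting this modular form of the proof of Theorem \ref{thm-kleinberg}, the rest is bookkeeping. Let $C(d)$ be a sufficiently large constant and call an integer $n$ \emph{admissible} if $n\geqslant C(d)$ and $dn$ is even, so that in particular $G(n,d)\neq\varnothing$. Define $\mathcal{A}_n$ to be the set of $G\in G(n,d)$ satisfying $(\mathrm{P}_G)$; it depends only on $n$ and $d$, it is nonempty for admissible $n$, and $\mathbb{P}_{G(n,d)}[\mathcal{A}_n]\geqslant 1-O_d(n^{-\tau'})$, which gives \eqref{mn-random-e1}. Now fix any family $(G_n)$ with $G_n\in\mathcal{A}_n$ for every admissible $n$, and fix an integer $\Delta\geqslant 3$ and an integer $m$ with $\Delta m$ even. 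For $H\sim\mathbb{P}_{G(m,\Delta)}$ let $\Event$ be the event that $H$ satisfies $(\mathrm{P}_H)$; then $\mathbb{P}_{G(m,\Delta)}[\Event]\geqslant 1-O_\Delta(m^{-\tau''})$. On the event $\Event$, for every admissible $n$ the pair $(G_n,H)$ satisfies the hypotheses of the uniform implication --- $G_n$ satisfies $(\mathrm{P}_G)$ because $G_n\in\mathcal{A}_n$, and $H$ satisfies $(\mathrm{P}_H)$ --- so $\gamma(G_n,\dist_H^p)\leqslant\Gamma(d,p)$ for every $p\geqslant 1$. Since the hypothesis placed on $H$ is the single property $(\mathrm{P}_H)$, which does not refer to $n$ at all, this holds simultaneously for all admissible $n$; taking the supremum over $n$ yields $\sup_n\gamma(G_n,\dist_H^p)\leqslant\Gamma(d,p)$ for every $p\geqslant 1$, which is exactly the event in \eqref{mn-random-e2}. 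This proves the theorem.

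The only genuinely non-formal ingredient is the extraction of this clean ``product'' structure from the proof of Theorem \ref{thm-kleinberg}, and I expect the main point to verify there is that the condition imposed on the target $H$ is truly a property of $H$ in isolation, with no concealed dependence on $n$ or on the source graph. Concretely, $(\mathrm{P}_H)$ must be phrased in terms of absolute features of $H$ --- its second adjacency eigenvalue, the growth of balls and the number of short cycles in $H$, uniform control over all sufficiently small vertex subsets (where ``small'' means of size below an absolute fraction of $|V_H|$, not of size at most $|V_G|$) --- and likewise the quantity that ultimately bounds $\gamma(G,\dist_H^p)$ should reflect only a scale-invariant feature of $(V_H,\dist_H)$, such as a metric-type or Markov-type constant, rather than its diameter: for a fixed target the naive bound degrades like $(\diam H)^p$, which would ruin uniformity in both $m$ and $n$. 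A second point, already asserted within Theorem \ref{thm-kleinberg} but essential here, is that $\Gamma(d,p)$ does not depend on $\Delta$ or $m$, so that a single threshold governs all targets at once and the supremum over $n$ can be absorbed. Once the argument behind Theorem \ref{thm-kleinberg} is organized so that $(\mathrm{P}_G)$ and $(\mathrm{P}_H)$ are cleanly separated --- which is the natural way to organize it --- Theorem \ref{mn-random} follows simply by not averaging out the randomness of $G$.
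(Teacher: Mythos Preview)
Your proposal hinges on the claim that the proof of Theorem \ref{thm-kleinberg} factors cleanly as a deterministic implication ``$(\mathrm{P}_G)$ and $(\mathrm{P}_H)$ $\Rightarrow$ $\gamma(G,\dist_H^p)\leqslant\Gamma(d,p)$'', where $(\mathrm{P}_G)$ is a property of $G$ alone. This is false for the paper's argument, and it is precisely the obstacle Theorem \ref{mn-random} has to overcome. The case split is on the function $f$: when $|M_2(f,\varepsilon)|\geqslant n/8$ (Proposition \ref{large-M2}) there is indeed a deterministic implication from $\mathcal{D}(\alpha)$ on $G$ and $\mathcal{R}(\varepsilon)$ on $H$. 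But when $|M_2(f,\varepsilon)|\leqslant n/8$ (Proposition \ref{small-M2}) the argument is a union bound over all $m^n$ maps $f\colon[n]\to[m]$, and for each such $f$ one shows that a random $G$ avoids a bad set of pairs determined by $\dist_H$. The resulting good event for $G$ genuinely depends on $H$; there is no source-side pseudorandomness hypothesis in the paper that absorbs it. So your $\mathcal{A}_n$, defined as $\{G:(\mathrm{P}_G)\}$, does not do the job.

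What the paper actually does (Lemma \ref{lem6.2}) is to exploit the quantitative strength of Proposition \ref{small-M2}: the joint failure probability in Corollary \ref{cor6.1} is $m^{-0.04n}$, not merely polynomial in $\min\{n,m\}$. A Fubini/Markov step turns this into, for each fixed $(m,\Delta)$, a set $\mathcal{A}_n^{m,\Delta}\subseteq\mathcal{D}_n$ with $\mathbb{P}_n[\mathcal{A}_n^{m,\Delta}]\geqslant 1-m^{-0.02n}$ on which the Poincar\'e bound holds for $\mathbb{P}_{m,\Delta}$-most $H$. One then sets $\mathcal{A}_n:=\bigcap_{m,\Delta}\mathcal{A}_n^{m,\Delta}$; the double sum $\sum_{m\geqslant m_7}\sum_{\Delta\leqslant m} m^{-0.02n}$ converges (uniformly in $n$ large), giving \eqref{mn-random-e1}. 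Finally, for fixed $H$, one still needs to union bound over all admissible $n$ to get the supremum inside \eqref{mn-random-e2}; this uses that $\sum_{n\geqslant n_1} m^{-0.02n}=O(1/m)$, again relying on the exponent being linear in $n$. In short, the missing idea in your write-up is not bookkeeping but the recognition that the good set of source graphs is $H$-dependent, and that one must intersect over all targets and then re-sum --- which only works because the error in Proposition \ref{small-M2} is $m^{-\Theta(n)}$ rather than $m^{-\Theta(1)}$ or $n^{-\Theta(1)}$.
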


\begin{remark} \label{rem1}
As mentioned above, the existence of a sequence $(G_n)$ of $3$-regular graphs that satisfies~\eqref{mn-random-e2} for $p>1$ is due to Mendel and Naor \cite{MN15}. However, let us highlight that the case $p=1$ of Theorem \ref{mn-random} is a new result.  As noted in \cite[Remark 2.3]{MN15}, the type of martingale arguments used in \cite{MN13,MN14} typically fail for $p=1$. We bypass these analytical barriers by employing and further developing some of the combinatorial ideas introduced in~\cite{ADTT24}.

We also note that, in an independent work, Eskenazis, Mendel and Naor \cite[Theorem 1.20]{EMN25} have succeeded in extrapolating the exponent in the original Mendel--Naor construction to cover the case of all~$p\geqslant 1$. Their argument uses the analytical tools developed in \cite{MN13,MN14,MN15} and, in the critical regime $p\in [1,2)$, they rely on geometric ideas---in particular, they utilize embeddings of random regular graphs into Euclidean cones over certain, well-behaved, metric spaces.
\end{remark}

\begin{remark} \label{rem2}
The core of the proofs of Theorems \ref{thm-kleinberg} and \ref{mn-random} consists in showing the case $p=1$. The case $p>1$ follows from the case $p=1$ and a version of Matou\v{s}ek's extrapolation argument for metric spaces taken from \cite{ADTT25}.
\end{remark}

\begin{remark} \label{rem3}
The constant $C(d)$ from Theorem \ref{mn-random} can be explicitly estimated, though obtaining a strong upper bound on $C(d)$ in terms of the degree $d$ would likely require revisiting some of the arguments in \cite{ADTT24}.
\end{remark}

\begin{remark} \label{rem4}
The bound $\Gamma(d,p)$ of the Poincar\'{e} constant in Theorems \ref{thm-kleinberg} and \ref{mn-random} is neither optimal, nor even optimized with respect to the current proof. Given the importance of this invariant, especially in algorithmic applications, it would be valuable to know the correct dependence of $\Gamma(d,p)$ in terms of the degree $d$ and the exponent $p$.
\end{remark}

\begin{problem}[Optimality of the Poincar\'{e} constant]
Determine the optimal dependence on $d,p$ of the bound $\Gamma(d,p)$ of the Poincar\'{e} constant. Can a significant numerical improvement be made to our result for small degrees---e.g., for $d=3$---and exponent $p=1$?
\end{problem}

\subsection{Application to universal approximators}

Given a metric space $\mathcal{M}=(M,\varrho)$, a constant $D>0$ and an exponent $p\geqslant 1$, we say, following \cite{BGS07}, that a multi-graph\footnote{We allow multi-graphs to have self-loops.} $U$ on $[k]$ is a \emph{$D\text{-universal}$ approximator with respect to $\varrho^p$} if there exists (a scaling factor) $s>0$ such that for every $x_1,\dots,x_k\in M$,
\[ \frac{1}{k^2} \sum_{i,j=1}^k \varrho(x_i,x_j)^p \leqslant \frac{s}{|E_U|}
\sum_{\{i,j\}\in E_U} \varrho(x_i,x_j)^p \leqslant
\frac{D}{k^2} \sum_{i,j=1}^k \varrho(x_i,x_j)^p. \]
Universal approximators for general metric spaces, as well as for specific subclasses of metric spaces, have been actively studied in the theoretical computer science literature and allied areas; we refer the reader to \cite[Section 2]{MN15} for a general overview of this line of research.

The first examples of sequences of multi-graphs $(U_k)$ with $V_{U_k}=[k]$ and $|E_{U_k}|=O(k)$ that are $O(1)\text{-universal}$ approximators with respect to random regular graphs and exponent $p>1$ have been constructed by Mendel and Naor \cite[Theorem 2.1]{MN15}. The case of $p = 1$ is the original setting considered for algorithmic applications \cite{In99}, and it was left as a central open problem in~\cite{MN15}.

As a consequence of the main results of our paper, we may construct universal approximators with respect to random regular graphs for any exponent $p\geqslant 1$, thereby also covering the endpoint case $p=1$.

\begin{theorem} \label{universal-approximator}
For every $p\geqslant 1$, let\, $\Gamma(p)=\Gamma(3,p)$ be as in \eqref{klein-e1}. Then, for every positive integer~$k$, there exists a multi-graph $U_k$ on $[k]$ with $|E_{U_k}|=O(k)$ such that, for any integer $\Delta\geqslant 3$,
\begin{align} \label{universal-e1}
\mathbb{P}_{G(m,\Delta)} \Big[H\colon \text{for } & \text{every $p\geqslant 1$ and every
integer $k\geqslant 1$, the multi-graph } U_k  \\
& \text{is a $\big(2^p\Gamma(p)\big)$-universal approximator of\, } \dist_H^p\Big]
\geqslant 1-O_{\Delta}\Big(\frac{1}{m^{\tau''}}\Big), \nonumber
\end{align}
where $\tau''=\tau''(\Delta)>0$ is as in Theorem \ref{mn-random}.
\end{theorem}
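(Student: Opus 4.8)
The plan is to derive Theorem~\ref{universal-approximator} from Theorem~\ref{mn-random} (applied with $d=3$) together with the elementary observation that every regular multi-graph automatically satisfies the right-hand side of the universal-approximator inequality with constant $2^p$; no probabilistic input is needed beyond \eqref{mn-random-e2}.

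\textbf{Construction of the graphs $U_k$.} First I would fix, for every even integer $n\geqslant C(3)$, a graph $G_n\in\mathcal{A}_n$, which is possible since $\mathcal{A}_n\neq\emptyset$. For an integer $k$ with $2k\geqslant C(3)$, fix a surjection $\pi\colon[2k]\to[k]$ with $|\pi^{-1}(u)|=2$ for every $u$, and let $U_k$ be the multi-graph on $[k]$ whose edge multiset is the pushforward $\{\{\pi(a),\pi(b)\}:\{a,b\}\in E_{G_{2k}}\}$ (loops and repeated edges allowed). Then $U_k$ is $6$-regular and $|E_{U_k}|=|E_{G_{2k}}|=3k=O(k)$. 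For the finitely many remaining $k$ (those with $2k<C(3)$), take $U_k=K_k$: this has $O(k)$ edges and, as a direct computation shows, satisfies the two-sided inequality with $s=1$ and $D=2$, hence a fortiori with $D=2^p\Gamma(p)$.

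\textbf{The two inequalities.} For the left-hand (Poincar\'e) inequality I would, given $x_1,\dots,x_k\in V_H$, pull back to $f:=x_{\pi(\cdot)}\colon[2k]\to V_H$. Since $|\pi^{-1}(u)|=2$ for all $u$, the all-pairs average of $\dist_H(f(\cdot),f(\cdot))^p$ over $[2k]$ equals the all-pairs average of $\dist_H$ over $[k]$, while the $G_{2k}$-edge average of $f$ equals the $U_k$-edge average of $x$ (loops contribute $0$); feeding $f$ into \eqref{eq-poincare} for $G_{2k}$ then yields $\gamma(U_k,\dist_H^p)\leqslant\gamma(G_{2k},\dist_H^p)$. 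For the right-hand inequality I would use that for any edge $\{u,v\}$ of $U_k$ and any $\ell\in[k]$, the triangle inequality and convexity of $t\mapsto t^p$ give $\dist_H(x_u,x_v)^p\leqslant 2^{p-1}\big(\dist_H(x_u,x_\ell)^p+\dist_H(x_\ell,x_v)^p\big)$; averaging over $\ell$, summing over the edges, and using $\sum_{\{u,v\}\in E_{U_k}}(h(u)+h(v))=\sum_w\deg_{U_k}(w)\,h(w)=6\sum_w h(w)$ together with $|E_{U_k}|=3k$ gives exactly $\tfrac{1}{|E_{U_k}|}\sum_{\{u,v\}\in E_{U_k}}\dist_H(x_u,x_v)^p\leqslant\tfrac{2^p}{k^2}\sum_{u,v}\dist_H(x_u,x_v)^p$. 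Regularity of $U_k$ is what makes the constant here exactly $2^p$.

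\textbf{Conclusion and main obstacle.} Finally I would apply Theorem~\ref{mn-random} with $d=3$ to the sequence $(G_n)$: on the event of \eqref{mn-random-e2}, which has $\mathbb{P}_{G(m,\Delta)}$-probability at least $1-O_\Delta(m^{-\tau''})$, one has $\gamma(G_{2k},\dist_H^p)\leqslant\Gamma(3,p)=\Gamma(p)$ for \emph{all} admissible $k$ and \emph{all} $p\geqslant 1$ at once. Combining the two inequalities with $s:=\Gamma(p)$ shows that, on this event, each $U_k$ is a $(2^p\Gamma(p))$-universal approximator of $\dist_H^p$ for every $k\geqslant 1$ and every $p\geqslant 1$, which is \eqref{universal-e1} (and, since Theorem~\ref{mn-random} includes $p=1$, this also covers the endpoint left open in \cite{MN15}). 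There is no serious obstacle in this argument; the only point demanding a bit of care is producing a single family $(U_k)_{k\geqslant 1}$ that is regular, has $O(k)$ edges, and exists for \emph{every} $k$ regardless of parity---handled above by the $2$-to-$1$ pushforward and the complete-graph fallback for bounded $k$---after which the rest is bookkeeping.
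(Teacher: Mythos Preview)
Your proposal is correct and follows essentially the same approach as the paper: both apply Theorem~\ref{mn-random} with $d=3$, push forward $G_{2k}$ along a $2$-to-$1$ map to obtain $U_k$, handle small $k$ via the complete graph, and establish the upper estimate via the triangle-inequality-plus-regularity averaging trick (the paper performs this averaging at the level of $G_{2k}$, you at the level of $U_k$, which is equivalent). The only cosmetic point is that your claim ``$K_k$ has $O(k)$ edges'' relies on $k<C(3)/2$ being bounded, which you do note but could state more explicitly.
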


\begin{remark} \label{rem-newnew}
Universal approximators for random regular graphs---that is, the case $p=1$ in Theorem \ref{universal-approximator}---were also constructed by Eskenazis--Mendel--Naor \cite[Theorem~1.26]{EMN25} as a consequence of their extrapolated version of the Mendel--Naor construction \cite[Theorem 1.2]{MN15}. The quantitative aspects of Theorem \ref{universal-approximator} are more effective when compared with \cite[Theorem~1.26]{EMN25}; however, as we have already noted in Remark \ref{rem4}, our estimate for the constant $\Gamma(p)$ is, most likely, not optimal.
\end{remark}

\subsection{Proof overview}

We now briefly describe the key ideas of the proofs of Theorems \ref{thm-kleinberg} and~\ref{mn-random}. Let $G$ and $H$ be two independent random regular graphs on $[n]$ and $[m]$ with degrees $d$ and $\Delta$, respectively. Our goal is to prove that with high probability, for any $f\colon [n] \to [m]$,
\begin{equation} \label{over}
\frac{1}{n^2} \sum_{v,u \in [n]} \dist_H\big(f(v),f(u)\big) \leqslant
O_{d}\bigg( \frac{1}{|E_G|} \sum_{\{v,u\}\in E_G} \dist_H\big(f(v),f(u)\big)\bigg),
\end{equation}
where the implied constant is independent of $n, m, \Delta$, and $f$. As the starting point, let us discuss the extreme situations when $f$ is nearly injective, and when $f$ is nearly constant. Upon considering these cases, three natural mechanisms emerge as candidates for enabling the proof of \eqref{over}.

The first mechanism is spectral expansion. Consider the toy situation in which there are two points (``clusters'') in the image of $f$,  each of which has pre-image of size linear in $n$. As a consequence of spectral expansion of random graphs, with high probability there will be a linear number of edges in $G$ connecting the pre-images of the clusters. This implies \eqref{over}; however, the viability of this strategy rapidly deteriorates as the number of clusters begins to grow.

Second, we can leverage concentration of measure (namely, the fact that edges are nearly equidistributed) for random graphs. Consider the toy example in which $m \asymp |\mathrm{Image}(f)| \leqslant n$. As a deterministic consequence of the $\Delta$-regularity of $H$, for any fixed $x \in V_H$, there are at most order $m^{o(1)}$ vertices $y \in V_H$ with $\dist_H(x,y) = o(\log m)$. Then with very high probability (in particular, high enough to union bound over all possible choices of $f$), the graph $G$ has order $n$ edges between vertices which are mapped to logarithmic distance in $H$, yielding the validity of \eqref{over}. (A similar computation appears in \cite[Proposition 8.1]{MN15} to obtain \eqref{over} in the specific case that $m = n$ and $f$ is a bijection.)

Third, towards bridging the gap between these regimes, we utilize the locally tree-like behaviour of random graphs. A strong form of this property---developed by Arora et al. \cite{ALNRRV12} and Mendel--Naor \cite{MN15}---is that any sublinear-sized neighborhood of a random graph admits a $\text{low-distortion}$ embedding into $L_1$. This will be formalized in Definition \ref{def-R(e)} as \emph{property $\mathcal{R(\varepsilon)}$}, which asserts that the subgraph of $H$ induced by any subset of size at most $m^{1-\varepsilon}$---where $\varepsilon$ is an arbitrarily small constant---admits a bi-Lipschitz embedding into $L_1$. Property $\mathcal{R(\varepsilon)}$ is used in conjunction with a classical result of Matou\v{s}ek asserting that random graphs satisfy a Poincar\'e inequality with respect to $L_1$. Together, these facts eventually yield \eqref{over} for any $f$ with $|\mathrm{Image}(f)|\leqslant m^{1-\varepsilon}$.

In view of these tools, the critical regime for proving \eqref{over} is $m^{1-o(1)} \leqslant |\mathrm{Image}(f)| \leqslant o(m)$, where none of these mechanisms directly apply. We shall highlight three of the key ideas employed in this setting. First, we develop a random compression argument to ``reduce'' the image of $f$, thereby enabling the usage of property $\mathcal{R(\varepsilon)}$. Second, we adopt a ``cluster-level" view of $f$, in which vertices $v \in V_G$ are sorted by $\big|f^{-1}\big(f(v)\big)\big|$. This enables us to decompose the domain of $f$ into regions on which the function $f$ is nearly injective, or nearly constant. Third, our proof heavily utilizes a strong form of vertex expansion for random regular graphs---see Definition \ref{def-D(a)}---introduced in our previous work \cite{ADTT24}. This property is used in Lemma \ref{l5.8} as the key ingredient for understanding the statistical behaviour of the random compression scheme.

\subsection{Explicit constructions}

The aforementioned combinatorial property of regular graphs, presented in Definition \ref{def-D(a)} and referred to as \emph{property $\mathcal{D}(\alpha)$}, encodes spectral expansion as well as a strong form of vertex expansion. In view of the importance of this property to the current article and \cite{ADTT24}, we conclude by reiterating the following basic open problem.

\begin{problem}[\textcolor{black}{\cite[Problem 6.4]{ADTT24}}] \label{problem-explicit}
For all fixed $d\geqslant 3$, construct an explicit and deterministic sequence of $d$-regular graphs with property $\mathcal{D}(\alpha)$, for some $\alpha=\alpha(d)>0$ (see Definition~\ref{def-D(a)}).
\end{problem}

In this direction, important progress was recently made by Hsieh et al. \cite{HLMRZ25}. They constructed an explicit sequence of $d$-regular graphs such that the neighborhood of radius $t$ of any nonempty set $S$ of vertices, has growth rate of order $\big(d-1-o_d(1)\big)^t |S|$, where $o_d(1)$ is an error term that vanishes as the degree $d$ diverges. While this significantly improves over previous constructions, it still falls short from resolving Problem \ref{problem-explicit}.

\section{Background material} \label{sec2}

\subsection{Graph theoretic preliminaries}

All graphs in this paper are simple. If $G$ is any graph, then by $V_G$ we denote its vertex set, and by $E_G$ we denote the set of its edges; moreover, for any subset $S$ of $V_G$, by $G[S]$ we denote the induced on $S$ subgraph of $G$.

\subsubsection{Graph distance}

For any graph $G$, denote by $\dist_G(\cdot,\cdot)$ the shortest-path distance on~$G$ between vertices. By convention, set $\dist_G(v,u):=\infty$ if $v,u\in V_G$ are contained in different connected components of $G$. If $v\in V_G$ and $S\subseteq V_G$ is nonempty, then define the shorthand
\[ \dist_G(v,S):= \min\big\{ \dist_G(v,u)\colon u\in S\big\}. \]
Moreover, for every integer $\ell\geqslant 0$, let $B_G(S,\ell)$ denote the set $\{v\in V_G\colon \dist_G(v,S)\leqslant \ell\}$. (In particular, $B_G(S,0)=S$.) Finally, define the \emph{diameter} $\diam(G)$ of $G$ to be the quantity $\max\{\dist_G(v,u):v,u\in V_G\}$; namely, $\diam(G)$ is the diameter of the metric space~$(G,\dist_G)$.

\subsubsection{Regular graphs}

For any pair $n\geqslant d\geqslant 3$ of positive integers, $G(n,d)$ denotes the set of all $d$-regular graphs on $[n]:=\{1,\dots,n\}$, and $\mathbb{P}_{G(n,d)}$ denotes the uniform probability measure on $G(n,d)$.

\subsection{Asymptotic notation}

If $a_1,\dots,a_k$ are parameters, then we write $O_{a_1,\dots,a_k}(X)$ to denote a quantity that is bounded in magnitude by $X C_{a_1,\dots,a_k}$, where $C_{a_1,\dots,a_k}$ is a positive constant that depends on the parameters $a_1,\dots,a_k$. We also write $Y\lesssim_{a_1,\dots,a_k}\!X$ or $X\gtrsim_{a_1,\dots,a_k}\!Y$ for the estimate $|Y|=O_{a_1,\dots,a_k}(X)$. Finally, we write $Y=\Theta_{a_1,\dots,a_k}(X)$ if $Y=O_{a_1,\dots,a_k}(X)$ and $X=O_{a_1,\dots,a_k}(Y)$.

\subsection{Bi-Lipschitz distortion into $L_1$, and nonlinear spectral gaps}

Recall that for a metric space $\mathcal{M}=(M,\varrho)$, the (bi-Lipschitz) distortion of $\mathcal{M}$ into $L_1$ is denoted by $c_{L_1}(\mathcal{M})$. Since $L_1$ is a vector space, the quantity $c_{L_1}(\mathcal{M})$ can be equivalently defined as the smallest constant $D>0$ for which there exists a map $e\colon M\to L_1$ such that $\varrho(i,j)\leqslant \|e(i)-e(j)\|_{L_1}\leqslant D \varrho(i,j)$ for all $i,j\in M$. We will need the following basic fact relating the metric invariant $c_{L_1}(\mathcal{M})$ with nonlinear spectral gaps.

\begin{fact} \label{fact-L1}
Let $n\geqslant d\geqslant 3$ be integers, and let $G$ be a $d$-regular graph on $[n]$ with $\lambda_2(G)< d$. Also let $\mathcal{M}=(M,\varrho)$ be a finite metric space. Then,
\[ \gamma(G,\varrho) \leqslant c_{L_1}(\mathcal{M})\, \frac{d}{2\big(d-\lambda_2(G)\big)}. \]
\end{fact}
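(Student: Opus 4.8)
The plan is to prove Fact~\ref{fact-L1} by combining the Euclidean Poincar\'e inequality~\eqref{eq-poincare-e1} for $G$ with a bi-Lipschitz embedding of $\mathcal{M}$ into $L_1$ and the classical fact that $L_1$ embeds isometrically into $L_2$ after raising the metric to the power $1/2$ (equivalently, that the $L_1$ metric is of negative type). Concretely, fix an arbitrary map $f\colon V_G\to M$; we must verify~\eqref{eq-poincare} with $p=1$ and $\gamma = c_{L_1}(\mathcal{M})\,\frac{d}{2(d-\lambda_2(G))}$.

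First I would invoke the definition of $c_{L_1}(\mathcal{M})$ recalled just above the statement: since $\mathcal{M}$ is finite there is a map $e\colon M\to L_1$ with $\varrho(i,j)\leqslant \|e(i)-e(j)\|_{L_1}\leqslant c_{L_1}(\mathcal{M})\,\varrho(i,j)$ for all $i,j\in M$. Next I would use the standard embedding of $L_1$ into $(L_2,\|\cdot\|_2)$ as a metric space of negative type: there is a map $\psi\colon L_1\to L_2$ with $\|\psi(x)-\psi(y)\|_2^2 = \|x-y\|_{L_1}$ for all $x,y$ (for finitely many points this follows from the fact that, for each coordinate, $|s-t|$ is a positive-definite kernel's associated squared distance, e.g. via $|s-t| = \int \mathbbm{1}[s\le r< t \text{ or } t\le r<s]\,dr$ and then taking the indicator functions as the $L_2$-representation). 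Composing, set $g := \psi\circ e\circ f\colon V_G\to L_2$, so that $\|g(v)-g(u)\|_2^2 = \|e(f(v))-e(f(u))\|_{L_1}$ for all $v,u\in V_G$.

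Then I would apply~\eqref{eq-poincare-e1} to $g$ (legitimate since $\lambda_2(G)<d$ guarantees the constant there is finite, and it equals $\frac{d}{2(d-\lambda_2(G))}$), which gives
\[
\frac{1}{|V_G|^2}\sum_{v,u\in V_G}\big\|e(f(v))-e(f(u))\big\|_{L_1}
\leqslant \frac{d}{2(d-\lambda_2(G))}\cdot\frac{1}{|E_G|}\sum_{\{v,u\}\in E_G}\big\|e(f(v))-e(f(u))\big\|_{L_1}.
\]
Finally I would sandwich: bound the left-hand sum from below using $\varrho(f(v),f(u))\leqslant \|e(f(v))-e(f(u))\|_{L_1}$, and bound the right-hand sum from above using $\|e(f(v))-e(f(u))\|_{L_1}\leqslant c_{L_1}(\mathcal{M})\,\varrho(f(v),f(u))$. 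This yields~\eqref{eq-poincare} with the claimed $\gamma$, and since $f$ was arbitrary we conclude $\gamma(G,\varrho)\leqslant c_{L_1}(\mathcal{M})\,\frac{d}{2(d-\lambda_2(G))}$.

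The only point requiring care---and what I would regard as the ``main obstacle,'' though it is classical---is the passage from $L_1$ to squared $L_2$ distance: one must make sure the embedding $\psi$ exists in the generality needed (finitely many points of $L_1$, possibly infinite-dimensional), which is standard since any finite subset of $L_1$ spans a finite-dimensional subspace and $\ell_1^N$ embeds into $L_2$ with $\|\cdot\|_{L_1}=\|\psi(\cdot)-\psi(\cdot)\|_2^2$. Everything else is a routine chaining of inequalities, so I would keep the write-up short, citing the $L_1\hookrightarrow L_2$ fact rather than reproving it.
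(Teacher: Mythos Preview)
Your proof is correct and follows essentially the same route as the paper's: both reduce to the inequality $\gamma(G,\|\cdot\|_{L_1})\leqslant \gamma(G,\|\cdot\|_{L_2}^2)=\frac{d}{2(d-\lambda_2(G))}$ and then sandwich via the bi-Lipschitz embedding $e\colon M\to L_1$. The only difference is cosmetic---the paper cites this inequality as Matou\v{s}ek's extrapolation (specifically \cite[Lemma~5.5]{BLMN05}), whereas you unpack its proof for $p=1$ by invoking the negative-type embedding $\psi\colon L_1\to L_2$ with $\|\psi(x)-\psi(y)\|_2^2=\|x-y\|_{L_1}$, which is precisely the content of that lemma in this case.
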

\begin{proof}
Since $\gamma(G,\|\cdot\|_{L_2}^2)=\frac{d}{2(d-\lambda_2(G))}$, Matou\v{s}ek's extrapolation argument \cite{Ma97} (see, also, \cite[Lemma 5.5]{BLMN05}) yields $\gamma(G,\|\cdot\|_{L_1})\leqslant \frac{d}{2(d-\lambda_2(G))}$. The result follows after invoking the definition of $c_{L_1}(\mathcal{M})$.
\end{proof}

\subsection{The Cheeger constant of regular graphs}

Let $d\geqslant 3$ be an integer, let $G$ be a $d$-regular graph, and recall that the \emph{Cheeger constant} of $G$ is defined by
\[ h(G):= \min_{\substack{\emptyset \neq S\subseteq V_G\\ |S|\leqslant \frac{|V_G|}{2}}}
\frac{\big|\big\{ \{v,u\}\in E_G:\; v\in S \text{ and } u\in S^{\complement}\big\}\big|}{|S|}. \]
We will need the well-known fact that the Cheeger constant of $G$ is related to the spectral properties of the adjacency matrix of $G$. Specifically, (see, e.g., \cite[Theorem 4.1]{HLW06}),
\begin{equation} \label{e-cheeger-e1}
\frac{d-\lambda_2(G)}{2} \leqslant h(G) \leqslant \sqrt{2d\big(d-\lambda_2(G)\big)}.
\end{equation}

\subsection{Extrapolation for metric spaces}

We will need a (one-sided) version of Matou\v{s}ek's extrapolation \cite{Ma97} that is applicable to general metric spaces. More precisely, we have the following theorem; see \cite{ADTT25} for a proof.

\begin{theorem} \label{extrapolation}
Let $d\geqslant 3$ be an integer, and let $G$ be a $d$-regular graph with $h(G)>0$. Also let $\mathcal{M}=(M,\varrho)$ be an arbitrary metric space. Then, for any $1\leqslant p \leqslant q <\infty$, we have
\begin{equation} \label{extra-e1}
\gamma(G,\varrho^q) \leqslant \max\bigg\{ \exp\Big( 12\cdot 2^q  \frac{d}{h(G)}  \log d\Big), \, 5^q \, 2^{\frac{q}{p}} \, \gamma(G,\varrho^p)^{\frac{q}{p}}\bigg\}.
\end{equation}
\end{theorem}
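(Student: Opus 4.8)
The plan is to fix an arbitrary map $f\colon V_G\to M$ and bound the quantity $Q(f):=\big(|V_G|^{-2}\sum_{v,u}\varrho(f(v),f(u))^q\big)\big/\big(|E_G|^{-1}\sum_{\{v,u\}\in E_G}\varrho(f(v),f(u))^q\big)$ by the right-hand side of \eqref{extra-e1}; taking the supremum over $f$ then gives the theorem. We may assume $p<q$, since for $p=q$ the claimed bound reads $\gamma(G,\varrho^q)\le\max\{\exp(12\cdot 2^q\tfrac{d}{h(G)}\log d),\,5^q\cdot 2\,\gamma(G,\varrho^q)\}$, which is trivial. We may also assume $M$ is finite (only the distances on $\mathrm{Image}(f)$ enter), and, rescaling $\varrho$, that $|E_G|^{-1}\sum_{\{v,u\}\in E_G}\varrho(f(v),f(u))^q=1$; the goal is then to show $|V_G|^{-2}\sum_{v,u}\varrho(f(v),f(u))^q\le\Gamma$ with $\Gamma$ as in \eqref{extra-e1}.

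First I would perform a dyadic decomposition in the value of $\varrho\circ f$: since $t^q\le 2^q\sum_{j\in\mathbb{Z}}2^{qj}\mathbf{1}[t\ge 2^j]$ for every $t\ge 0$, it suffices to bound $\sum_{j\in\mathbb{Z}}2^{qj}P_j$, where $P_j:=|V_G|^{-2}\,|\{(v,u):\varrho(f(v),f(u))\ge 2^j\}|$. For the small scales I would estimate $P_j$ using the $p$-Poincar\'e inequality after a scale-localization of $f$: from the pointwise bound $\mathbf{1}[t\ge 2^j]\le(2^{-j}\min(t,2^j))^p$, the fact that truncation of a metric does not increase its nonlinear spectral gap, i.e. $\gamma(G,\min(\varrho,2^j)^p)\le\gamma(G,\varrho^p)$ (used as a lemma; alternatively one localizes $f$ via a padded random partition at scale $2^j$), applied to $f$ with the truncated metric $\min(\varrho,2^j)$, together with $\min(t,2^j)\le t$ on the edge side and the normalization (via H\"older, $|E_G|^{-1}\sum_{\text{edges}}\varrho^p\le 1$), one gets $P_j\le\gamma(G,\varrho^p)\,2^{-jp}$. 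Combined with the trivial $P_j\le 1$, splitting the series at the scale $t_0:=\gamma(G,\varrho^p)^{1/p}$ and summing the two geometric series yields a contribution of at most $5^q\,2^{q/p}\,\gamma(G,\varrho^p)^{q/p}$ (the exponent $q/p$ is exactly $\log_2 t_0^q$, and the $2^q$ of the dyadic step and the remaining numerical constants are comfortably absorbed).

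The delicate step is the contribution of the large scales $2^j>t_0$. There the small-scale estimate gives $P_j<1$, but since $q>p$ the weighted series $\sum_{2^j>t_0}2^{qj}P_j$ would still diverge on the bound $P_j\le\gamma(G,\varrho^p)2^{-jp}$ alone; it converges only because $P_j$ vanishes once $2^j$ exceeds the diameter of $\mathrm{Image}(f)$, and the point is to control how far that diameter can exceed $t_0$ — the naive shortest-path bound being far too weak. Here the Cheeger inequality \eqref{e-cheeger-e1} enters. I would show that a pair-mass $P_j$ at scale $2^j$ forces a proportionate amount of \emph{edge} $q$-energy at comparable scales: applying the coarea/level-set method to the $\varrho$-Lipschitz functions $v\mapsto\min(\varrho(f(v),f(w)),2^j)$ for base points $w$ chosen (by an averaging argument) so that the push-forward of $f$ is not concentrated near $f(w)$, and using that every level set of such a function has edge-boundary at least $h(G)$ times its smaller side, one lower-bounds $|E_G|^{-1}\sum_{\text{edges}}\varrho^q$ in terms of $(h(G)/d)^{O(1)}\,P_j\,(2^j)^q$. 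Iterating this across a bounded hierarchy of cuts of $G$ — at most $O(\log d)$ of them at each level, since the relevant partitions of the fibres of $f$ are multi-way rather than bipartite, each refinement costing a factor governed by $d/h(G)$ — forces $\log_2(2^j/t_0)$ to be $O(2^q\tfrac{d}{h(G)}\log d)$ on the range where $P_j$ is non-negligible, and bounds the whole large-scale contribution by $\exp(12\cdot 2^q\tfrac{d}{h(G)}\log d)$. Taking the maximum of the two contributions finishes the proof.

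The main obstacle is exactly this large-scale analysis. The worst case is a map $f$ whose image-distances are spread over many dyadic scales separated by very thin boundary layers, so that at each scale the edge-energy witnessing a given pair-mass is only as large as the Cheeger constant forces, and these multiplicative deficits compound over the scales; turning this into a quantitative bound requires (i) the truncation-invariance (or padded-partition substitute) feeding the small-scale step, which is already not completely routine for general metric targets, and (ii) a careful iteration of the Cheeger inequality over a multiscale partition, which is responsible for the (non-optimal) exponential dependence on $q$ and on $d/h(G)$ and for the $\log d$ factor coming from multi-way cuts.
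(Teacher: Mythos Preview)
The paper does not contain a proof of Theorem~\ref{extrapolation}: it is quoted as a black box from the companion manuscript \cite{ADTT25} (``see \cite{ADTT25} for a proof''), so there is no in-paper argument against which to compare your proposal.

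On your proposal itself: the dyadic layer-cake reduction and the small-scale bound $P_j\le\gamma(G,\varrho^p)\,2^{-jp}$ are fine, and in fact the truncation step is superfluous there --- the cruder pointwise inequality $\mathbf{1}[t\ge 2^j]\le(t/2^j)^p$ already yields $P_j\le 2^{-jp}\,\tfrac{1}{n^2}\sum\varrho^p\le 2^{-jp}\,\gamma(G,\varrho^p)\,E_p^p\le 2^{-jp}\,\gamma(G,\varrho^p)$ after one application of the $p$-Poincar\'e inequality and H\"older on the edge side. The truncation lemma $\gamma(G,\min(\varrho,R)^p)\le\gamma(G,\varrho^p)$ you invoke is not obvious for arbitrary metrics and you give no argument for it; the padded-partition alternative presupposes structure on $(M,\varrho)$ that is not assumed.

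The genuine gap is the large-scale step, which is where all the content lies. Your intermediate claim --- that pair-mass $P_j$ at scale $2^j$ forces $E_q^q\gtrsim (h(G)/d)^{O(1)}\,P_j\,2^{qj}$ with a $q$-independent exponent --- is false. For a three-point path target $\varrho(a,b)=\varrho(b,c)=1$, $\varrho(a,c)=2$, and a $d$-regular $G$ built as three internally well-connected blocks joined in a path (so $h(G)\asymp 1$), one has $E_q^q\asymp 1/d$ while $P_j\,2^{qj}\asymp 2^q$ at the top scale; passing to a $k$-point path target makes the discrepancy $k^{\,q-O(1)}$. Even if you allow the exponent to depend on $q$, the inequality only bounds each individual summand $2^{qj}P_j$, and you still owe a bound on the number of contributing scales that is independent of $n$ and of $\varrho$; nothing in the coarea/Cheeger sketch supplies this. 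The passage from ``$P_j$ non-negligible forces edge energy at scale $2^j$'' to ``hence $\log_2(2^j/t_0)=O(2^q\,\tfrac{d}{h(G)}\,\log d)$'' is precisely the difficulty, and the ``bounded hierarchy of cuts, at most $O(\log d)$ per level'' is not defined. You have correctly located where the obstacle is, but the proposal does not surmount it.
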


\section{Main technical results} \label{sec3}

We now detail the main technical results needed for the proofs of Theorems \ref{thm-kleinberg} and \ref{mn-random}.

\subsection{Combinatorial properties of regular graphs} \label{subsec3.1}

A key conceptual step is the isolation of certain combinatorial properties of regular graphs that are satisfied with high probability by random regular graphs.

The first property concerns the ``domain" graph $G$.

\begin{definition}[Property $\mathcal{D}(\alpha)$] \label{def-D(a)}
Let $n\geqslant d\geqslant 3$ be integers, and let $G$ be a $d$-regular graph on $[n]$. Given $0<\alpha \leqslant 1$, we say that $G$ satisfies \emph{property $\mathcal{D}(\alpha)$} if the following are satisfied.
\begin{enumerate}
\item[$(A)$] \label{Part-A-D(a)} For every nonempty subset $S$ of\, $[n]$ and every positive integer $\ell$,
\[ \big|\{v\in [n]\colon \dist_G(v,S)\leqslant \ell\}\big|
\geqslant \min\Big\{\frac{3n}{4},\alpha (d-1)^\ell\,|S|\Big\}. \]
\item[$(B)$] \label{Part-B-D(a)} We have $\lambda_2(G) \leqslant 2.1 \sqrt{d-1}$.
\end{enumerate}
\end{definition}

Part (\hyperref[Part-B-D(a)]{$B$}) of property $\mathcal{D}(\alpha)$ is a standard spectral property related to edge expansion. On the other hand, note that part (\hyperref[Part-A-D(a)]{$A$}) quantifies vertex expansion; it was recently introduced in \cite[Definition 1.5, part ($A$)]{ADTT24}.

The second property concerns the ``range" graph $H$.
\begin{definition}[Property $\mathcal{R}(\varepsilon)$] \label{def-R(e)}
Let $m\geqslant \Delta\geqslant 3$ be integers, and let $H$ be a $\Delta$-regular graph~on~$[m]$. Given $0<\varepsilon \leqslant 10^{-4}$, we say that $H$ satisfies \emph{property $\mathcal{R}(\varepsilon)$} if the following are satisfied.
\begin{enumerate}
\item[$(A)$] \label{Part-A-R(e)} For every nonempty subset $S$ of\, $[m]$ with $|S|\leqslant m^{1-\varepsilon}$,
\[ c_{L_1}\big( H[S]\big) \leqslant \frac{216}{\varepsilon}. \]
\item[$(B)$] \label{Part-B-R(e)} We have that $H$ is connected and $\diam(H) \leqslant 3 \log_{\Delta-1}m$.
\end{enumerate}
\end{definition}

Notice that part (\hyperref[Part-B-R(e)]{$B$}) of property $\mathcal{R}(\varepsilon)$---just like part (\hyperref[Part-B-D(a)]{$B$}) of property $\mathcal{D}(\alpha)$---is also a standard property of random regular graphs. Part (\hyperref[Part-A-R(e)]{$A$}) is more subtle; it was introduced by Mendel and Naor \cite[Section 6]{MN15}, though closely related versions have been studied earlier (see, e.g., \cite{ALNRRV12}).

It is crucial for our analysis that both properties hold with high probability for random graphs.

\begin{proposition}[Properties $\mathcal{D}(\alpha)$ and $\mathcal{R}(\varepsilon)$ are typical] \label{typical}
The following hold.
\begin{enumerate}
\item[(i)] Let $d\geqslant 3$ be an integer, and set
\begin{equation} \label{e1}
\alpha=\alpha(d):= exp\big(-10^{11}\, \log^2 d\big).
\end{equation}
There exists a constant $\tau_1=\tau_1(d)>0$ such that
\begin{equation} \label{e2}
\mathbb{P}_{G(n,d)}\big[G \text{ satisfies property } \mathcal{D}(\alpha)\big] \geqslant 1- O_d\Big(\frac{1}{n^{\tau_1}}\Big).
\end{equation}
\item[(ii)] Let $\Delta\geqslant 3$ be an integer, and let\, $0<\varepsilon \leqslant 10^{-4}$. There exists a constant $\tau_2=\tau_2(\Delta,\varepsilon)>0$ such that
\begin{equation} \label{e3}
\mathbb{P}_{G(m,\Delta)}\big[H \text{ satisfies property } \mathcal{R}(\varepsilon)\big] \geqslant
1- O_{\Delta,\varepsilon}\Big(\frac{1}{m^{\tau_2}}\Big).
\end{equation}
\end{enumerate}
\end{proposition}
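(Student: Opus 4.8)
The plan is to split each of (i) and (ii) into its two clauses and to handle the ``global'' clauses---part~($B$) of $\mathcal{D}(\alpha)$ and part~($B$) of $\mathcal{R}(\varepsilon)$---by classical facts about random regular graphs, while the substantive ``local'' clauses---part~($A$) in each case---are imported from \cite{ADTT24} and \cite{MN15} respectively. In every instance the assertion is that a certain bad event has probability polynomially small in the relevant vertex count, so we never need to optimize the exponents, and for each part we may take $\tau_1$ (resp.\ $\tau_2$) to be the smaller of the two rates obtained for its two clauses.

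For part (i): Part~($B$), the bound $\lambda_2(G)\leqslant 2.1\sqrt{d-1}$ for $G\sim\mathbb{P}_{G(n,d)}$, is a robust form of Friedman's second-eigenvalue theorem---robust because the threshold $2.1\sqrt{d-1}$ lies strictly above the Ramanujan value $2\sqrt{d-1}$---so it suffices to quote any version of that theorem carrying a failure probability $O_d(n^{-c})$ for some $c=c(d)>0$ (as recorded, e.g., in \cite{ADTT24}). Part~($A$) is exactly the vertex-expansion property isolated in \cite[Definition~1.5, part~($A$)]{ADTT24} and shown there to be typical; the value $\alpha(d)=\exp(-10^{11}\log^2 d)$ in \eqref{e1} is chosen to match the constant furnished there. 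Thus part (i) amounts to quoting these two inputs and intersecting the corresponding events. (For a self-contained proof of part~($A$) we would follow \cite{ADTT24}: a configuration-model union bound over all nonempty $S\subseteq[n]$ and all $\ell\geqslant1$ for which $\alpha(d-1)^\ell|S|$ does not already exceed $\frac{3n}{4}$, controlling the probability that $B_G(S,\ell)$ is smaller than its nominal size---the smallness of $\alpha$ making the bound summable over $S$---with part~($B$) invoked, through \eqref{e-cheeger-e1}, at scales where the ball is already of linear size.)

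For part (ii): Part~($B$) is the conjunction of connectivity of $H\sim\mathbb{P}_{G(m,\Delta)}$, which fails with only polynomially small probability, and the bound $\diam(H)\leqslant 3\log_{\Delta-1}m$, which holds with polynomially small failure probability since in fact $\diam(H)=(1+o(1))\log_{\Delta-1}m$ with high probability---the loose bound $3\log_{\Delta-1}m$ can be proved crudely, e.g., from the vertex growth of balls of radius $(1+o(1))\log_{\Delta-1}m$. Part~($A$) is the heart of the matter: it is the local-$L_1$-embeddability property isolated by Mendel and Naor in \cite[Section~6]{MN15}, with precursors in \cite{ALNRRV12}. Its proof runs in two stages. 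First, a first-moment/union-bound estimate shows that, outside a polynomially small event, every induced subgraph $H[S]$ with $|S|\leqslant m^{1-\varepsilon}$ is structurally ``tree-like'' in the quantitative sense made precise there; this is exactly where one uses $|S|\leqslant m^{1-\varepsilon}$, rather than merely $|S|\leqslant m/2$, via the estimate $\log(m/|S|)\geqslant\varepsilon\log m$. Second, a deterministic embedding lemma converts that structure into a bi-Lipschitz embedding of $H[S]$ into $L_1$ with distortion $O(1/\varepsilon)$. We invoke this result; the remaining task is to confirm that it supplies the explicit bound $216/\varepsilon$ and an error $O_{\Delta,\varepsilon}(m^{-\tau_2})$ with $\tau_2=\tau_2(\Delta,\varepsilon)>0$.

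Since both part~($A$) clauses are essentially imported, the main obstacle is not conceptual but quantitative bookkeeping: matching the vertex-expansion theorem of \cite{ADTT24} to the precise contraction factor $\alpha(d)$ of \eqref{e1} with a polynomially small error, and checking that the Mendel--Naor local-embedding argument indeed yields distortion $\leqslant 216/\varepsilon$ (not merely $O(1/\varepsilon)$) with a polynomially small error. On the probabilistic side, the one genuinely delicate estimate is the union bound behind part~($A$) of $\mathcal{R}(\varepsilon)$, where the number $\binom{m}{s}$ of candidate subsets of each size $s\leqslant m^{1-\varepsilon}$ must be overwhelmed by the improbability that $H[S]$ fails to be sufficiently tree-like---which is precisely what the sublinearity hypothesis buys.
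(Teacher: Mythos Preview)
Your proposal is correct and follows essentially the same approach as the paper: both parts are reduced to their clauses~($A$) and~($B$), with clause~($B$) in each case handled by classical facts (Friedman's theorem \cite{Fr08} for $\mathcal{D}(\alpha)$, connectivity and diameter for $\mathcal{R}(\varepsilon)$), clause~($A$) of $\mathcal{D}(\alpha)$ imported from \cite[Proposition~1.9]{ADTT24}, and clause~($A$) of $\mathcal{R}(\varepsilon)$ obtained exactly as you describe---a sparsity/tree-like union bound (the paper cites \cite[Lemma~7.3]{MN15}) combined with a deterministic embedding lemma (the paper cites \cite[Corollary~6.6]{MN15}, tracking constants to get $216/\varepsilon$).
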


\begin{proof}
First we argue for property $\mathcal{D}(\alpha)$. The fact that a uniformly random $d$-regular graph satisfies part (\hyperref[Part-A-D(a)]{$A$}) of property $\mathcal{D}(\alpha)$ with the desired probability follows from \cite[Proposition 1.9]{ADTT24}. Part (\hyperref[Part-B-D(a)]{$B$}) is a direct consequence of Friedman's second eigenvalue theorem \cite{Fr08}.

Towards property $\mathcal{R}(\varepsilon)$, it is a classical observation---see, e.g., \cite[Proposition 1.9]{ADTT24}$\text{---that}$ part (\hyperref[Part-B-R(e)]{$B$}) of property $\mathcal{R}(\varepsilon)$ is satisfied with the desired probability. So, it remains to verify that a uniformly random $\Delta$-regular graph satisfies part (\hyperref[Part-A-R(e)]{$A$}) of property $\mathcal{R}(\varepsilon)$. To this end, fix $0<\varepsilon \leqslant 10^{-4}$, and set
\[ \mathcal{S}_\varepsilon:= \Big\{ H\in G(m,\Delta)\colon |E_{H[S]}|\leqslant \Big(1+\frac{7}{\varepsilon\log_{\Delta}m}\Big)|S|
\text{ for every } \emptyset\neq S\subseteq [m] \text{ with } |S|\leqslant m^{1-\varepsilon}\Big\},  \]
where $E_{H[S]}$ denotes the edge-set of the induced on $S$ subgraph of $H$. By \cite[Lemma 7.3]{MN15}, we have $\mathbb{P}_{G(m,\Delta)}\big[\mathcal{S}_{\varepsilon}\big] \geqslant 1-O_{\Delta,\varepsilon}(m^{-1+\varepsilon})$.
On the other hand, by part (i) of the proposition, $\mathbb{P}_{G(m,\Delta)}[H \text{ is connected and } \diam(H)\leqslant 3\log_{\Delta-1}m]\geqslant 1-O_{\Delta}(m^{-\tau})$ for some $\tau=\tau(\Delta)>0$. Finally, if $H\in \mathcal{S}_{\varepsilon}$ is connected with $\diam(H)\leqslant 3\log_{\Delta-1}m$, then, by \cite[Corollary 6.6]{MN15} and after keeping track the constants in its proof, for every nonempty subset $S$ of $[m]$ with $|S|\leqslant m^{1-\varepsilon}$,
\[  c_{L_1}\big(H[S]\big)\leqslant \frac{4e}{e-1}\, \bigg(1+ \frac{7}{\varepsilon\log_{\Delta}m} \cdot
3\log_{\Delta-1}m\bigg) \leqslant \frac{216}{\varepsilon}. \qedhere \]
\end{proof}

\subsection{Decomposition, and main propositions}

Let $G\in G(n,d)$ and $H\in G(m,\Delta)$ be two regular graphs, and let $f\colon [n]\to [m]$ be a function from the vertex set of $G$ into the vertex set of~$H$. The proofs of Theorems \ref{thm-kleinberg} and \ref{mn-random} depend on the behavior of the function $f$, especially in regard to whether $f$ is injective, constant, or somewhere in between. The following definition quantifies this behavior.

\begin{definition}[Decomposition] \label{def-deco}
For every $0<\varepsilon \leqslant 10^{-4}$ and every $f\colon [n]\to [m]$, where $n,m$ are positive integers, we set
\begin{align} \label{e5}
M_0(f,\varepsilon) & := \Big\{ v \in [n]\colon \big|f^{-1}\big(f(v)\big)\big| \leqslant
\Big(\frac{n}{m}\Big) m^{2\varepsilon} \Big\}, \\
\label{e6} M_1(f,\varepsilon) & := \Big\{ v \in [n]\colon \Big(\frac{n}{m}\Big) m^{2\varepsilon} <
\big|f^{-1}\big(f(v)\big)\big| \leqslant \Big(\frac{n}{m}\Big) m^{4\varepsilon} \Big\}, \\
\label{e7} M_2(f,\varepsilon) & := \Big\{ v \in [n] \colon \big|f^{-1}\big(f(v)\big)\big| >
\Big(\frac{n}{m}\Big) m^{4\varepsilon} \Big\}.
\end{align}
\end{definition}
The reader can roughly think of the set $M_0(f,\varepsilon)$ as the part of the domain of $f$ on which the function $f$ is essentially injective, while $M_1(f,\varepsilon)$ and $M_2(f,\varepsilon)$ as the parts of the domain of $f$ on which the function $f$ in nearly constant (in the sense that the image of $M_1(f,\varepsilon)$ and $M_2(f,\varepsilon)$ is a ``thin" subset of $[m]$).

The following proposition treats the case when the set $M_2(f,\varepsilon)$ is small.

\begin{proposition}[Validity of Poincar\'{e} inequality if $M_2(f,\varepsilon)$ is small] \label{small-M2}
Let $d,\Delta\geqslant 3$ be integers, let\, $0<\varepsilon\leqslant 10^{-4}$, and set
\begin{equation} \label{e8}
\Gamma_1=\Gamma_1(\varepsilon) := \frac{18}{\varepsilon}.
\end{equation}
There exists a positive integer $m_0=m_0(d)$ with the following property. Let $m\geqslant m_0$ be an integer, and let $H\in G(m,\Delta)$ be connected with $\diam(H)\leqslant 3 \log_{\Delta-1}m$. Then, for any integer $n\geqslant 4d$,
\begin{align} \label{e9}
\mathbb{P}_{G(n,d)} \bigg[G\colon \frac{1}{n^2}  \sum_{v,u\in [n]} \dist_H&\big(f(v),f(u)\big) \leqslant
\Gamma_1 \, \frac{1}{|E_G|} \sum_{\{v,u\}\in E_G}  \dist_H\big(f(v),f(u)\big), \\
& \text{for every } f\colon [n]\to [m] \text{ with } |M_2(f,\varepsilon)|\leqslant \frac{n}{8}\bigg]
\geqslant 1-O_{d}\Big( \frac{1}{m^{0.05\, n}}\Big). \nonumber
\end{align}
\end{proposition}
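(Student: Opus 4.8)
The plan is to fix a map $f\colon[n]\to[m]$ with $|M_2(f,\varepsilon)|\le n/8$, reduce \eqref{e9} for that $f$ to the statement that $G$ has enough edges on which $\dist_H\circ f$ is ``large'', and then control the bad event by a union bound — organized not over $f$, but over the pair consisting of a large ``local'' edge set of $G$ together with the restriction of $f$ to its vertices.

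\smallskip
\noindent\textbf{Reduction for a fixed $f$.} Since $|E_G|=dn/2$, inequality \eqref{e9} for $f$ is equivalent to $\sum_{\{v,u\}\in E_G}\dist_H(f(v),f(u))\ge\frac{d}{2\Gamma_1 n}\,\Sigma$, where $\Sigma:=\sum_{v,u\in[n]}\dist_H(f(v),f(u))$. Put $W:=M_0(f,\varepsilon)\cup M_1(f,\varepsilon)$, so $|W|\ge 7n/8$ and every fiber of $f$ meeting $W$ has size $\le t:=(n/m)m^{4\varepsilon}$. Fix the absolute constant $C'':=8000$ and set $\ell:=\lceil\diam(H)/C''\rceil-1\ge 0$; from $\diam(H)\le 3\log_{\Delta-1}m$ one gets both $(\Delta-1)^\ell\le m^{3/C''}$ and $\Sigma\le n^2\diam(H)\le C''n^2(\ell+1)$. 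Call $\{v,u\}\in E_G$ \emph{good} if $\dist_H(f(v),f(u))\ge\ell+1$; since then $\sum_{\{v,u\}\in E_G}\dist_H(f(v),f(u))\ge(\ell+1)\cdot\#\{\text{good edges}\}$ and $\Gamma_1=18/\varepsilon$, inequality \eqref{e9} for $f$ holds provided $G$ has at least $k_0:=\lceil C''\varepsilon dn/36\rceil$ good edges — a number with $k_0/n\le d/45+1/n$, far below $|E_G|/n=d/2$.

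\smallskip
\noindent\textbf{The structure forced by failure.} Call $\{v,u\}$ \emph{$\ell$-local} (for $f$) if $\dist_H(f(v),f(u))\le\ell$. If \eqref{e9} fails for $f$ then $G$ has more than $dn/2-k_0$ $\ell$-local edges; at most $d\,|M_2(f,\varepsilon)|\le dn/8$ of them meet $M_2(f,\varepsilon)$, so there is a set $F\subseteq E_G$ with exactly $k_1:=\lceil 3dn/8-k_0\rceil$ edges, all $\ell$-local and with both endpoints in $W$. Writing $\beta:=k_1/(dn)$ we have $\beta<1/2$ and $\beta d=3d/8-k_0/n\ge(3/8-1/45)d-1/n>1.05$ for $d\ge 3$ (with ample room when $d\ge4$; recall we only need this for large $n$). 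Setting $S:=V(F)$ and $g:=f|_S$, the map $g\colon S\to V_H$ is $\le t$-to-$1$ (as $S\subseteq W$), $F$ has maximum degree $\le d$, and every edge of $F$ is $\ell$-local for $g$. The key observation is that it now suffices to bound the probability that $G$ admits \emph{some} such triple $(S,g,F)$ — ranging over subsets $S\subseteq[n]$, $\le t$-to-$1$ maps $g\colon S\to V_H$, and edge sets $F\subseteq E_G\cap\binom{S}{2}$ with $k_1$ edges, maximum degree $\le d$, all $\ell$-local for $g$ — so that the union over $f$ is replaced by a union over $(S,g)$, with the cost of ``$F\subseteq E_G$'' carrying the gain.

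\smallskip
\noindent\textbf{The probability estimate (the crux).} We may assume $n\ge m^{1-4\varepsilon}$ (i.e.\ $t\ge1$), since otherwise $W=\emptyset$ and the event is vacuous; in particular, once $m\ge m_0(d)$ this forces $n\ge m_0(d)^{1-4\varepsilon}$, so $n$ is large. For a fixed $\le t$-to-$1$ map $g\colon S\to V_H$, the number of $\ell$-local pairs inside $S$ is at most $\tfrac12|S|\cdot\sup_x|B_H(x,\ell)|\cdot t\le\tfrac12 n\cdot 4(\Delta-1)^\ell t=:\tfrac{n^2}{2C'}$, where $C'=\frac{m^{1-4\varepsilon}}{4(\Delta-1)^\ell}\ge\frac14 m^{1-4\varepsilon-3/C''}$; and for any fixed $F$ with $k_1$ edges and maximum degree $\le d$, the configuration model gives $\mathbb{P}_{G(n,d)}[F\subseteq E_G]\le c_d^{-1}\big(\frac{d^2}{dn-2k_1}\big)^{k_1}$ with $c_d>0$ depending only on $d$ (the probability is $0$ if $\Delta(F)>d$). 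Since the number of admissible pairs $(S,g)$ is at most $2^n m^n$,
\begin{equation*}
\mathbb{P}_{G(n,d)}\big[\exists\,f\text{ with }|M_2(f,\varepsilon)|\le n/8\text{ failing }\eqref{e9}\big]
\ \le\ 2^n m^n\cdot\binom{n^2/(2C')}{k_1}\,c_d^{-1}\Big(\frac{d^2}{dn-2k_1}\Big)^{k_1}
\ \le\ c_d^{-1}\,2^n m^n\Big(\frac{C_\beta}{C'}\Big)^{\beta dn},
\end{equation*}
where $C_\beta:=\frac{e}{2\beta(1-2\beta)}$ (bounded in terms of $d$ alone). Using $C'\ge\frac14 m^{1-4\varepsilon-3/C''}$ and $\beta d>1.05$, the exponent of $m$ in the last expression is $1-(1-4\varepsilon-3/C'')\beta d\le-0.05-\delta$ for some $\delta=\delta(d)>0$, and the remaining factor is $(C_d)^n$ with $C_d=C_d(d)$; hence the whole expression is $O_d(m^{-0.05n})$ once $m\ge m_0(d):=C_d^{1/\delta}$, as required. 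This last step is the main obstacle: the choice of $\ell$ must make $\ell+1$ comparable to $\diam(H)$ (so that $\Sigma\le C''n^2(\ell+1)$) while keeping $(\Delta-1)^\ell=m^{o(1)}$ (so that $C'=m^{1-o(1)}$ and the $\ell$-local pairs are few), and then the exponent $1-(1-o(1))\beta d$ of $m$ must fall below $-0.05$ — it is this constraint that pins down $C''\lesssim\varepsilon^{-1}$ (forcing the $d$-dependence of $m_0$), and it is tight for $d=3$, where the factor $3/8$ coming from edges touching $M_2(f,\varepsilon)$, the exponents $2\varepsilon,4\varepsilon$ in Definition~\ref{def-deco}, and the slack $\Gamma_1=18/\varepsilon$ are precisely calibrated to secure $\beta d>1.05$.
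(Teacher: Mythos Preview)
Your argument is correct and follows essentially the same strategy as the paper's: on $W=[n]\setminus M_2(f,\varepsilon)$ the fibers of $f$ have size at most $(n/m)m^{4\varepsilon}$, so the set of ``local'' pairs in $W$ has size $O(n^2 m^{-1+O(\varepsilon)})$; failure of the Poincar\'e inequality then forces $G$ to place $\Theta(dn)$ edges inside this sparse set, and the probability of that event is $m^{-(1+\Omega(1))n}$, small enough to beat the union over the $\leqslant m^n$ maps. The only differences are bookkeeping --- the paper fixes $f$, transfers to the Erd\H{o}s--R\'enyi model via Fact~\ref{fact-asymp-enum}, and bounds a hypergeometric tail after conditioning on $|E_{G[R]}|$, whereas you union-bound over subgraph data $(S,g,F)$ and estimate $\mathbb{P}[F\subseteq E_G]$ through the configuration model; note that your reorganization into $(S,g)$ does not actually save over the direct union over $f$ (both cost $m^{n+o(n)}$), but the final estimates coincide.
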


The proof of Proposition \ref{small-M2} is given in Section \ref{sec4}, and it relies on strong concentration bounds that are available if $|M_2(f,\varepsilon)|\leqslant \frac{n}{8}$.

The next proposition complements Proposition \ref{small-M2} and treats the case when the set $M_2(f,\varepsilon)$~is large.

\begin{proposition}[Validity of Poincar\'{e} inequality if $M_2(f,\varepsilon)$ is large] \label{large-M2}
Let $d,\Delta\geqslant 3$ be integers, let~$0<\alpha\leqslant 1$, let\,  $0<\varepsilon \leqslant 10^{-4}$, and set
\begin{equation} \label{e10}
\Gamma_2=\Gamma_2(d,\alpha,\varepsilon) := \frac{1}{\alpha\cdot \varepsilon^2} \, \exp\big( 3632\cdot \log^2 d\big).
\end{equation}
There exists a positive integer $m_1=m_1(d,\alpha,\varepsilon)$ with the following property. Let $n\geqslant d$ and $m\geqslant m_1$ be integers, and let\, $G\in G(n,d)$ and $H\in G(m,\Delta)$. Assume that $G$ satisfies property~$\mathcal{D}(\alpha)$, and that $H$ satisfies property $\mathcal{R}(\varepsilon)$. Then, for any $f\colon [n]\to [m]$ with $|M_2(f,\varepsilon)|\geqslant \frac{n}{8}$,
where $M_2(f,\varepsilon)$ is as in \eqref{e7}, we have
\begin{equation} \label{e11}
\frac{1}{n^2} \sum_{v,u\in [n]} \dist_H\big(f(v),f(u)\big) \leqslant
\Gamma_2 \, \frac{1}{|E_G|} \sum_{\{v,u\}\in E_G}  \dist_H\big(f(v),f(u)\big).
\end{equation}
\end{proposition}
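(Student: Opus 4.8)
The plan is to reduce the inequality~\eqref{e11} to the case of a function with ``thin'' image, where Matou\v{s}ek's extrapolation applies, and to perform this reduction using the vertex expansion encoded in property~$\mathcal{D}(\alpha)$.

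\emph{Thinness of $f(M_2(f,\varepsilon))$.} The fibers $f^{-1}(x)$, $x\in S:=f\big(M_2(f,\varepsilon)\big)$, are pairwise disjoint subsets of $[n]$, each of size strictly larger than $\tfrac{n}{m}m^{4\varepsilon}$; hence $|S|< m^{1-4\varepsilon}\leqslant m^{1-\varepsilon}$. By part~(A) of property~$\mathcal{R}(\varepsilon)$ we get $c_{L_1}(H[S'])\leqslant 216/\varepsilon$ for any $S'\supseteq S$ with $|S'|\leqslant m^{1-\varepsilon}$. The hypothesis $|M_2(f,\varepsilon)|\geqslant n/8$ says that this thin set already carries a constant fraction of the domain of $f$.

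\emph{Compression.} The heart of the argument is to replace $f$ by a ``compressed'' map $g\colon[n]\to[m]$ whose image lies in a set $S'\supseteq S$ with $|S'|\leqslant m^{1-\varepsilon}$ on which the induced metric $\dist_{H[S']}$ is comparable to the ambient metric $\dist_H$ (one enlarges $S$ by attaching a spanning structure of $H$-geodesics, each of length at most $\diam(H)\leqslant 3\log_{\Delta-1}m$, which keeps $|S'|\leqslant m^{1-\varepsilon}$ once $m\geqslant m_1(d,\alpha,\varepsilon)$---this is where the dependence of $m_1$ on $\varepsilon$ comes from), with the following properties: (i)~$g=f$ on $M_2(f,\varepsilon)$; (ii)~the edge-energy is not inflated, $\sum_{\{v,u\}\in E_G}\dist_{H[S']}\!\big(g(v),g(u)\big)\lesssim_{d,\alpha}\sum_{\{v,u\}\in E_G}\dist_H\!\big(f(v),f(u)\big)$; and (iii)~the total energy of $f$ is recovered from that of $g$, up to a multiplicative factor $\tfrac{1}{\alpha\varepsilon}\exp(O(\log^2 d))$ and an additive term of order $n\sum_{\{v,u\}\in E_G}\dist_H(f(v),f(u))$. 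Property~$\mathcal{D}(\alpha)$ is what powers~(iii): part~(A) gives $\big|B_G\big(M_2(f,\varepsilon),\ell_0\big)\big|\geqslant\tfrac{3n}{4}$ as soon as $(d-1)^{\ell_0}\gtrsim 1/\alpha$---so $\ell_0\asymp_d\log(1/\alpha)$ and $(d-1)^{\ell_0}\asymp 1/\alpha$---and one retracts the vertices of this ball onto $M_2(f,\varepsilon)$ along $G$-geodesics; arranged as a forest rooted at $M_2(f,\varepsilon)$, each edge of $G$ is reused at most $O\big((d-1)^{\ell_0}\big)=O(1/\alpha)$ times, which is the source of the $1/\alpha$ in~$\Gamma_2$. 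The residual $\tfrac{n}{4}$ of vertices not reached at scale $\ell_0$ (part~(A) of~$\mathcal{D}(\alpha)$ saturates at $\tfrac{3n}{4}$), together with the many identifications forced by compressing the ``injective-like'' part $M_0(f,\varepsilon)$, are exactly what the randomized compression scheme of Lemma~\ref{l5.8} is built to handle, using part~(A) of $\mathcal{D}(\alpha)$ in an essential way; the remaining factors $1/\varepsilon$ and $\exp(O(\log^2 d))$ come from the distortion bound in~$\mathcal{R}(\varepsilon)$, the thresholds $m^{2\varepsilon},m^{4\varepsilon}$ of Definition~\ref{def-deco}, and further applications of vertex expansion at scale $O_d(\log d)$.

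\emph{Conclusion.} By Fact~\ref{fact-L1} applied to the finite metric space $H[S']$, together with $\lambda_2(G)\leqslant 2.1\sqrt{d-1}$ from part~(B) of~$\mathcal{D}(\alpha)$ and $c_{L_1}(H[S'])\leqslant 216/\varepsilon$, one has
\[
\gamma\big(G,\dist_{H[S']}\big)\;\leqslant\;\frac{216}{\varepsilon}\cdot\frac{d}{2\big(d-2.1\sqrt{d-1}\big)}\;=\;O_d\big(1/\varepsilon\big).
\]
Applying this Poincar\'e inequality to $g$, then invoking~(ii), bounds the total energy of $g$ by a quantity of order $\tfrac{1}{\varepsilon}\,n\sum_{\{v,u\}\in E_G}\dist_H(f(v),f(u))$ (with an implied constant depending on $d,\alpha$); feeding this and~(i) into~(iii) then yields~\eqref{e11} with $\Gamma_2$ as in~\eqref{e10}. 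I expect the main obstacle to be step~(iii): compressing $M_0(f,\varepsilon)$ necessarily collapses many pairs of points, and one must show---\emph{deterministically} in $f$, since no union bound over $f$ is available for this proposition---that enough pairwise distance survives; turning the vertex-expansion content of $\mathcal{D}(\alpha)$ into the clean estimate~(iii), and in particular controlling the constant fraction of vertices lying beyond the reach of part~(A), is the technical core (Lemma~\ref{l5.8}).
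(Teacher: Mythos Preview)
Your high-level plan---compress $f$ to a map with thin image, apply Fact~\ref{fact-L1} to the compressed map, and control how the edge and total energies transform under compression---matches the paper's, and you correctly identify Lemma~\ref{l5.8} as the technical core. However, the specific mechanisms you describe diverge from the paper's in ways that matter.

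First, the compression in the paper is not a retraction along $G$-geodesics onto $M_2(f,\varepsilon)$. Instead, after a dyadic decomposition of $M_0(f,\varepsilon)$ into sets $M_0',M_0''$ (Definition~\ref{def-dyadic}), the random map $\boldsymbol{f}$ is obtained by collapsing $M_0'$ to a single \emph{random} value $f(\mathbf v)$ with $\mathbf v$ uniform in $M_0'$, and collapsing the negligible set $M_0''$ to $\{1\}$; no $G$-paths or forests enter the definition of $\boldsymbol{f}$. Your enlargement $S\mapsto S'$ by attaching geodesics is also unnecessary: property~$\mathcal{R}(\varepsilon)$ concerns the restriction of the ambient metric $\dist_H$, so Fact~\ref{fact5.1} applies directly once $|\mathrm{Im}(\boldsymbol f)|\leqslant m^{1-\varepsilon}$.

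Second, you have the roles of (ii) and (iii) reversed. The total-energy survival (your (iii), the paper's \eqref{e5.16}) uses \emph{no} vertex expansion and carries the absolute constant $1/16$: since $|[n]\setminus M_0'|\geqslant|M_2(f,\varepsilon)|\geqslant n/8$, routing $M_0'\times M_0'$ pairs through the complement via the triangle inequality costs only a factor of $8$ (see \eqref{e5.38}--\eqref{e5.39}). Property~$\mathcal{D}(\alpha)$ is used instead for the edge-energy control (your (ii), the paper's \eqref{e5.15}), via Claim~\ref{claim5.9}: one expands \emph{from $M_0'$} (not from $M_2$) in $G$, classifies edges at each level as ``typical'' or ``atypical'' according to whether their $H$-length exceeds a geometrically decaying threshold, and argues by contradiction that if the edge energy were below $\eta|M_0'|\log_{\Delta-1}m$, then too many points of $f(M_0')$ would lie within $H$-distance $\tfrac{\varepsilon}{2}\log_{\Delta-1}m$ of the thin set $f(M_2(f,\varepsilon))$, violating the $\Delta$-regular volume bound on $H$-balls. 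This lower bound then absorbs the crude upper bound $3d|M_0'|\log_{\Delta-1}m$ on the edge energy added by collapsing $M_0'$, and is where the factor $1/\alpha$ (through $\eta$ and $\tilde\alpha$) actually enters $\Gamma_2$. Your forest edge-reuse heuristic is thus not the mechanism, and your proposed handling of the residual $n/4$ vertices by deferring to Lemma~\ref{l5.8} does not match what that lemma actually proves.
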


The proof of Proposition \ref{large-M2} is given in Section \ref{sec5}, and it is significantly more demanding. It relies on properties $\mathcal{D}(\alpha)$ and $\mathcal{R}(\varepsilon)$, a dyadic decomposition of the set $M_0(f,\varepsilon)$, and a randomized ``compression" argument.

\section{Proof of Proposition \ref*{small-M2}} \label{sec4}

Fix $d,\Delta,\varepsilon,m,H$ and $n$; clearly, we may assume that $dn$ is even. Let $\mathcal{F}$ denote the set of all maps $f\colon [n] \to [m]$ with $|M_2(f,\varepsilon)|\leqslant \frac{n}{8}$. For every $f\in\mathcal{F}$, set
\[ \mathcal{G}_f := \bigg\{ G\in G(n,d) \colon \frac{1}{n^2}  \sum_{v,u\in [n]} \dist_H\big(f(v),f(u)\big)
\leqslant \Gamma_1 \, \frac{1}{|E_G|} \sum_{\{v,u\}\in E_G}  \dist_H\big(f(v),f(u)\big) \bigg\}. \]
Since $|\mathcal{F}|\leqslant m^n$, by a union bound, it is enough to show that for every $f\in \mathcal{F}$,
\begin{equation} \label{e4.1}
\mathbb{P}_{G(n,d)}\big[ \mathcal{G}_f\big] \geqslant 1-O_{d}\Big( \frac{1}{m^{1.05 n}}\Big).
\end{equation}
So, fix $f\in \mathcal{F}$, and set $R:= [n]\setminus M_2(f,\varepsilon)$ and
\[  \mathcal{A} := \bigg\{ \{v,u\} \in \binom{R}{2} \colon \mathrm{dist}_H \big(f(v), f(u)\big)
\leqslant 4\varepsilon \log_{\Delta-1}m \bigg\}. \]
Notice that by the $\Delta$-regularity of $H$, the definition of $M_2(f,\varepsilon)$ and the fact that $\Delta\geqslant 3$, we have
\begin{equation} \label{e4.2}
|\mathcal{A}| \leqslant \frac{3}{2}\, |R|\, n\, m^{8\varepsilon-1}.
\end{equation}
Also observe that, since $|M_2(f,\varepsilon)|\leqslant\frac{n}{8}$, for every $G\in G(n,d)$,
\begin{equation} \label{e4.3}
|E_{G[R]}|\geqslant \frac{dn}{2} - d|M_2(f,\varepsilon)|\geqslant \frac{3dn}{8},
\end{equation}
where $E_{G[R]}$ denotes the edge-set of the induced on $R$ subgraph of $G$. For every $k\in [\frac{dn}{2}]$ with $k\geqslant\frac{3dn}{8}$, set
\[ \mathcal{E}_k := \big\{ G \in G(n)\colon |E_{G[R]}|=k \big\}, \]
where $G(n)$ denotes the set of all graphs on $[n]$. Since $\diam(H)\leqslant3\log_{\Delta-1}m$, it holds, for every $k\in [\frac{dn}{2}]$ with $k\geqslant\frac{3dn}{8}$ and every $G\in(G(n,d)\setminus\mathcal{G}_f)\cap\mathcal{E}_k$, that
\begin{equation*}
\frac{1}{|E_{G[R]}|}\sum_{\{v,u\}\in E_{G[R]}} \dist_H\big(f(v),f(u)\big)
\leqslant \frac{4}{3|E_G|} \sum_{\{v,u\}\in E_G} \dist_H\big(f(v),f(u)\big) \leqslant \frac{4}{\Gamma_1}\log_{\Delta-1}m.
\end{equation*}
Therefore, by Markov's inequality, the definition of $\mathcal{A}$, and the choice of $\Gamma_1$ in \eqref{e8}, we see that $\big|E_{G[R]}\cap\big(\binom{R}{2}\setminus\mathcal{A}\big)\big| \leqslant k/18$. Set $p:=d/(n-1)$, and let $\mathbb{P}_{G(n,p)}$ denote the law of the classical (independent) Erd\H{o}s--R\'enyi random graph. Then the previous observations imply, after conditioning on the number of edges contained in $\binom{[n]}{2}\setminus\binom{R}{2}$, that for every $k\in [\frac{dn}{2}]$ with $k\geqslant\frac{3dn}{8}$,
\begin{equation}\label{e4.4}
\mathbb{P}_{G(n,p)}\big[ G(n,d)\setminus\mathcal{G}_f\, \big| \, \mathcal{E}_k\big]
\leqslant \mathbb{P}_{G(n,p)}\big[ |\mathcal{A} \cap E_{\mathbf{G}[R]}| \geqslant 17k/18\, \big| \, \mathcal{E}_k\big]
\leqslant \sum_{j= \lceil 17k/18 \rceil}^{k} \frac{\binom{|\mathcal{A}|}{j}
\binom{|\binom{R}{2}\setminus\mathcal{A}|}{k-j}}{\binom{{\binom{|R|}{2}}}{k}},
\end{equation}
where $\mathbf{G}\sim \mathbb{P}_{G(n,p)}$. Since $n\geqslant 4d$, $\varepsilon\leqslant 10^{-4}$, $d\geqslant3$ and $|R|\geqslant \frac{7n}{8}$, by \eqref{e4.2}, we see that, for every $m\geqslant 28^{1000}$, the right-hand side of \eqref{e4.4} is upper bounded by
\begin{equation} \label{e4.5}
\frac{k\binom{|\mathcal{A}|}{\lceil 17k/18 \rceil}
\binom{|\binom{R}{2}\setminus\mathcal{A}|}{k-\lceil 17k/18\rceil}}{18\binom{{\binom{|R|}{2}}}{k}}
\leqslant \Bigg(\frac{4|\mathcal{A}|}{\binom{|R|}{2}-k}\Bigg)^{17k/18}
\leqslant \big(28 m^{8\varepsilon-1}\big)^{\frac{3\cdot17}{8\cdot18}dn}
\leqslant m^{-1.059\cdot \frac{d}{3}n}.
\end{equation}
To complete the proof, we will use the following asymptotic enumeration of regular graphs \cite{MW91}.

\begin{fact} \label{fact-asymp-enum}
If $n \geqslant d\geqslant 3$ are integers with $dn$ even, then the cardinality of\, $G(n,d)$ is of~order
\[ \Theta_d\bigg(\frac{(d n)!}{(d n/2)! 2^{d n/2} (d!)^n}\bigg); \]
in particular, there exists a constant $C_1=C_1(d)>0$ such that
\[ \mathbb{P}_{G(n,p)}\big[ G(n,d)\big] \geqslant \exp(-C_1 n), \]
where $p=d/(n-1)$ and\, $\mathbb{P}_{G(n,p)}$ denotes the law of the Erd\H{o}s--R\'enyi random graph.
\end{fact}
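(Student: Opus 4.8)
The plan is to work with the configuration (pairing) model. To each vertex of $[n]$ associate a cell of $d$ \emph{half-edges}, so that there are $dn$ half-edges in total, and let $\mathbf{M}$ be a uniformly random perfect matching of these half-edges; the number of such matchings is $\frac{(dn)!}{(dn/2)!\,2^{dn/2}}$ (the double factorial $(dn-1)!!$). Contracting each cell to a single vertex turns $\mathbf{M}$ into a $d$-regular multigraph on $[n]$, and a routine count shows that every \emph{simple} $d$-regular graph on $[n]$ arises from exactly $(d!)^n$ matchings, since the $(d!)^n$ orderings of the half-edges within the cells correspond bijectively to the matchings projecting to it. This already gives the upper bound $|G(n,d)|\leqslant \frac{(dn)!}{(dn/2)!\,2^{dn/2}(d!)^n}$, and moreover the projection of $\mathbf{M}$, conditioned on being simple, is uniformly distributed on $G(n,d)$.

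For the matching lower bound --- hence the two-sided estimate $\Theta_d(\cdot)$ --- I would invoke the classical fact that, for fixed $d$, the probability that $\mathbf{M}$ projects to a simple graph stays bounded away from $0$ as $n\to\infty$. Concretely, if $L$ and $M$ denote the number of loops and the number of pairs of parallel edges produced by $\mathbf{M}$, then a short moment computation (each potential loop, resp.\ parallel pair, is present with probability $\Theta(n^{-1})$, resp.\ $\Theta(n^{-2})$, and these events become asymptotically independent) shows that $(L,M)$ converges in distribution to a pair of independent Poisson random variables with means $\tfrac{d-1}{2}$ and $\tfrac{(d-1)^2}{4}$; consequently $\mathbb{P}[\mathbf{M}\text{ simple}]=\mathbb{P}[L=M=0]\to e^{-(d^2-1)/4}>0$. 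This is precisely the input underlying the asymptotic enumeration of \cite{MW91} (going back to Bender--Canfield and Bollob\'as), and may simply be cited. Combining with the previous paragraph yields $|G(n,d)|=\Theta_d\!\big(\frac{(dn)!}{(dn/2)!\,2^{dn/2}(d!)^n}\big)$.

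It remains to deduce the ``in particular'' clause. Every $G\in G(n,d)$ has exactly $dn/2$ edges, so under $\mathbb{P}_{G(n,p)}$ with $p=d/(n-1)$ it has probability $p^{dn/2}(1-p)^{\binom{n}{2}-dn/2}$, whence
\[
\mathbb{P}_{G(n,p)}\big[G(n,d)\big]=|G(n,d)|\;p^{dn/2}\,(1-p)^{\binom{n}{2}-dn/2}.
\]
Stirling's formula applied to $(dn)!$ and $(dn/2)!$ gives $\frac{(dn)!}{(dn/2)!\,2^{dn/2}}=\Theta\big((dn/e)^{dn/2}\big)$, so, using $(n/(n-1))^{dn/2}=\Theta_d(1)$,
\[
|G(n,d)|\;p^{dn/2}=\Theta_d\!\Big((d!)^{-n}(dn/e)^{dn/2}\big(d/(n-1)\big)^{dn/2}\Big)=\Theta_d\!\Big((d!)^{-n}(d^2/e)^{dn/2}\Big).
\]
Since $\log(1-p)=-p+O_d(n^{-2})$ and $\binom{n}{2}-dn/2=O(n^2)$, we also have $(1-p)^{\binom{n}{2}-dn/2}=\exp\!\big(-\tfrac{d}{n-1}(\tfrac{n(n-1)}{2}-\tfrac{dn}{2})+O_d(1)\big)=\Theta_d(e^{-dn/2})$. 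Multiplying gives $\mathbb{P}_{G(n,p)}[G(n,d)]=\Theta_d\!\big((d^d/(e^d d!))^n\big)$; as $d!>(d/e)^d$, the base lies in $(0,1)$, so $\mathbb{P}_{G(n,p)}[G(n,d)]\geqslant\exp(-C_1 n)$ for a suitable $C_1=C_1(d)>0$.

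The only non-elementary ingredient is the simplicity-probability lower bound in the second paragraph; everything else is counting and Stirling. I would therefore concentrate the write-up on either citing \cite{MW91} for that input or, if a self-contained argument is preferred, carrying out the method-of-moments estimate for $(L,M)$ carefully.
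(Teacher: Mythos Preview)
Your argument is correct. The paper itself does not supply a proof of this fact: it simply cites \cite{MW91} for the asymptotic enumeration and states the Erd\H{o}s--R\'enyi lower bound as a consequence, so there is no ``paper's own proof'' to compare against beyond that citation. What you have written is precisely the standard configuration-model derivation underlying \cite{MW91} (and Bender--Canfield/Bollob\'as), together with a clean Stirling computation for the ``in particular'' clause; both parts are sound, and your identification of the simplicity-probability bound as the one nontrivial input is exactly right.
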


Since $d\geqslant3$, by \eqref{e4.3}--\eqref{e4.5} and Fact \ref{fact-asymp-enum}, there exists a positive integer $m_0=m_0(d)\geqslant 28^{1000}$ such that for every $n\geqslant 4d$ and every $m\geqslant m_0$,
\begin{equation}\label{e4.6}
\mathbb{P}_{G(n,d)}\big[G(n,d)\setminus\mathcal{G}_f \big] =
\frac{\mathbb{P}_{G(n,p)}\big[G(n,d)\setminus\mathcal{G}_f \big]}{\mathbb{P}_{G(n,p)}\big[G(n,d) \big]}
\leqslant m^{-1.05\cdot \frac{d}{3}n} \leqslant m^{-1.05\, n}.
\end{equation}
The proof is completed after observing that \eqref{e4.1} follows from \eqref{e4.6}.

\section{Proof of Proposition \ref*{large-M2}} \label{sec5}

\subsection{Preliminary tools}

We start by isolating some preliminary results that are needed for the proof of Proposition \ref{large-M2}. The first one is a consequence of Fact \ref{fact-L1}.

\begin{fact} \label{fact5.1}
Let $m\geqslant\Delta\geqslant3$ be integers, let $0<\varepsilon\leqslant10^{-4}$, and let $H\in G(m,\Delta)$ satisfy property~$\mathcal{R}(\varepsilon)$. Then, for every pair of integers $n\geqslant d\geqslant3$, every $G\in G(n,d)$ with $\lambda_2(G)\neq d$, and every function $f\colon [n]\to[m]$ with $|\mathrm{Im}(f)|\leqslant m^{1-\varepsilon}$, we have
\begin{equation} \label{e5.1}
\frac{1}{n^2} \sum_{v,u\in [n]} \mathrm{dist}_H\big(f(v), f(u)\big) \leqslant
\Big(\frac{216}{\varepsilon} \cdot\frac{d}{2(d-\lambda_2(G))}\Big) \,
\frac{1}{|E_G|} \sum_{\{v,u\}\in E_G} \mathrm{dist}_H\big(f(v), f(u)\big).
\end{equation}
In particular, if\, $G$ satisfies property $\mathcal{D}(\alpha)$ for some $0<\alpha\leqslant1$, then for any $f\colon [n]\to[m]$ with $|\mathrm{Im}(f)|\leqslant m^{1-\varepsilon}$,
\begin{equation}\label{e5.2}
\frac{1}{n^2} \sum_{v,u\in [n]} \mathrm{dist}_H\big(f(v), f(u)\big) \leqslant
\frac{10746}{\varepsilon} \cdot \frac{1}{|E_G|} \sum_{\{v,u\}\in E_G} \mathrm{dist}_H\big(f(v), f(u)\big).
\end{equation}
\end{fact}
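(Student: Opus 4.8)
\subsection*{Proof proposal for Fact \ref{fact5.1}}

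The plan is to derive Fact~\ref{fact5.1} by combining Fact~\ref{fact-L1} with part~(\hyperref[Part-A-R(e)]{$A$}) of property $\mathcal{R}(\varepsilon)$; the hypothesis $|\mathrm{Im}(f)|\leqslant m^{1-\varepsilon}$ is there precisely so that $\mathcal{R}(\varepsilon)$ becomes applicable. First I would fix $G$, $H$ and $f$ as in the statement and pass to the image $S:=\mathrm{Im}(f)\subseteq[m]$, noting that $f$ factors through $S$ and that the quantities $\dist_H(f(v),f(u))$, $v,u\in[n]$, only involve the finite metric space $\mathcal{M}:=H[S]$, regarded as $(S,\dist_H|_S)$, which has $|S|\leqslant m^{1-\varepsilon}$ points. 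By part~(\hyperref[Part-A-R(e)]{$A$}) of property $\mathcal{R}(\varepsilon)$ we have $c_{L_1}(\mathcal{M})\leqslant\frac{216}{\varepsilon}$, so Fact~\ref{fact-L1} --- applicable since $\lambda_2(G)<d$ --- gives
\[
\gamma(G,\dist_H|_S)\;\leqslant\; c_{L_1}(\mathcal{M})\cdot\frac{d}{2\big(d-\lambda_2(G)\big)}\;\leqslant\;\frac{216}{\varepsilon}\cdot\frac{d}{2\big(d-\lambda_2(G)\big)}.
\]
Writing out the definition of $\gamma(G,\dist_H|_S)$ for the particular map $f$ (which takes values in $S$) is then exactly inequality~\eqref{e5.1}.

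For the ``in particular'' clause I would additionally use part~(\hyperref[Part-B-D(a)]{$B$}) of property $\mathcal{D}(\alpha)$, namely $\lambda_2(G)\leqslant 2.1\sqrt{d-1}$. One first checks that $d-2.1\sqrt{d-1}>0$ for every integer $d\geqslant3$ (in particular $\lambda_2(G)<d$, which justifies the previous step), so that
\[
\frac{d}{2\big(d-\lambda_2(G)\big)}\;\leqslant\;\frac{d}{2\big(d-2.1\sqrt{d-1}\big)}.
\]
It then remains to observe that $d\mapsto\frac{d}{2(d-2.1\sqrt{d-1})}$ is decreasing for $d\geqslant3$, hence bounded by its value at $d=3$, which is less than $49.75$; multiplying by $\frac{216}{\varepsilon}$ and using $216\cdot49.75=10746$ yields~\eqref{e5.2}.

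I do not expect any genuine obstacle: Fact~\ref{fact5.1} is a repackaging of Fact~\ref{fact-L1} and property $\mathcal{R}(\varepsilon)$. The only observations that truly matter are that one must apply $c_{L_1}$ to the \emph{image} $\mathrm{Im}(f)$ --- which is small, so $\mathcal{R}(\varepsilon)$ applies and the distortion is an absolute constant rather than the $\Theta(\log m)$ one would get from all of $H$ --- and the elementary monotonicity-and-arithmetic check that pins down the numerical constant $10746$, for which $d=3$ is the (barely) extremal case.
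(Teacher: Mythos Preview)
Your proposal is correct and follows exactly the paper's approach: inequality~\eqref{e5.1} is obtained from Fact~\ref{fact-L1} together with part~(\hyperref[Part-A-R(e)]{$A$}) of property~$\mathcal{R}(\varepsilon)$ applied to $S=\mathrm{Im}(f)$, and~\eqref{e5.2} then follows from part~(\hyperref[Part-B-D(a)]{$B$}) of property~$\mathcal{D}(\alpha)$. Your write-up simply spells out the numerical check (monotonicity in $d$ and evaluation at $d=3$ giving $216\cdot 49.75=10746$) that the paper leaves implicit.
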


\begin{proof}
Inequality \eqref{e5.1} follows from  Fact \ref{fact-L1} and part (\hyperref[Part-A-R(e)]{$A$}) of property $\mathcal{R}(\varepsilon)$, while \eqref{e5.2} follows from \eqref{e5.1} and part (\hyperref[Part-B-D(a)]{$B$}) of property $\mathcal{D}(\alpha)$.
\end{proof}

The next lemma is an expansion result that is available for all regular graphs satisfying part~(\hyperref[Part-B-D(a)]{$B$}) of property $\mathcal{D}(\alpha)$.

\begin{lemma} \label{lemma5.2}
Let $n\geqslant d\geqslant3$ be integers,\, let $0<\xi<1$, and let\, $G\in G(n,d)$ with $\lambda_2(G)\leqslant2.1\sqrt{d-1}$. Then, for every $A\subseteq [n]$ with $|A|\leqslant (1-\xi)n$, we have
\begin{equation} \label{e5.3}
\big|\{e\in E_G\colon e\cap A\neq\emptyset \text{ and } e\setminus A\neq\emptyset\}\big| \geqslant
\frac{\xi}{1-\xi} \cdot \frac{d}{200}\, |A|
\end{equation}
and
\begin{equation} \label{e5.4}
\big|\{v\in [n]\colon \mathrm{dist}_G(v,A)=1\}\big| \geqslant \frac{\xi}{1-\xi} \cdot \frac{1}{200}\, |A|.
\end{equation}
\end{lemma}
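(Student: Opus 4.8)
The plan is to derive \eqref{e5.3} from the spectral bound on $\lambda_2(G)$ via the Cheeger inequality \eqref{e-cheeger-e1}, and then deduce the vertex-expansion statement \eqref{e5.4} as an immediate consequence, since each crossing edge contributes at least one vertex at distance exactly $1$ from $A$. First I would reduce to the case $|A| \leqslant n/2$: if $|A| > n/2$, note that the set of edges crossing $A$ is the same as the set of edges crossing $A^{\complement}$, and $|A^{\complement}| = n - |A| < n/2$, while $|A^{\complement}| = n - |A| \geqslant \xi n \geqslant \frac{\xi}{1-\xi}|A| \cdot \frac{1-\xi}{1} \cdot \frac{|A^{\complement}|}{|A|}$ — more carefully, I would check that the bound one gets for $A^{\complement}$ in terms of $|A^{\complement}|$ implies the claimed bound in terms of $|A|$, using $|A^{\complement}| \geqslant \frac{\xi}{1-\xi}|A|$ (which holds because $|A| \leqslant (1-\xi)n$ gives $n - |A| \geqslant \xi n \geqslant \frac{\xi}{1-\xi}|A|$).

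For the main case $|A| \leqslant n/2$, I would invoke the definition of the Cheeger constant $h(G)$ together with the lower bound $h(G) \geqslant \frac{d - \lambda_2(G)}{2}$ from \eqref{e-cheeger-e1}. Since $\lambda_2(G) \leqslant 2.1\sqrt{d-1}$, we get
\[
h(G) \geqslant \frac{d - 2.1\sqrt{d-1}}{2},
\]
and the number of crossing edges is at least $h(G)|A| \geqslant \frac{d - 2.1\sqrt{d-1}}{2}|A|$. The routine estimate is then to check that $\frac{d - 2.1\sqrt{d-1}}{2} \geqslant \frac{d}{200}$ for all integers $d \geqslant 3$; this holds comfortably (e.g. for $d=3$ the left side is about $0.02 \cdot d$... actually one should verify the worst case $d=3$ or $d=4$ numerically, since $d - 2.1\sqrt{d-1}$ can be small for small $d$ — at $d=5$ it is $5 - 4.2 = 0.8 = 0.16d$, at $d=4$ it is $4 - 2.1\sqrt 3 \approx 0.36 = 0.09d$, at $d=3$ it is $3 - 2.1\sqrt 2 \approx 0.03 = 0.01d$, so the constant $\frac{1}{200}$ after dividing by $2$ is the relevant worst case and needs the $d=3$ check). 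Since in the main case $|A| \leqslant n/2 \leqslant (1-\xi)n \cdot \frac{1/2}{1-\xi}$ and $\frac{\xi}{1-\xi} \leqslant 1$ whenever $\xi \leqslant 1/2$, the bound $\frac{d}{200}|A| \geqslant \frac{\xi}{1-\xi}\cdot\frac{d}{200}|A|$ is trivial when $\xi \leqslant 1/2$; for $\xi > 1/2$ one still has $|A| \leqslant (1-\xi)n < n/2$ and the factor $\frac{\xi}{1-\xi}$ could exceed $1$, so here the reduction to $A^{\complement}$ and the bound $|A^{\complement}| \geqslant \frac{\xi}{1-\xi}|A|$ is what does the work.

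For \eqref{e5.4}, observe that the set $\{v \in [n] : \dist_G(v,A) = 1\}$ is exactly the set of endpoints outside $A$ of edges crossing $A$; since $G$ is $d$-regular, each such vertex is incident to at most $d$ crossing edges, so
\[
\big|\{v\in[n]\colon \dist_G(v,A)=1\}\big| \geqslant \frac{1}{d}\big|\{e\in E_G\colon e\cap A\neq\emptyset \text{ and } e\setminus A\neq\emptyset\}\big| \geqslant \frac{\xi}{1-\xi}\cdot\frac{1}{200}|A|,
\]
using \eqref{e5.3}. The main obstacle, such as it is, is purely bookkeeping: handling the regime $\xi$ close to $1$ (equivalently $|A|$ much smaller than $n$) correctly via complementation, and verifying the numerical inequality $d - 2.1\sqrt{d-1} \geqslant \frac{d}{100}$ at the smallest degrees $d = 3, 4$; there is no conceptual difficulty beyond the Cheeger inequality already quoted in \eqref{e-cheeger-e1}.
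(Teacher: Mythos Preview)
Your proposal is correct and follows exactly the paper's route: both derive \eqref{e5.3} from the Cheeger inequality \eqref{e-cheeger-e1} together with the numerical check $d-2.1\sqrt{d-1}\geqslant 0.01\,d$ (tight at $d=3$), and then obtain \eqref{e5.4} by dividing by~$d$ using regularity. The paper compresses your case split into the one-line observation $\min\{|A|,|A^{\complement}|\}\geqslant \frac{\xi}{1-\xi}|A|$ (from $|A^{\complement}|\geqslant \xi n\geqslant \frac{\xi}{1-\xi}|A|$); your instinct that the regime $\xi>1/2$ needs care is right---your complementation fix does not actually help there since Cheeger still only yields $h(G)\min\{|A|,|A^{\complement}|\}=h(G)|A|$ when $|A|<n/2$---but the paper's one-liner has the same slack in that range, and the lemma is only invoked with $\xi\in\{1/16,1/8\}$.
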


\begin{proof}
Fix $A\subseteq [n]$ with $|A|\leqslant (1-\xi)n$. Then, $\min\big\{|A|,|A^\complement|\big\}\geqslant \frac{\xi}{1-\xi}|A|$ and so, by~\eqref{e-cheeger-e1} and our assumption that $\lambda_2(G)\leqslant2.1\sqrt{d-1}$, we see that
\[ \big| \{e\in E_G\colon e\cap A\neq\emptyset \text{ and } e\setminus A\neq\emptyset\}\big| \geqslant
h(G) \cdot \min\big\{|A|,|A^\complement|\big\}  \geqslant \frac{\xi}{1-\xi} \cdot \frac{d}{200} |A|, \]
where we have used the fact that $d-2.1\sqrt{d-1}\geqslant 0.01 d$ for every integer $d\geqslant 3$; in particular, \eqref{e5.3} is satisfied. Inequality \eqref{e5.4} follows from \eqref{e5.3} and the $d$-regularity of $G$.
\end{proof}

The last result that we will need is the following mild strengthening of part (\hyperref[Part-A-D(a)]{$A$}) of property~$\mathcal{D}(\alpha)$.

\begin{lemma} \label{lemma5.3}
Let $n\geqslant d\geqslant3$ be integers, let\, $0<\alpha\leqslant1$, and let $G\in G(n,d)$ that satisfies property~$\mathcal{D}(\alpha)$. Set
\begin{equation}\label{e5.5}
\ell_*=\ell_*(d):=\Big\lfloor\log_{1+\frac{1}{3\cdot 10^3}}\Big(\frac{5(d-1)}{4}\Big)\Big\rfloor \ \ \ \text{ and } \ \ \
\tilde{\alpha}=\tilde{\alpha}(\alpha,d):= \frac{5\alpha}{4} \Big(\frac{1}{d-1}\Big)^{\ell_*}.
\end{equation}
Then, for every nonempty subset $S$ of\, $[n]$ and every positive integer $\ell$,
\begin{equation}\label{e5.6}
\big|\{v\in [n]\colon \dist_G(v,S)\leqslant \ell\}\big|
\geqslant \min\Big\{\frac{15n}{16},\tilde{\alpha} (d-1)^\ell\,|S|\Big\}.
\end{equation}
\end{lemma}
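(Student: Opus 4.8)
The plan is to extend part (\hyperref[Part-A-D(a)]{$A$}) of property $\mathcal{D}(\alpha)$ past its natural ceiling of $\tfrac{3n}{4}$ by grafting on the expansion supplied by Lemma \ref{lemma5.2}: part (\hyperref[Part-A-D(a)]{$A$}) already propagates balls with growth factor $(d-1)$ per step, but it stalls once the ball has size $\tfrac{3n}{4}$, whereas Lemma \ref{lemma5.2} (whose hypothesis is part (\hyperref[Part-B-D(a)]{$B$}) of $\mathcal{D}(\alpha)$) keeps balls growing---albeit only by a constant factor---all the way up to size $\tfrac{15n}{16}$. Concretely, I would fix a nonempty $S\subseteq[n]$ and a positive integer $\ell$, write $B_j:=B_G(S,j)$ and $b:=1+\tfrac{1}{3\cdot10^3}$, and note that one may assume $|B_\ell|<\tfrac{15n}{16}$, since otherwise \eqref{e5.6} is immediate. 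Then $|B_j|<\tfrac{15n}{16}=(1-\tfrac1{16})n$ for every $0\leqslant j\leqslant\ell$, so applying the vertex-expansion estimate \eqref{e5.4} with $\xi=\tfrac1{16}$ (for which $\tfrac{\xi}{1-\xi}\cdot\tfrac1{200}=\tfrac1{3\cdot10^3}$) to each $B_j$ gives $|B_{j+1}|\geqslant b\,|B_j|$ for $0\leqslant j<\ell$ (as $B_{j+1}$ consists of $B_j$ together with the vertices at distance exactly $1$ from $B_j$), and hence $|B_\ell|\geqslant b^{\ell-k}\,|B_k|$ for every $0\leqslant k\leqslant\ell$. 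It then remains to prove $|B_\ell|\geqslant\tilde\alpha(d-1)^\ell|S|$. I would record three elementary consequences of \eqref{e5.5}: since $\ell_*=\lfloor\log_b(\tfrac{5(d-1)}{4})\rfloor$ and $d\geqslant3$ (so $\tfrac{5(d-1)}{4}\geqslant\tfrac52$), we have $b^{\ell_*+1}>\tfrac{5(d-1)}{4}\geqslant\tfrac52$; dividing by $b$ once and then twice yields $b^{\ell_*}>2.49$ and $b^{\ell_*-1}>\tfrac54$; and $\ell_*\geqslant1$, so $\tilde\alpha=\tfrac{5\alpha}{4}(d-1)^{-\ell_*}<\alpha$.

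Next I would split according to the size of $\alpha(d-1)^\ell|S|$. If $\alpha(d-1)^\ell|S|\leqslant\tfrac{3n}{4}$, then part (\hyperref[Part-A-D(a)]{$A$}) of $\mathcal{D}(\alpha)$ gives $|B_\ell|\geqslant\alpha(d-1)^\ell|S|\geqslant\tilde\alpha(d-1)^\ell|S|$, and we are done. In the remaining case $\alpha(d-1)^\ell|S|>\tfrac{3n}{4}$, let $\ell_1$ be the least nonnegative integer with $\alpha(d-1)^{\ell_1}|S|\geqslant\tfrac{3n}{4}$; then $\ell_1\leqslant\ell$, a short check (separating $\ell_1=0$, where one uses $d\geqslant3$, from $\ell_1\geqslant1$, where one uses the minimality of $\ell_1$) gives $\tfrac{3n}{4}\leqslant\alpha(d-1)^{\ell_1}|S|<\tfrac{3(d-1)}{4}\,n$, and $|B_{\ell_1}|\geqslant\tfrac{3n}{4}$---by part (\hyperref[Part-A-D(a)]{$A$}) of $\mathcal{D}(\alpha)$ at radius $\ell_1$ when $\ell_1\geqslant1$ (the minimum there is then $\tfrac{3n}{4}$), and because $|S|\geqslant\alpha|S|\geqslant\tfrac{3n}{4}$ when $\ell_1=0$.

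With this in place, iterating $|B_{j+1}|\geqslant b|B_j|$ from $\ell_1$ to $\ell$ gives $|B_\ell|\geqslant b^{\ell-\ell_1}\cdot\tfrac{3n}{4}$, while the bound on $\alpha(d-1)^{\ell_1}|S|$ rewrites as $\tilde\alpha(d-1)^\ell|S|=\tfrac54(d-1)^{\ell-\ell_1-\ell_*}\,\alpha(d-1)^{\ell_1}|S|<\tfrac{15n}{16}(d-1)^{\ell-\ell_1-\ell_*+1}$; so it suffices to show $b^{u}\geqslant\tfrac54(d-1)^{u-\ell_*+1}$, where $u:=\ell-\ell_1\geqslant0$. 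I would then observe that necessarily $u\leqslant\ell_*-1$: if $u\geqslant\ell_*$ then $\ell_1+\ell_*\leqslant\ell$, and iterating the one-step estimate from $\ell_1$ to $\ell_1+\ell_*$ would force $|B_{\ell_1+\ell_*}|\geqslant b^{\ell_*}\cdot\tfrac{3n}{4}>2.49\cdot\tfrac{3n}{4}>n$, absurd. Once $u\leqslant\ell_*-1$, the exponent $u-\ell_*+1$ is nonpositive, and since $d-1\geqslant2>b$ we have $(d-1)^{u-\ell_*+1}\leqslant b^{u-\ell_*+1}$, so the required inequality reduces to $b^{\ell_*-1}\geqslant\tfrac54$, which was already noted. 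This proves \eqref{e5.6}.

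I expect the only real difficulty to be arithmetical bookkeeping: the parameter $\xi=\tfrac1{16}$ in Lemma \ref{lemma5.2} must be chosen precisely so that the one-step growth factor is exactly $b=1+\tfrac1{3\cdot10^3}$, and then the threshold $\ell_*$ from \eqref{e5.5} must be seen to be simultaneously large enough that the slow, factor-$b$ growth cannot cross the band $[\tfrac{3n}{4},\tfrac{15n}{16})$ in $\ell_*$ or more steps (this is exactly what rules out $u\geqslant\ell_*$) and small enough that the factor $(d-1)^{\ell_*}$ absorbed into $\tilde\alpha$ is not wasteful; the low-order cases $\ell_1\in\{0,1\}$ also call for a moment's care. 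None of this is conceptually deep, but the constants must interlock exactly.
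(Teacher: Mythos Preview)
Your proof is correct and follows essentially the same two-phase strategy as the paper: use part~(\hyperref[Part-A-D(a)]{$A$}) of $\mathcal{D}(\alpha)$ to grow at rate $d-1$ until the ball reaches $\tfrac{3n}{4}$, then iterate Lemma~\ref{lemma5.2} with $\xi=\tfrac1{16}$ to push on to $\tfrac{15n}{16}$, absorbing the slowdown into the factor $(d-1)^{-\ell_*}$ in $\tilde\alpha$. The only difference is packaging: the paper isolates an intermediate claim (for sets $A$ with $|A|\geqslant\tfrac{3n}{4(d-1)}$, one has $|B_G(A,\ell)|\geqslant\min\{\tfrac{15n}{16},\alpha_1(d-1)^\ell|A|\}$ with $\alpha_1=(b/(d-1))^{\ell_*}$) and then applies it to $A=B_G(S,\ell_1)$, whereas you work in one pass and instead bound $u=\ell-\ell_1\leqslant\ell_*-1$ directly via the size contradiction $b^{\ell_*}\cdot\tfrac{3n}{4}>n$.
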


\begin{proof}
First, observe that by repeatedly applying Lemma \ref{lemma5.2} for $\xi = 1/16$ and invoking the choice of $\tilde{\alpha}$ in \eqref{e5.5}, we have for every nonempty $A\subseteq [n]$ and every positive integer $\ell$,
\begin{equation}\label{e5.7}
|B_G(A,\ell)| \geqslant \min\bigg\{ \frac{15n}{16}, \Big(1 +\frac{1}{3\cdot 10^3} \Big)^\ell |A| \bigg\}.
\end{equation}
(Recall that $B_G(A,\ell)$ denotes the set $\big\{v\in [n]\colon \dist_G(v,A)\leqslant \ell\big\}$.) Set
\[ \alpha_1=\alpha_1(d):= \Big(\frac{1+\frac{1}{3\cdot10^3}}{d-1}\Big)^{\ell_*}. \]

\begin{claim} \label{cl5.4}
For every $A\subseteq [n]$ with $|A|\geqslant \frac{3n}{4(d-1)}$ and every positive integer $\ell$, we have
\begin{equation}\label{e5.8}
|B_G(A, \ell)| \geqslant \min\Big\{ \frac{15n}{16}, \alpha_1(d-1)^\ell |A| \Big\}.
\end{equation}
\end{claim}

\begin{proof}[Proof of Claim \ref{cl5.4}]
Assume, first, that $\big(1 +\frac{1}{3\cdot 10^3} \big)^\ell |A| \leqslant \frac{15n}{16}$; then $\ell\leqslant\ell_*$ and consequently, by~\eqref{e5.7},
\[ |B_G(A,\ell)|\geqslant \Big(1 +\frac{1}{3\cdot 10^3} \Big)^\ell |A| =
\Big( \frac{1 +\frac{1}{3\cdot 10^3}}{d-1} \Big)^\ell (d-1)^\ell |A| \geqslant \alpha_1 (d-1)^\ell |A|, \]
as desired. On the other hand, if $\big(1 +\frac{1}{3\cdot 10^3} \big)^\ell |A| > \frac{15n}{16}$, then, by \eqref{e5.7}, we have $|B_G(A,\ell)|\geqslant \frac{15n}{16}$, and the claim follows.
\end{proof}

We are ready to prove \eqref{e5.6}. Towards this end, fix a nonempty subset $S$ of $[n]$ and a positive integer $\ell$. Notice that if $|S|\geqslant \frac{3n}{4(d-1)}$, then \eqref{e5.6} follows from Claim \ref{cl5.4} and the fact that $\tilde{\alpha}\leqslant\alpha_1$. So, assume that $|S|<\frac{3n}{4(d-1)}<\frac{3n}{4}$. Set
\[ \ell_1:= \min\Big\{\ell\geqslant 1\colon \alpha(d-1)^\ell|S|\geqslant \frac{3n}{4}\Big\}-1, \]
and observe that
\begin{equation}\label{e5.9}
\frac{3}{4(d-1)} n \leqslant \alpha(d-1)^{\ell_1}|S|< \frac{3}{4}n.
\end{equation}
If $\ell\leqslant\ell_1$, then \eqref{e5.6} follows by part (\hyperref[Part-A-D(a)]{$A$}) of property~$\mathcal{D}(\alpha)$, inequality \eqref{e5.9} and the fact that $\tilde{\alpha}\leqslant\alpha$. Finally, assume that $\ell>\ell_1$, and set $A:=B_G(S,\ell_1)$. By \eqref{e5.9} and (\hyperref[Part-A-D(a)]{$A$}) of property~$\mathcal{D}(\alpha)$, we have
\[ |A|\geqslant\frac{3n}{4(d-1)}> \frac{\alpha}{d-1}(d-1)^{\ell_1}|S|. \]
Therefore, by Claim \ref{cl5.4}, we conclude that
\[ |B_G(S,\ell)| = |B_G(A,\ell-\ell_1)| \geqslant
\min\Big\{ \frac{15n}{16}, \alpha_1(d-1)^{\ell-\ell_1} |A| \Big\} \geqslant
\min\Big\{ \frac{15n}{16}, \tilde{\alpha}(d-1)^{\ell} |A| \Big\}. \qedhere \]
\end{proof}

\subsection{Completion of the proof} \label{subsec5.2}

For the rest of this section, fix $d,\Delta, \alpha,\varepsilon, n, m, G$ and $H$. We also fix $f\colon [n]\to[m]$ such that $|M_2(f,\varepsilon)|\geqslant\frac{n}{8}$. Notice that if $M_0(f,\varepsilon)=\emptyset$, then, by Definition \ref{def-deco}, we obtain that $|\mathrm{Im}(f)|\leqslant m^{1-2\varepsilon}\leqslant m^{1-\varepsilon}$. Since $10746/\varepsilon\leqslant \Gamma_2$, by Fact \ref{fact5.1}, we see that \eqref{e11} is satisfied. Thus, we may assume that $M_0(f,\varepsilon)\neq \emptyset$ in what follows.

As previously noted, the set $M_0(f,\varepsilon)$ is the portion of the domain of $f$ on which $f$ is essentially constant. Note, however, that the cardinality of the set $f^{-1}\big(f(v)\big)$ may fluctuate as $v$ varies over $M_0(f,\varepsilon)$. This motivates the following
dyadic decomposition of $M_0(f,\varepsilon)$.

\begin{definition}[Dyadic decomposition] \label{def-dyadic}
For every $r\in\mathbb{Z}$, set
\begin{equation}\label{e5.10}
M_{0,r} := \Big\{ v\in M_0(f,\varepsilon)\colon 2^{r-1} \leqslant \big|f^{-1}\big(f(v)\big)\big| <2^r\Big\};
\end{equation}
note that $M_{0,r} = \emptyset$ if $r\leqslant 0 $ or if $r> \log_2(m^{2\varepsilon-1}\,n)+1$. We also set
\begin{equation}\label{e5.11}
r^* := \max\big\{r\colon M_{0,r} \neq \emptyset\big\}.
\end{equation}
Finally, define
\begin{equation} \label{e5.12}
M_0':= \bigcup_{\substack{r^* - 2 \log_2 m \leqslant r \leqslant r^* \\ |f(M_{0,r})| \geqslant m^{1-2\varepsilon}}} M_{0,r}
\ \ \ \ \ \text{ and } \ \ \ \ \  M_0'':= \bigcup_{r < r^* - 2 \log_2 m} M_{0,r}.
\end{equation}
Observe that $M_0'\cup M_0''\subseteq M_0(f,\varepsilon)=M_{0,1}\cup \cdots \cup M_{0,r^*}$.
\end{definition}

We also introduce the following definition.

\begin{definition}[Random compression] \label{def:compression}
We define the (deterministic) function $\hat f\colon [n] \to [m]$ by
\begin{equation} \label{eq-hat(f)}
\hat f(v) :=
\begin{cases}
1 & v \in M_0'', \\
f(v) & v\in [n] \setminus M_0''.
\end{cases}
\end{equation}
Notice that $\hat f = f$ if $M_0''=\emptyset$. We also define the \emph{random compression} $\boldsymbol{f}\colon [n]\to[m]$ of $f$ as follows. We select uniformly at random a vertex $\mathbf{v}$ from $M_0'$, and we set
\begin{equation} \label{eq-random(f)}
\boldsymbol{f}(v) :=
\begin{cases}
\hat f(\mathbf{v}) & v \in M_0', \\
\hat f(v) & v\in [n] \setminus M_0'.
\end{cases}
\end{equation}
Again observe that if $M_0'=\emptyset$, then $\boldsymbol{f}$ deterministically equals to $\hat f$.
\end{definition}

In other words, the deterministic function $\hat{f}$ is obtained by compressing the image $f(M_0'')$ of the set $M_0''$ into the singleton $\{1\}$. Subsequently, the random compression $\boldsymbol{f}$ is defined by selecting a vertex $\mathbf{v}\in M_0'$ uniformly at random, and then compressing the image $f(M_0')$ of the set $M_0'$ into the singleton $\{f(\mathbf{v})\}$. Crucially, this construction allows us to leverage the following.

\begin{fact} \label{obs5.7}
There exists a positive integer $m_2=m_2(\varepsilon)$ such that for every integer $m\geqslant m_2$, we have, almost everywhere,
\begin{equation} \label{e5.13}
\frac{1}{n^2} \sum_{v,u\in [n]} \mathrm{dist}_H\big(\boldsymbol{f}(v), \boldsymbol{f}(u)\big) \leqslant
\frac{10746}{\varepsilon} \cdot
\frac{1}{|E_G|} \sum_{\{v,u\}\in E_G} \mathrm{dist}_H\big(\boldsymbol{f}(v), \boldsymbol{f}(u)\big).
\end{equation}
\end{fact}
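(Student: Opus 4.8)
The plan is to obtain \eqref{e5.13} as an immediate consequence of inequality \eqref{e5.2} of Fact \ref{fact5.1}, applied with the compressed map $\boldsymbol{f}$ in the role of $f$. Since $G$ has been fixed so as to satisfy property $\mathcal{D}(\alpha)$, inequality \eqref{e5.2}---with precisely the constant $10746/\varepsilon$---is available for any function $[n]\to[m]$ whose image has size at most $m^{1-\varepsilon}$. Thus everything reduces to verifying the bound $|\mathrm{Im}(\boldsymbol{f})|\leqslant m^{1-\varepsilon}$, and I would note that this bound holds for \emph{every} choice of the random vertex $\mathbf{v}\in M_0'$ (when $M_0'=\emptyset$ one has $\boldsymbol{f}=\hat f$ and the same reasoning applies), so the conclusion in fact holds surely, not merely almost everywhere.

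Unwinding Definition \ref{def:compression}, and using that $M_0'$ and $M_0''$ are disjoint and that $\mathbf{v}\in M_0'$ forces $\hat f(\mathbf{v})=f(\mathbf{v})$, one sees that
\[ \mathrm{Im}(\boldsymbol{f}) \subseteq \{1,\, f(\mathbf{v})\} \cup f\big([n]\setminus(M_0'\cup M_0'')\big). \]
I would then split $[n]\setminus(M_0'\cup M_0'')$ into $M_2(f,\varepsilon)$, $M_1(f,\varepsilon)$, and the leftover $N := M_0(f,\varepsilon)\setminus(M_0'\cup M_0'')$, and bound each image separately. By \eqref{e6} and \eqref{e7}, every point of $f(M_2(f,\varepsilon))$ (resp.\ $f(M_1(f,\varepsilon))$) has preimage of size exceeding $(n/m)m^{4\varepsilon}$ (resp.\ $(n/m)m^{2\varepsilon}$), whence $|f(M_2(f,\varepsilon))| < m^{1-4\varepsilon}$ and $|f(M_1(f,\varepsilon))| < m^{1-2\varepsilon}$. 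For $N$, the dyadic decomposition of Definition \ref{def-dyadic} is used: the levels $M_{0,r}$ with $r < r^* - 2\log_2 m$ lie inside $M_0''$, and those with $r$ in the window $r^*-2\log_2 m\leqslant r\leqslant r^*$ and $|f(M_{0,r})|\geqslant m^{1-2\varepsilon}$ lie inside $M_0'$ (while levels with $r>r^*$ are empty); hence $N$ is a union of at most $2\log_2 m+1$ levels, each satisfying $|f(M_{0,r})| < m^{1-2\varepsilon}$, so that $|f(N)| \leqslant (2\log_2 m + 1)\, m^{1-2\varepsilon}$.

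Putting these estimates together gives $|\mathrm{Im}(\boldsymbol{f})| \leqslant 2 + m^{1-4\varepsilon} + (2\log_2 m + 2)\, m^{1-2\varepsilon}$, and since $\log_2 m = o(m^{\varepsilon})$ this is at most $m^{1-\varepsilon}$ once $m\geqslant m_2(\varepsilon)$ for a suitable threshold $m_2=m_2(\varepsilon)$. Feeding $\boldsymbol{f}$ into \eqref{e5.2} then yields \eqref{e5.13}. No genuine difficulty arises here; the only point requiring care is the bookkeeping of which dyadic levels get swept into $M_0'$, into $M_0''$, or into the leftover set $N$, together with the observation that the polylogarithmic overhead $(2\log_2 m+2)\,m^{1-2\varepsilon}$ is comfortably absorbed by the gap between the exponents $1-2\varepsilon$ and $1-\varepsilon$.
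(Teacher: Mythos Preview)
Your proposal is correct and follows essentially the same approach as the paper: bound $|\mathrm{Im}(\boldsymbol{f})|\leqslant m^{1-\varepsilon}$ by decomposing the image into the two compressed points, the leftover dyadic levels of $M_0(f,\varepsilon)$ (each contributing at most $m^{1-2\varepsilon}$), and $f(M_1\cup M_2)$, then invoke Fact~\ref{fact5.1}. The only cosmetic difference is that the paper bounds $|f(M_1(f,\varepsilon)\cup M_2(f,\varepsilon))|\leqslant m^{1-2\varepsilon}$ in one stroke rather than treating $M_1$ and $M_2$ separately.
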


\begin{proof}
Set $m_2(\varepsilon):= \big\lceil (4+2\log_2 m)^{\frac{1}{\varepsilon}}\big\rceil$. If $m\geqslant m_2(\varepsilon)$, then, since $\Delta\geqslant3$, it holds almost everywhere,
\begin{align*}
\big|\mathrm{Im}(\boldsymbol{f})\big| & \leqslant 2 + \big|f\big(M_0(f,\varepsilon)\setminus (M_0''\cup M_0')\big)\big| +
\big| f\big(M_1(f,\varepsilon)\cup M_2(f,\varepsilon)\big)\big| \\
& \leqslant 2 + (2\log_2 m+1)m^{1-2\varepsilon} + m^{1-2\varepsilon} \leqslant m^{1-\varepsilon}.
\end{align*}
Therefore, inequality \eqref{e5.13} follows from Fact \ref{fact5.1}.
\end{proof}

The following lemma is the last piece needed to complete the proof of Proposition \ref{large-M2}.

\begin{lemma}[Main lemma] \label{l5.8}
Let the notation and assumptions be as in Subsection \ref{subsec5.2}, and set
\begin{equation}\label{e5.14}
\eta=\eta(d,\alpha,\varepsilon):= \frac{\tilde{\alpha}\, \varepsilon}{2^{8}\, 3^2\, 10^3},
\end{equation}
where $\tilde{\alpha}$ is as in \eqref{e5.5}. There exists a positive integer $m_3=m_3(d,\alpha,\varepsilon)$ such that for every integer $m\geqslant m_3$, we have, almost everywhere,
\begin{equation}\label{e5.15}
\sum_{\{v,u\}\in E_G}\mathrm{dist}_H\big( \boldsymbol{f}(v), \boldsymbol{f}(u)\big) \leqslant
\frac{4d}{\eta} \sum_{\{v,u\}\in E_G}\mathrm{dist}_H\big( f(v), f(u) \big)
\end{equation}
and, on expectation,
\begin{equation}\label{e5.16}
\mathbb{E}\bigg[ \sum_{v,u\in [n]} \mathrm{dist}_H\big( \boldsymbol{f}(v), \boldsymbol{f}(u)  \big) \bigg] \geqslant
\frac{1}{16} \sum_{v,u\in [n]} \mathrm{dist}_H\big( f(v), f(u) \big).
\end{equation}
\end{lemma}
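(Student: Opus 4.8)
The plan is to establish \eqref{e5.15} and \eqref{e5.16} by two separate arguments resting on a common ``spreading'' estimate, with \eqref{e5.15} drawing on the vertex expansion of $G$ (Lemma~\ref{lemma5.3}) and \eqref{e5.16} on the hypothesis $|M_2(f,\varepsilon)|\geqslant n/8$; throughout I abbreviate $M_i=M_i(f,\varepsilon)$. I would first record three elementary bounds. Since $M_{0,r}=\emptyset$ for $r>\log_2(m^{2\varepsilon-1}n)+1$ and every $v\in M_{0,r}$ has $|f^{-1}(f(v))|\geqslant 2^{r-1}$, summing $|M_{0,r}|\leqslant|f(M_{0,r})|\,2^r\leqslant m\,2^r$ over $r<r^*-2\log_2 m$ gives $|M_0''|\leqslant 2^{r^*}/m\leqslant 2|M_{0,r^*}|/m\leqslant 2|M_0|/m$. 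Next, the value sets $f(M_0)$ and $f(M_2)$ are disjoint, so $\sum_{v,u\in[n]}\mathrm{dist}_H(f(v),f(u))\geqslant|M_2||M_0|\geqslant n|M_0|/8$. Finally, each cluster $f^{-1}(y)$ with $y\in f(M_0)$ has size $\leqslant(n/m)m^{2\varepsilon}\leqslant n/2$, so by \eqref{e-cheeger-e1} and part (\hyperref[Part-B-D(a)]{$B$}) of $\mathcal D(\alpha)$ it has at least $\tfrac d{200}|f^{-1}(y)|$ boundary edges; hence $G$ has at least $\tfrac d{400}|M_0|$ bichromatic edges and $\sum_{\{v,u\}\in E_G}\mathrm{dist}_H(f(v),f(u))\geqslant\tfrac d{400}|M_0|$. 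With these in hand the degenerate case $M_0'=\emptyset$ is immediate ($\boldsymbol f=\hat f$ is deterministic and, as $|M_0''|\leqslant 2|M_0|/m$, both inequalities follow once $m$ is large), so I may assume $M_0'\neq\emptyset$, and then $|M_0'|\geqslant m^{1-2\varepsilon}$ since each dyadic block in $M_0'$ has image of size $\geqslant m^{1-2\varepsilon}$.

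The analytic core is a spreading estimate: for every $x\in[m]$ and every integer $0<\ell\leqslant(1-3\varepsilon)\log_{\Delta-1}m$, all but an $o(1)$-fraction of $M_0'$ is sent by $f$ outside $B_H(x,\ell)$ (for $m$ large). Indeed, within a dyadic block $M_{0,r}\subseteq M_0'$ one has $|M_{0,r}\cap f^{-1}(B_H(x,\ell))|\leqslant|B_H(x,\ell)|\cdot 2^r$ while $|M_{0,r}|\geqslant|f(M_{0,r})|\,2^{r-1}\geqslant m^{1-2\varepsilon}2^{r-1}$, and $|B_H(x,\ell)|\leqslant 4(\Delta-1)^\ell$ is $o(m^{1-2\varepsilon})$ in the stated range of $\ell$, so the ratio is $o(1)$; summing over blocks gives the claim. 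Consequently, if $\mathbf v$ is uniform on $M_0'$ then $\mathbb E[\mathrm{dist}_H(f(\mathbf v),x)]\geqslant(1-4\varepsilon)\log_{\Delta-1}m\geqslant\tfrac14\,\mathrm{diam}(H)$ for every $x$ (using part (\hyperref[Part-B-R(e)]{$B$}) of $\mathcal R(\varepsilon)$); equivalently $\sum_{u\in M_0'}\mathrm{dist}_H(f(v),f(u))\geqslant\tfrac14|M_0'|\,\mathrm{diam}(H)$ for every $v\in[n]$.

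For \eqref{e5.16} I would set $W:=[n]\setminus(M_0'\cup M_0'')$, so that $\boldsymbol f=f$ on $W$; since $M_2\subseteq W$ this gives $|W|\geqslant n/8$ and $|M_0'|\leqslant|M_0|\leqslant 7n/8\leqslant 7|W|$. Splitting the sum over $[n]\times[n]$ by membership in $W,M_0',M_0''$: the $W$--$W$ part is preserved identically; by the exact identity $\mathbb E\big[\sum_{u\in M_0'}\mathrm{dist}_H(\boldsymbol f(v),\boldsymbol f(u))\big]=\sum_{u\in M_0'}\mathrm{dist}_H(f(v),f(u))$ for $v\in W$, the $W$--$M_0'$ part is preserved in expectation; the $M_0'$--$M_0'$ part drops to $0$, but by the spreading estimate $\sum_{v,u\in M_0'}\mathrm{dist}_H(f(v),f(u))\leqslant|M_0'|^2\mathrm{diam}(H)\leqslant 7|W||M_0'|\mathrm{diam}(H)\leqslant 28\sum_{v\in W,\,u\in M_0'}\mathrm{dist}_H(f(v),f(u))$; and the parts meeting $M_0''$ total at most $3|M_0''|n\,\mathrm{diam}(H)\leqslant 6|M_0|n\,\mathrm{diam}(H)/m$, negligible against the total pair sum $\geqslant n|M_0|/8$. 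Combining, $\mathbb E[\sum_{v,u}\mathrm{dist}_H(\boldsymbol f(v),\boldsymbol f(u))]\geqslant\tfrac1{15}(1-o(1))\sum_{v,u}\mathrm{dist}_H(f(v),f(u))$, which is $\geqslant\tfrac1{16}\sum_{v,u}\mathrm{dist}_H(f(v),f(u))$ for $m\geqslant m_3(d,\alpha,\varepsilon)$.

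For \eqref{e5.15}, the triangle inequality applied at both endpoints of each edge gives $\sum_{\{v,u\}\in E_G}\mathrm{dist}_H(\boldsymbol f(v),\boldsymbol f(u))\leqslant\sum_{\{v,u\}\in E_G}\mathrm{dist}_H(f(v),f(u))+d\sum_{v\in M_0'\cup M_0''}\mathrm{dist}_H(\boldsymbol f(v),f(v))$, and the last sum is $\leqslant|M_0'\cup M_0''|\,\mathrm{diam}(H)\leqslant 3|M_0'\cup M_0''|\log_{\Delta-1}m$; so it is enough to prove $\sum_{\{v,u\}\in E_G}\mathrm{dist}_H(f(v),f(u))\gtrsim\eta\,|M_0'\cup M_0''|\log_{\Delta-1}m$. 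The $M_0''$ part follows from the bichromatic-edge bound of the first paragraph together with $|M_0''|\leqslant 2|M_0|/m$. The remaining estimate $\sum_{\{v,u\}\in E_G}\mathrm{dist}_H(f(v),f(u))\gtrsim\eta|M_0'|\log_{\Delta-1}m$ is the main obstacle, and it is here that property $\mathcal D(\alpha)$ is indispensable: for $v\in M_0'$, Lemma~\ref{lemma5.3} with $S=\{v\}$ forces $|B_G(v,\ell)|\geqslant 15n/16$ for $\ell=\lceil\log_{d-1}(15n/(16\tilde\alpha))\rceil$, hence $|f(B_G(v,\ell))|\geqslant|f(B_G(v,\ell)\cap M_0)|\geqslant(n/16)/((n/m)m^{2\varepsilon})=m^{1-2\varepsilon}/16$, a set too large to lie in any $H$-ball of radius $(1-3\varepsilon)\log_{\Delta-1}m$ --- so expanding a small cluster in $G$ must accumulate $\gtrsim\log_{\Delta-1}m$ of $H$-distance. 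I would convert this into the claimed lower bound by a co-area/cut estimate: for centres $y\in f(M_0')$ lying $H$-distance $\gtrsim\varepsilon\log_{\Delta-1}m$ from $f(M_2)$, the level sets $S_t=f^{-1}(B_H(y,t))$ stay below $n/2$ for the relevant range of $t$ (their elements having preimages of size $\leqslant(n/m)m^{4\varepsilon}$), so Lemma~\ref{lemma5.2} gives $e_G(S_t,S_t^\complement)\geqslant\tfrac d{200}|S_t|\geqslant\tfrac d{200}|f^{-1}(y)|$, and one sums these cuts over $t$ and over the centres. The delicate point --- and the step I expect to be hardest --- is to organise the centres (one dyadic block at a time, at a carefully chosen separation scale) so that collectively they retain a proportion of the mass of $M_0'$ bounded below independently of $m$ while each edge is overcharged by at most a bounded factor; balancing the competing demands of a separation scale of order $\log_{\Delta-1}m$ (to harvest the full logarithmic factor) against the $(\Delta-1)^\ell$ loss incurred by the ball geometry is precisely where the quantitative form of $\mathcal D(\alpha)$ must be used to the hilt.
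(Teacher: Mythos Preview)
Your treatment of \eqref{e5.16} is correct and close to the paper's: the paper routes through $\hat f$ and bounds the $M_0'\times M_0'$ loss by a triangle-inequality rerouting through $[n]\setminus M_0'$ (using $|M_0'|/(n-|M_0'|)\leqslant 7$), whereas you work directly with $W=[n]\setminus(M_0'\cup M_0'')$ and bound that loss via your spreading estimate; both yield the factor $1/16$.

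The gap is in \eqref{e5.15}. You correctly reduce to $\sum_{\{v,u\}\in E_G}\mathrm{dist}_H(f(v),f(u))\gtrsim\eta\,|M_0'|\log_{\Delta-1}m$ (the paper's Claim~5.9), but your multi-centre co-area scheme cannot close as written. To keep the per-edge overcharging bounded you need the centres $2T$-separated in $H$; a $2T$-net in $f(M_{0,r})$ then carries at most an $O\big((\Delta-1)^{-2T}\big)$ fraction of the block's mass whenever $|f(M_{0,r})|$ is near its maximum, so taking $T$ of order $\log_{\Delta-1}m$ to harvest the logarithm loses a factor $m^{\Theta(\varepsilon)}$ in mass, while taking $T=O(1)$ recovers the mass but forfeits the logarithm. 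This trade-off lives entirely in the geometry of $H$, so invoking property $\mathcal{D}(\alpha)$ of $G$ does not resolve it. (Separately, your step $|B_G(v,\ell)\cap M_0|\geqslant n/16$ is unjustified: from $|B_G(v,\ell)|\geqslant 15n/16$ one only gets $|B_G(v,\ell)\cap M_0|\geqslant|M_0|-n/16$, which need not be positive.)

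The paper's argument for the key estimate is structurally different. Assuming the contrary, it expands outward from $M_0'$ in $G$ layer by layer, setting $T_k=\{v:\mathrm{dist}_G(v,M_0')=k\}$ and declaring an edge between $T_{k-1}$ and $T_k$ \emph{atypical} if its $H$-length exceeds $\frac{\varepsilon}{6}(d-1)^{-k/2}\log_{\Delta-1}m$. Markov's inequality against the contradicted edge sum bounds the atypical edges at level $k$ by $O\big((d-1)^{k/2}|M_0'|\big)$, while property $\mathcal{D}(\alpha)$ (via Lemma~\ref{lemma5.3}) forces $|T_k|\gtrsim\tilde\alpha(d-1)^{k}|M_0'|$; since $(d-1)^{k/2}=o\big((d-1)^k\big)$, almost all of each $T_k$ is reachable from $M_0'$ by typical paths of total $H$-length at most $\sum_{k\geqslant 1}\frac{\varepsilon}{6}(d-1)^{-k/2}\log_{\Delta-1}m\leqslant\frac{\varepsilon}{2}\log_{\Delta-1}m$. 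Once some $T_{k_*}$ meets $M_2$ substantially, this forces a large $A\subseteq M_0'$ with $f(A)\subseteq B_H\big(f(M_2),\tfrac{\varepsilon}{2}\log_{\Delta-1}m\big)$; but $|f(M_2)|\leqslant m^{1-4\varepsilon}$ while $|f(A\cap M_{0,r})|\gtrsim m^{1-2\varepsilon}$ for some block, contradicting the $H$-ball volume bound. The balancing you anticipated \emph{is} the heart of the proof, but it is carried out in $G$ (geometric threshold $(d-1)^{-k/2}$ against the $(d-1)^k$ vertex growth from $\mathcal{D}(\alpha)$), not by packing centres in $H$.
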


Before proving this key technical estimate, let us complete the proof of Proposition \ref{large-M2}. By Fact~\ref{obs5.7} and Lemma \ref{l5.8}, if $m$ is sufficiently large in terms of $d$ and $\varepsilon$,~then
\begin{align*}
\frac{1}{n^2} \sum_{v,u\in [n]} \mathrm{dist}_H\big( f(v), f(u) \big) & \stackrel{\eqref{e5.16}}{\leqslant}
\frac{16}{n^2} \, \mathbb{E}\bigg[ \sum_{v,u\in [n]} \mathrm{dist}_H\big( \boldsymbol{f}(v), \boldsymbol{f}(u)  \big) \bigg] \\
& \stackrel{\eqref{e5.13}}{\leqslant} \frac{16\cdot 10746}{\varepsilon} \cdot \frac{1}{|E_G|}\,
\mathbb{E}\bigg[\sum_{\{v,u\}\in E_G} \mathrm{dist}_H\big(\boldsymbol{f}(v), \boldsymbol{f}(u)\big) \bigg] \\
& \stackrel{\eqref{e5.15}}{\leqslant}  \frac{4^3\cdot 10746\cdot d}{\eta\varepsilon} \cdot \frac{1}{|E_G|}\,
\sum_{\{v,u\}\in E_G}\mathrm{dist}_H\big( f(v), f(u) \big) \\
& \ \leqslant \Gamma_2\, \frac{1}{|E_G|}\, \sum_{\{v,u\}\in E_G}\mathrm{dist}_H\big( f(v), f(u) \big),
\end{align*}
where the last inequality follows from the choice of $\Gamma_2$ in \eqref{e10}, and the choices of $\tilde{\alpha}$ and $\eta$ in \eqref{e5.5} and \eqref{e5.14}, respectively.

\subsection{Proof of Lemma \ref{l5.8}} \label{subsec5.3}

The proof is decomposed into two parts. The first relies crucially on part~(\hyperref[Part-A-D(a)]{$A$}) of property~$\mathcal{D}(\alpha)$.

\begin{claim} \label{claim5.9}
There exists a positive integer $m_4=m_4(d,\alpha,\varepsilon)$ such that for every integer $m\geqslant m_4$,
\begin{equation} \label{e5.17}
\sum_{\{v,u\}\in E_G} \mathrm{dist}_H \big( f(v), f(u)\big) \geqslant \eta\, |M_0'|\, \log_{\Delta-1}m,
\end{equation}
where $\eta$ is as in \eqref{e5.14}.
\end{claim}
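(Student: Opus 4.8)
The plan is to show that each dyadic block $M_{0,r}$ contributing to $M_0'$ already forces a lot of edge-length in the graph $G$, and then sum these contributions. Fix $r$ in the range $r^* - 2\log_2 m \leqslant r \leqslant r^*$ with $|f(M_{0,r})| \geqslant m^{1-2\varepsilon}$. The set $f(M_{0,r})$ consists of at least $m^{1-2\varepsilon}$ points in $[m]$, each with $f$-preimage of size between $2^{r-1}$ and $2^r$. The crucial observation is that, by part (\hyperref[Part-B-R(e)]{$B$}) of property $\mathcal{R}(\varepsilon)$, a ball of radius $\rho := \lfloor \varepsilon \log_{\Delta-1} m\rfloor$ in $H$ has at most $\Delta^{\rho+1} \leqslant m^{\varepsilon} \cdot \Delta$ vertices, so any single vertex $x \in V_H$ is $\rho$-close to at most $m^{\varepsilon}\Delta \ll m^{1-2\varepsilon}$ of the points of $f(M_{0,r})$ (for $m$ large in terms of $\Delta$, hence in terms of $d,\varepsilon$ since $\Delta$ will be absorbed—actually here we should be careful: the bound $m_4$ is allowed to depend on $d,\alpha,\varepsilon$ only, so we need the argument to be uniform in $\Delta$; note $\Delta^{\rho+1}$ with $\rho = \lfloor\varepsilon\log_{\Delta-1}m\rfloor$ gives $(\Delta-1)^{\rho}\cdot\frac{\Delta^{\rho+1}}{(\Delta-1)^\rho} \leqslant m^{\varepsilon}\cdot \Delta \cdot 2^{\rho}$, so more precisely $\Delta^{\rho+1} \leqslant (\Delta-1)^{\rho}(1+\tfrac{1}{\Delta-1})^\rho\Delta \leqslant m^{\varepsilon}e^{\rho/(\Delta-1)}\Delta \leqslant m^{2\varepsilon}$ once $m$ is large, uniformly in $\Delta \geqslant 3$). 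Consequently, in the set $M_{0,r}$, most vertices $v$ have the property that $f(v)$ is at distance $> \rho$ from $f(v')$ for "most" other $v' \in M_{0,r}$.

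Next I would lower-bound the edge contribution. The idea is to use part (\hyperref[Part-A-D(a)]{$A$}) of property $\mathcal{D}(\alpha)$ — or rather its strengthening Lemma \ref{lemma5.3} — applied to the set $M_{0,r}$ inside $G$: its $\ell$-neighborhood grows geometrically until it fills a constant fraction of $[n]$. Taking $\ell$ of order $\log_{d-1}(n/|M_{0,r}|)$, we find a large fraction of $[n]$ within distance $\ell$ of $M_{0,r}$. But I want edge-length, not just vertex expansion. The cleaner route: decompose $E_G$ into level sets according to $\dist_G(\cdot, M_{0,r})$, and observe that each "shell" at distance $j$ from $M_{0,r}$ has size comparable (up to factor $d-1$) to the shell at distance $j-1$ by Lemma \ref{lemma5.2}/\ref{lemma5.3}; meanwhile, a vertex $v \in M_{0,r}$ and a vertex $u$ many $G$-steps away from $M_{0,r}$ will typically have $f(v), f(u)$ at $H$-distance $\gtrsim \log_{\Delta-1} m$, simply because $f(u)$ cannot be close (in $H$) to more than $m^{2\varepsilon}$ points, whereas $f(M_{0,r})$ has $\geqslant m^{1-2\varepsilon}$ points. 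Summing the triangle-inequality / telescoping bound $\dist_H(f(v),f(u)) \leqslant \sum_{\text{edges on a }G\text{-path}} \dist_H(f(\cdot),f(\cdot))$ over a well-chosen family of paths from $M_{0,r}$ to far vertices, and using that most endpoint pairs have $\dist_H \gtrsim \varepsilon\log_{\Delta-1}m$, yields $\sum_{\{v,u\}\in E_G}\dist_H(f(v),f(u)) \gtrsim_{d,\alpha} |M_{0,r}| \cdot \varepsilon \log_{\Delta-1} m$ with the $\tilde\alpha$-factor coming from the expansion constant in Lemma \ref{lemma5.3}. Since the $M_{0,r}$ comprising $M_0'$ are disjoint and there are at most $O(\log m)$ of them — wait, that would lose a $\log m$ — so instead I would argue directly for the single index $r$ realizing the largest $|M_{0,r}|$ among those in $M_0'$, getting $\gtrsim |M_0'|/\log m$, and absorb the $\log m$; but that is too lossy for the stated bound with no $\log m$.

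Reconsidering: the right move is to run the path/telescoping argument simultaneously for all of $M_0'$ at once rather than block-by-block, i.e. apply Lemma \ref{lemma5.3} to $S = M_0'$ itself. Every $v \in M_0'$ has $f(v)$ lying in a set $f(M_0')$ which, since it contains some block $M_{0,r}$ with $|f(M_{0,r})| \geqslant m^{1-2\varepsilon}$, still has $\geqslant m^{1-2\varepsilon}$ image points; so again a far-away vertex $u$ in $G$ has $f(u)$ at $H$-distance $> \rho$ from $f(v)$ for all but an $m^{2\varepsilon-1}$-fraction of $v \in M_0'$. Then: build for each $v \in M_0'$ a geodesic in $G$ of length $\ell_v \asymp \log_{d-1}(n/|B_G(M_0',j)|)$ reaching the expansion threshold, charge $\dist_H(f(v), f(\text{endpoint})) \geqslant \rho - (\text{small}) \gtrsim \varepsilon \log_{\Delta-1} m$ to the edges of that geodesic, count each edge $O_{d}(1)$ times by the geometric growth of shells (this is where $\tilde\alpha$, equivalently $\eta$, enters through the ratio-$(1+\tfrac{1}{3\cdot 10^3})$ growth in \eqref{e5.7}), and sum. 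The bookkeeping gives exactly $\sum_{\{v,u\}\in E_G}\dist_H(f(v),f(u)) \geqslant \eta |M_0'|\log_{\Delta-1}m$ with $\eta$ as in \eqref{e5.14}, the numerical factor $2^8 3^2 10^3$ tracking the loss from $\xi = 1/16$, the factor $d$ in $|B_G(\cdot,1)|$, and the constant in Lemma \ref{lemma5.2}. The main obstacle is precisely this simultaneous charging argument — making sure each edge of $G$ is used boundedly often across all the geodesics while still capturing the $\log_{\Delta-1}m$ length from $\Theta(|M_0'|)$ of the vertices — and getting the constants to land on $\eta$ without an extraneous $\log m$; the resolution is that the shells $B_G(M_0', j) \setminus B_G(M_0', j-1)$ grow by a factor $\geqslant 1+\tfrac{1}{3\cdot 10^3}$ per step (Lemma \ref{lemma5.2} with $\xi = 1/16$), so a uniformly-random geodesic from each $v$ spends $O_d(1)$ expected edges in each shell, and linearity of expectation over the random path choices converts this into the deterministic edge-count bound.
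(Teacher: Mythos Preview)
Your sketch has the right geometric picture (shells around $M_0'$, expansion via Lemma~\ref{lemma5.3}, comparing the large image $|f(M_{0,r})|\geqslant m^{1-2\varepsilon}$ to small $H$-balls), but it is missing the one ingredient that actually makes the argument close: the set $M_2(f,\varepsilon)$. Your geodesics run from each $v\in M_0'$ to ``the expansion threshold'', i.e.\ to a generic vertex $u$ far from $M_0'$ in $G$, and you then assert $\dist_H(f(v),f(u))\gtrsim \varepsilon\log_{\Delta-1}m$. There is no reason for this: the ball-counting fact you quote says that a \emph{fixed} $u$ has $f(u)$ close to at most $m^{2\varepsilon}$ image points, but your endpoint $u=u_v$ varies with $v$, so nothing prevents each $f(u_v)$ from sitting right next to $f(v)$. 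To salvage the $H$-distance step you would need the endpoints to lie in a set $U$ with $|f(U)|\ll m^{1-4\varepsilon}$; to salvage your charging step (``each edge used $O_d(1)$ times'') you would need $U$ to be large in $G$, since routing $|M_0'|$ paths to a small target set forces congestion $\Omega(|M_0'|/d)$ on some edge, not $O_d(1)$. The only set in sight that is simultaneously large in $G$ and has small $f$-image is $M_2(f,\varepsilon)$ (size $\geqslant n/8$, image $\leqslant m^{1-4\varepsilon}$), and this is exactly what the paper exploits.

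The paper also avoids your direct charging argument altogether: it argues by contradiction, assuming $\sum_{e}\dist_H(f(e))<\eta|M_0'|\log_{\Delta-1}m$, and then applies Markov's inequality in each shell $E_k$ with a \emph{level-dependent} threshold $\frac{\varepsilon}{6(d-1)^{k/2}}\log_{\Delta-1}m$ to bound the number of ``atypical'' edges. The $(d-1)^{-k/2}$ decay is chosen so that (i) the geometric sum of thresholds along any typical path is $\leqslant\frac{\varepsilon}{2}\log_{\Delta-1}m$, and (ii) the atypical edges block at most a $2^{-5}$-fraction of each shell $T_k$. Since $|M_2|\geqslant n/8$, some shell $T_{k_*}$ has $|M_2\cap T_{k_*}^{\mathrm{Typ}}|\geqslant 2^{-5}|T_{k_*}|$; pulling back along typical paths gives a set $A\subseteq M_0'$ with $|A|\gtrsim_{d,\alpha}|M_0'|$ and $f(A)\subseteq B_H\big(f(M_2),\frac{\varepsilon}{2}\log_{\Delta-1}m\big)$. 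Now the block structure of $M_0'$ forces $|f(A)|\gtrsim m^{1-2\varepsilon}$, whereas the right-hand side has at most $|f(M_2)|\cdot 3m^{\varepsilon/2}\leqslant 3m^{1-7\varepsilon/2}$ points, a contradiction for large~$m$. Your ``random geodesic spends $O_d(1)$ edges per shell'' heuristic does not substitute for this: it controls only the \emph{average} congestion per shell, not the maximum, and the lower bound $\sum_e \dist_H(f(e))\geqslant (\max_e c(e))^{-1}\sum_v \dist_H(f(v),f(u_v))$ needs the maximum.
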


\begin{proof}[Proof of Claim \ref{claim5.9}]
Notice that if $M_0'=\emptyset$, then the statement is trivial; therefore, we assume that $M_0'\neq\emptyset$. Assume, towards a contradiction, that
\begin{equation}\label{e5.18}
\sum_{\{v,u\}\in E_G} \mathrm{dist}_H \big( f(v), f(u)\big) < \eta \, |M_0'|\, \log_{\Delta-1}m.
\end{equation}
For every integer $k\geqslant 0$, set
\[ T_k := \big\{ v\in [n]\colon \mathrm{dist}_G(v,M_0')=k\big\} \ \ \ \text{ and } \ \ \
E_k := \big\{ e\in E_G \colon e\cap T_k \neq \emptyset \text{ and } e\cap T_{k+1}\neq\emptyset\big\};   \]
we also set
\[ k_0 := \min \bigg\{ k\geqslant 0 \colon \Big|\bigcup_{\ell=0}^k T_\ell\Big| \geqslant \frac{15n}{16} \bigg\}. \]
Let $k\in [k_0]$; then, since $T_k = \big\{v\in[n] \colon \mathrm{dist}_G\big(v, B_G(M_0',k-1)\big)=1\big\}$, by the choice of $k_0$, Lemma~\ref{lemma5.2} applied  for ``$\xi = 1/16$'' and ``$A = B_G(M_0', k-1)$'', and Lemma~\ref{lemma5.3}, we have
\begin{equation} \label{e5.19}
|T_k| \geqslant \frac{1}{3\cdot 10^3}\, \big|B_G(M_0',k-1)\big| \geqslant
\frac{\tilde{\alpha}}{3\cdot 10^3 (d-1)} \, (d-1)^k\, |M_0'|,
\end{equation}
where $\tilde{\alpha}$ is as in \eqref{e5.5}. For every $k\in\{0,\dots,k_0-1\}$, set
\[ E_k^{\mathrm{Atyp}} := \bigg\{ \{v,u\}\in E_k \colon
\mathrm{dist}_H\big(f(v), f(u)\big) \geqslant\frac{\varepsilon}{6 (d-1)^{\frac{k}{2}}} \log_{\Delta-1}m\bigg\}, \]
\[ E_k^{\mathrm{Typ}} := E_k\setminus E_k^{\mathrm{Atyp}}. \]
We shall refer to the edges in $E_k^{\mathrm{Atyp}}$ as \textit{atypical}. By \eqref{e5.18}, Markov's inequality and the choice of $\eta$ in \eqref{e5.14}, we see that
\begin{equation} \label{e5.20}
|E_k^{\mathrm{Atyp}}| \leqslant \frac{6\eta}{\varepsilon} \, (d-1)^{\frac{k}{2}}\, |M_0'| =
\frac{\tilde{\alpha}}{2^7\cdot 3\cdot 10^3}(d-1)^{\frac{k}{2}}|M_0'|.
\end{equation}
For every $k\in[k_0]$, set
\begin{align*}
T_k^{\mathrm{Typ}} := \Big\{ v\in T_k\colon \text{there exists a path of } &
\text{length $k$ from a vertex in } M_0' \text{ to } v\\
& \text{such that all its edges are in } \bigcup_{\ell=0}^{k-1} E_\ell^{\mathrm{Typ}}\Big\}.
\end{align*}
Let us check that
\begin{equation}\label{e5.21}
|T_k^{\mathrm{Typ}}| \geqslant \Big( 1- \frac{1}{2^5} \Big)|T_k|.
\end{equation}
First, notice that for every $v\in T_k$, there exists a path of length $k$ from $M_0'$ to $v$; and, if $v\notin T_k^{\mathrm{Typ}}$, then one of the edges of this path must be atypical. Using this observation, the $d$-regularity of $G$, and the fact that $\sum_{\ell=0}^{\infty}(d-1)^{-\frac{\ell}{2}}\leqslant 4$, the claimed inequality \eqref{e5.21} follows:
\begin{align*}
|T_k\setminus T_k^{\mathrm{Typ}}| & \leqslant \sum_{\ell=1}^k |E_{\ell-1}^{\mathrm{Atyp}}|\, (d-1)^{k-\ell}
\stackrel{\eqref{e5.20}}{\leqslant}
\frac{\tilde{\alpha}}{2^7\cdot 3\cdot 10^3} \sum_{\ell=1}^k (d-1)^{\frac{\ell-1}{2}} |M_0'|(d-1)^{k-\ell} \\
& \leqslant  \frac{\tilde{\alpha}}{2^7\cdot 3\cdot 10^3\cdot\sqrt{d-1}}\, (d-1)^k\, |M_0'|\,
\sum_{\ell=1}^k (d-1)^{-\frac{\ell}{2}} \\
& \leqslant \frac{\tilde{\alpha}}{2^5\cdot 3\cdot 10^3\cdot (d-1)}\, (d-1)^k\, |M_0'|
\stackrel{\eqref{e5.19}}{\leqslant} \frac{1}{2^5}\, |T_k|.
\end{align*}
Since $M_0', T_1,\dots, T_{k_0}, M_2(f,\varepsilon)$ are pairwise disjoint subsets of $[n]$ and $|M_2(f,\varepsilon)|\geqslant \frac{n}{8}$, by \eqref{e5.21}, there exists $k_*\in [k_0]$ such that, setting
\[ M_2^{\mathrm{Typ}} := M_2(f,\varepsilon)\cap T_{k_*}^{\mathrm{Typ}}, \]
we have
\begin{equation}\label{e5.22}
|M_2^{\mathrm{Typ}}| \geqslant \frac{1}{2^5} |T_{k_*}|.
\end{equation}
For every $v\in M_0'$, set
\[ T(v) :=\big\{ u\in T_{k_*}\colon \mathrm{dist}_G(v,u) = k_*\big\}; \]
also, set
\[ A := \big\{ v\in M_0' \colon T_{k_*}(v) \cap M_2^{\mathrm{Typ}} \neq\emptyset\big\}. \]
Using again the fact that $\sum_{k=1}^{\infty}(d-1)^{-\frac{k}{2}}\leqslant3$, we have for every $v\in A$ and every $u\in T(v)\cap M_2^{\mathrm{Typ}}$,
\[  \mathrm{dist}_H\big(f(v), f(u)\big) \leqslant
\frac{\varepsilon}{6} \log_{\Delta-1}m \sum_{k=1}^{\infty}(d-1)^{-\frac{k}{2}} \leqslant
\frac{\varepsilon}{2} \log_{\Delta-1}m. \]
Therefore,
\begin{equation}\label{e5.23}
f(A)\subseteq B_H \bigg( f(M_2^{\mathrm{Typ}}), \Big\lfloor \frac{\varepsilon}{2} \log_{\Delta-1}m \Big\rfloor\bigg)
\subseteq B_H \bigg( f\big( M_2(f,\varepsilon)\big), \Big\lfloor \frac{\varepsilon}{2} \log_{\Delta-1}m \Big\rfloor\bigg).
\end{equation}
Since $G$ is $d$-regular, for every $v\in M_0'$, we have $|T(v)|\leqslant d(d-1)^{k_*-1}\leqslant3(d-1)^{k_*}$.
Hence, by the definition of $A$ and inequalities \eqref{e5.19} and \eqref{e5.22}, we obtain that
\[ |A| \geqslant \frac{|M_2^{\mathrm{Typ}}|}{3(d-1)^{k_*}} \geqslant
\frac{\tilde{\alpha}}{2^5 \, 3^2 \, 10^3 \, (d-1)} |M_0'|. \]
By the definition of $M_0'$ in \eqref{e5.12}, we may select a positive integer $r$ with $r^*-2\log_2 m \leqslant r\leqslant r^*$ such that
\[ |A\cap M_{0,r}| \geqslant \frac{\tilde{\alpha}}{2^5\, 3^2\, 10^3\, (d-1)} |M_{0,r}| \ \ \
\text{ and } \ \ \ |f(M_{0,r})|\geqslant m^{1-2\varepsilon}, \]
where $M_{0,r}$ is as in \eqref{e5.10}. Consequently, $|M_{0,r}|\geqslant |f(M_{0,r})|\,2^{r-1}\geqslant m^{1-2\varepsilon}\,2^{r-1}$ which, in turn, implies that
\[ |f(A)| \geqslant|f(A\cap M_{0,r})| \geqslant\frac{|A\cap M_{0,r}|}{2^r}
\geqslant \frac{\tilde{\alpha}}{2^5\, 3^2\, 10^3\, (d-1)} \cdot \frac{|M_{0,r}|}{2^r}
\geqslant \frac{\tilde{\alpha}}{2^6\, 3^2\, 10^3\, (d-1) } m^{1-2\varepsilon}. \]
On the other hand, $\big|f\big(M_2(f,\varepsilon)\big)\big|\leqslant m^{1-4\varepsilon}$ and so, by \eqref{e5.23}, there exists $x\in f\big(M_2(f,\varepsilon)\big)$~such~that
\begin{equation} \label{e5.24}
\bigg|B_H \bigg( x, \Big\lfloor \frac{\varepsilon}{2} \log_{\Delta-1}m \Big\rfloor \bigg)\bigg| \geqslant
\frac{|f(A)|}{\big|f\big(M_2(f,\varepsilon)\big)\big|} \geqslant
\frac{\tilde{\alpha}}{2^6\, 3^2\, 10^3\, (d-1)} m^{2\varepsilon}.
\end{equation}
However, by the $\Delta$-regularity of $H$, we have
\begin{equation}\label{e5.25}
\bigg|B_H \bigg( x,\Big\lfloor \frac{\varepsilon}{2} \log_{\Delta-1}m \Big\rfloor \bigg)\bigg| \leqslant
3 m^{\frac{\varepsilon}{2}}.
\end{equation}
If $m$ is sufficiently large in terms of $d, \alpha$ and $\varepsilon$, then, by \eqref{e5.24} and \eqref{e5.25}, we derive a contradiction. The proof of Claim \ref{claim5.9} is thus completed.
\end{proof}

The following claim is the second and final step in the proof of Lemma \ref{l5.8}.

\begin{claim} \label{claim5.10}
We have
\begin{equation} \label{e5.27}
\sum_{\substack{\{v,u\}\in E_G \\\{v,u\}\cap M_0''\neq \emptyset}} \mathrm{dist}_H \big( f(v), f(u)\big) \leqslant
84 \cdot 10^2 \cdot \frac{\log_{\Delta-1}m}{m} \cdot \sum_{\{v,u\}\in E_G} \mathrm{dist}_H \big( f(v), f(u)\big)
\end{equation}
and
\begin{equation} \label{e5.28}
\sum_{\substack{v,u\in [n] \\ \{v,u\}\cap M_0''\neq \emptyset}} \mathrm{dist}_H \big( f(v), f(u)\big) \leqslant
48\cdot \frac{\log_{\Delta-1}m}{m} \cdot \sum_{v,u\in [n]} \mathrm{dist}_H \big( f(v), f(u)\big),
\end{equation}
where $M_0''$ is as in \eqref{e5.12}.
\end{claim}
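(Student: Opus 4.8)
The plan is to use the top dyadic level $M_{0,r^*}$ as a ``reference cluster''. The crucial point is that $M_0''$, being the union of the dyadic levels at least $2\log_2 m$ below $r^*$, is smaller than $M_{0,r^*}$ by a genuine factor of order $m$, whereas everything we must compare differs only by a factor of order $\diam(H)=O(\log m)$. We may assume $M_0''\neq\emptyset$ (otherwise both left-hand sides vanish). The elementary facts we shall use are: each level satisfies $|M_{0,r}|<m\,2^r$ (its image has at most $m$ elements, each of fiber-size $<2^r$), so $|M_0''|<\sum_{r<r^*-2\log_2 m}m\,2^r<2^{r^*+1}/m$; by definition of $r^*$ there is a value $x_0$ with $f^{-1}(x_0)\subseteq M_{0,r^*}$ and $|f^{-1}(x_0)|\geqslant 2^{r^*-1}$, hence $|M_{0,r^*}|\geqslant 2^{r^*-1}>\tfrac{m}{4}\,|M_0''|$; and, since $M_{0,r^*}\subseteq M_0(f,\varepsilon)$, every fiber meeting $M_{0,r^*}$ (in particular $f^{-1}(x_0)$) has size at most $(n/m)m^{2\varepsilon}\leqslant n/3$ (recall $\varepsilon\leqslant 10^{-4}$ and $m\geqslant\Delta+1\geqslant 4$). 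The hypothesis $|M_2(f,\varepsilon)|\geqslant n/8$ will not be needed.

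For \eqref{e5.27}, the left-hand side is at most $3d\,|M_0''|\log_{\Delta-1}m$, since each vertex of $M_0''$ meets at most $d$ edges of $G$ and each edge contributes at most $\diam(H)\leqslant 3\log_{\Delta-1}m$ by part~(\hyperref[Part-B-R(e)]{$B$}) of property~$\mathcal{R}(\varepsilon)$. For the right-hand side, Lemma~\ref{lemma5.2} applied with $\xi=1/2$ to $A=f^{-1}(x_0)$ (of size $\leqslant n/3\leqslant n/2$) produces at least $\tfrac{d}{200}|f^{-1}(x_0)|\geqslant\tfrac{d}{200}2^{r^*-1}$ edges of $G$ joining $f^{-1}(x_0)$ to its complement; any such edge $\{v,u\}$ has $f(v)=x_0\neq f(u)$, so $\dist_H(f(v),f(u))\geqslant 1$, whence $\sum_{\{v,u\}\in E_G}\dist_H(f(v),f(u))\geqslant\tfrac{d}{200}2^{r^*-1}>\tfrac{d}{800}\,m\,|M_0''|$. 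Dividing the two bounds proves \eqref{e5.27} with large room to spare.

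For \eqref{e5.28}, bound the left-hand side by $2\sum_{v\in M_0''}\sum_{u\in[n]}\dist_H(f(v),f(u))$ and, for each $v\in M_0''$, average the triangle inequality $\dist_H(f(v),f(u))\leqslant\dist_H(f(v),f(w))+\dist_H(f(w),f(u))$ over $w\in M_{0,r^*}$; summing over $u\in[n]$ and then over $v\in M_0''$ gives
\[ \sum_{v\in M_0''}\sum_{u\in[n]}\dist_H(f(v),f(u))\ \leqslant\ n\,|M_0''|\,\diam(H)\ +\ \frac{|M_0''|}{|M_{0,r^*}|}\,S,\qquad S:=\sum_{w\in M_{0,r^*}}\sum_{u\in[n]}\dist_H(f(w),f(u)). \]
Here $S\leqslant\sum_{v,u\in[n]}\dist_H(f(v),f(u))$ and $|M_0''|<\tfrac{4}{m}|M_{0,r^*}|$, so the second summand is $\leqslant\tfrac{4}{m}\sum_{v,u\in[n]}\dist_H(f(v),f(u))$. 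For the first summand, since each fiber meeting $M_{0,r^*}$ has size $\leqslant n/3$ we get $S\geqslant\sum_{w\in M_{0,r^*}}\big(n-|f^{-1}(f(w))|\big)\geqslant\tfrac23 n\,|M_{0,r^*}|>\tfrac16\,n\,m\,|M_0''|$, which turns $n|M_0''|\diam(H)\leqslant 3n|M_0''|\log_{\Delta-1}m$ into at most $\tfrac{36\log_{\Delta-1}m}{m}\sum_{v,u\in[n]}\dist_H(f(v),f(u))$; collecting the two contributions and using $\log_{\Delta-1}m>1$ (valid since $m\geqslant\Delta+1$) yields \eqref{e5.28}.

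I expect the only genuine obstacle to be the conceptual one already visible above: the naive estimate—bounding each left-hand side by (the number of edges, resp.\ pairs, meeting $M_0''$) times $\diam(H)$—loses a factor of order $\log m$ and is useless, since the global sums on the right can be as small as a constant multiple of the number of bichromatic edges, resp.\ pairs. What rescues the argument is that placing the truncation $2\log_2 m$ below $r^*$ in Definition~\ref{def-dyadic} makes $|M_0''|$ genuinely a factor $\sim m$ smaller than $|M_{0,r^*}|$, together with the fact that $M_{0,r^*}$—all of whose fibers are small—simultaneously forces $\sum_{E_G}\dist_H$ to be large (through the vertex expansion of Lemma~\ref{lemma5.2}) and $\sum_{v,u\in[n]}\dist_H$ to be large (through a trivial count of non-monochromatic pairs). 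The constants then fall below $8400$ and $48$; getting them exactly right requires only being slightly careful that the fibers of $M_0(f,\varepsilon)$ are comfortably smaller than $n$.
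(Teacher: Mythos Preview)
Your argument is correct and follows the same core strategy as the paper: establish $|M_0''|=O(|M_{0,r^*}|/m)$ via the dyadic definition, then use $M_{0,r^*}$ to force a lower bound on each right-hand side. The differences are in how those lower bounds are produced. For \eqref{e5.27}, the paper applies Lemma~\ref{lemma5.2} to the whole set $M_{0,r^*}$ with $\xi=1/8$, using the standing hypothesis $|M_2(f,\varepsilon)|\geqslant n/8$ to guarantee $|M_{0,r^*}|\leqslant 7n/8$; you instead apply it to the single fiber $f^{-1}(x_0)\subseteq M_{0,r^*}$ with $\xi=1/2$, relying only on the fiber-size bound coming from the definition of $M_0(f,\varepsilon)$. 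For \eqref{e5.28}, the paper simply counts ordered pairs in $M_{0,r^*}\times M_2(f,\varepsilon)$ (each contributing at least $1$) and again invokes $|M_2(f,\varepsilon)|\geqslant n/8$; you route through a triangle-inequality averaging over $M_{0,r^*}$ and then lower-bound $S$ by counting bichromatic pairs from $M_{0,r^*}$. Your variant is thus slightly more self-contained---it avoids the hypothesis $|M_2(f,\varepsilon)|\geqslant n/8$ entirely---at the price of a somewhat longer argument for \eqref{e5.28}; in fact, combining the paper's direct upper bound $6n|M_0''|\log_{\Delta-1}m$ for the left side of \eqref{e5.28} with your lower bound $S\geqslant\tfrac{2}{3}n|M_{0,r^*}|$ would give a cleaner hybrid that still avoids $M_2$.
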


\begin{proof}[Proof of Claim \ref{claim5.10}]
Clearly, we may assume that $M_0''\neq\emptyset$. Set
\[ E_0:= \big\{ e\in E_G \colon e\cap M_0''\neq \emptyset\big\} \ \ \ \text{ and } \ \ \
E_1:= \big\{e\in E_G \colon e\cap M_{0,r^*}\neq\emptyset \text{ and } e\setminus M_{0,r^*}\neq \emptyset\big\}. \]
Since $M_{0,r^*},M_2(f,\varepsilon)$ are disjoint and $|M_2(f,\varepsilon)|\geqslant \frac{n}{8}$, by Lemma \ref{lemma5.2} applied for ``$\xi=1/8$'' and ``$A=M_{0,r^*}$'', we obtain that
\begin{equation}\label{e5.29}
|E_1|\geqslant \frac{d}{2\cdot 7\cdot 10^2}|M_{0,r^*}|.
\end{equation}
Moreover, as $|M_{0,r^*}|\geqslant 2^{r^*-1}$ and $\sum_{r=1}^{\lfloor r^*-2\log_2m\rfloor}|f(M_{0,r})|\leqslant |\mathrm{Im}f|\leqslant m$, we have that
\begin{equation} \label{e5.30}
|M_0''|\leqslant \sum_{r=1}^{\lfloor r^*-2\log_2m\rfloor} 2^r\, |f(M_{0,r})| \leqslant
\frac{2^{r^*}}{m} \leqslant \frac{2\,|M_{0,r^*}|}{m}.
\end{equation}
By part~(\hyperref[Part-B-R(e)]{$B$}) of property~$\mathcal{R}(\varepsilon)$, the definition of $E_0$, and the $d$-regularity of $G$,
\begin{align}\label{e5.31}
\sum_{\{v,u\}\in E_0}\mathrm{dist}_H\big(f(v), f(u)\big) & \ \leqslant |E_0|\, 3\, \log_{\Delta-1}m \leqslant
d\, |M_0''|\, 3\, \log_{\Delta-1}m \\
& \stackrel{\eqref{e5.30}}{\leqslant} \frac{6\,d\, \log_{\Delta-1}m}{m} \, |M_{0,r^*}|
\stackrel{\eqref{e5.29}}{\leqslant} 84\cdot 10^2\cdot \frac{\log_{\Delta-1}m}{m}\, |E_1|. \nonumber
\end{align}
Since $\mathrm{dist}_H\big(f(v), f(u)\big)\geqslant1$ for every $\{v,u\}\in E_1$, we see that \eqref{e5.27} follows from \eqref{e5.31}.

Finally, using again part~(\hyperref[Part-B-R(e)]{$B$}) of property~$\mathcal{R}(\varepsilon)$ and the fact that $|M_2(f,\varepsilon)|\geqslant \frac{n}{8}$, we have
\begin{equation}\label{e5.32}
\sum_{\substack{v,u\in [n] \\ \{v,u\}\cap M_0''\neq \emptyset}} \mathrm{dist}_H \big( f(v), f(u)\big)
\leqslant 6\, |M_0''|\, n \, \log_{\Delta-1}m
\stackrel{\eqref{e5.30}}{\leqslant}
96\cdot \frac{\log_{\Delta-1}m}{m} \cdot |M_{0,r^*}| \cdot |M_2(f,\varepsilon)|.
\end{equation}
Observing that $\mathrm{dist}_H\big(f(v), f(u)\big)\geqslant1$ for every $(v,u)\in M_{0,r^*}\times M_2(f,\varepsilon)$, we see that \eqref{e5.28} follows from \eqref{e5.32}. The proof of Claim \ref{claim5.10} is completed.
\end{proof}

We are now in a position to complete the proof of Lemma \ref{l5.8}.

\begin{proof}[Proof of Lemma \ref{l5.8}]
Notice that, almost everywhere, $\hat f$ and $\boldsymbol{f}$ differ only on $M_0'$. Hence, by the $d$-regularity of~$G$ and part~(\hyperref[Part-B-R(e)]{$B$}) of property~$\mathcal{R}(\varepsilon)$, we have, almost everywhere,
\begin{align*}
\sum_{\{v,u\}\in E_G}\mathrm{dist}_H\big( \boldsymbol{f}(v), \boldsymbol{f}(u)  \big)
& \leqslant \sum_{\{v,u\}\in E_G}\mathrm{dist}_H\big( \hat f(v), \hat f(u)  \big)
+\sum_{\substack{\{v,u\}\in E_G \\ \{v,u\}\cap M_0'\neq\emptyset}}
\mathrm{dist}_H\big( \boldsymbol{f}(v), \boldsymbol{f}(u)  \big)\\
& \, \leqslant \sum_{\{v,u\}\in E_G}\mathrm{dist}_H\big( \hat f(v), \hat f(u)  \big)
+ 3\, d\, |M_0'|\, \log_{\Delta-1}m.
\end{align*}
Consequently, by Claim \ref{claim5.9}, if $m\geqslant m_4$, then we have, almost everywhere,
\begin{equation}\label{e5.33}
\sum_{\{v,u\}\in E_G}\mathrm{dist}_H\big( \boldsymbol{f}(v), \boldsymbol{f}(u)  \big)
\leqslant \sum_{\{v,u\}\in E_G}\mathrm{dist}_H\big( \hat f(v), \hat f(u)  \big)
+\frac{3d}{\eta}\, \sum_{\{v,u\}\in E_G} \mathrm{dist}_H \big( f(v), f(u)\big).
\end{equation}
On the other hand, observe that $\hat f$ and $f$ differ only on $M_0''$ and $\hat f$ is constant on $M_0''$. Therefore, by part~(\hyperref[Part-B-R(e)]{$B$}) of property~$\mathcal{R}(\varepsilon)$ and the fact that $\mathrm{dist}_H\big( f(v), f(u)\big)\geqslant 1$ if $v\in M_0''$ and $u\not\in M_0''$,
\begin{align*}
\sum_{\{v,u\}\in E_G}\mathrm{dist}_H\big( \hat f(v), \hat f(u)  \big)
& \leqslant \sum_{\{v,u\}\in E_G}\mathrm{dist}_H\big( f(v), f(u)  \big)
+ \sum_{\substack{\{v,u\}\in E_G\\ v\in M_0'', u\not\in M_0''}}\mathrm{dist}_H\big( \hat f(v), \hat f(u)  \big)\\
& \leqslant \sum_{\{v,u\}\in E_G}\mathrm{dist}_H\big( f(v), f(u)  \big)
+ 3\, \log_{\Delta-1}m\, \sum_{\substack{\{v,u\}\in E_G\\ v\in M_0'', u\not\in M_0''}}\mathrm{dist}_H\big( f(v),  f(u)  \big),
\end{align*}
which further implies, by \eqref{e5.27} and the fact that $\Delta\geqslant 3$,
\begin{align} \label{e5.34}
\sum_{\{v,u\}\in E_G}\mathrm{dist}_H\big( \hat f(v), \hat f(u)  \big)
& \leqslant \Big(1+252 \cdot 10 ^2\cdot  \frac{\log^2_{\Delta-1}m}{m}\Big)
\sum_{\{v,u\}\in E_G}\mathrm{dist}_H\big( f(v), f(u)  \big) \\
& \leqslant \Big(1+252 \cdot 10 ^2\cdot  \frac{\log^2_2 m}{m}\Big)
\sum_{\{v,u\}\in E_G}\mathrm{dist}_H\big( f(v), f(u)  \big). \nonumber
\end{align}
Since $d/\eta\geqslant 2$, we see that there exists a positive integer $m_5=m_5(d,\alpha,\varepsilon)$ such that \eqref{e5.15} follows from \eqref{e5.33} and \eqref{e5.34} for every integer $m\geqslant m_5$.

We proceed to the proof of \eqref{e5.16}. Again observe that $f$ and $\hat f$ differ only on $M_0''$, and that $\hat f$ is constant on $M_0''$. Therefore, by \eqref{e5.28} and the fact that $\Delta\geqslant 3$,
\begin{align*}
\sum_{v,u\in [n]} \mathrm{dist}_H\big( f(v), f(u)  \big)
& = \sum_{v,u\in [n]} \mathrm{dist}_H\big(\hat f(v),\hat f(u)  \big)
+ \sum_{\substack{v,u\in [n] \\ \{v,u\}\cap M_0''\neq\emptyset}} \mathrm{dist}_H\big( f(v), f(u)  \big)\\
& \leqslant\sum_{v,u\in [n]} \mathrm{dist}_H\big(\hat f(v),\hat f(u)  \big)
+ 48\cdot  \frac{\log_{\Delta-1}m}{m} \cdot \sum_{v,u\in [n]} \mathrm{dist}_H \big( f(v), f(u)\big) \\
& \leqslant\sum_{v,u\in [n]} \mathrm{dist}_H\big(\hat f(v),\hat f(u)  \big)
+ 48\cdot  \frac{\log_2 m}{m} \cdot \sum_{v,u\in [n]} \mathrm{dist}_H \big( f(v), f(u)\big).
\end{align*}
Thus, there exists an absolute positive integer $m_6$ such that for every integer $m\geqslant m_6$,
\begin{equation}\label{e5.37}
\frac{1}{2} \sum_{v,u\in [n]} \mathrm{dist}_H\big( f(v), f(u)  \big)
\leqslant \sum_{v,u\in [n]} \mathrm{dist}_H\big(\hat f(v),\hat f(u) \big).
\end{equation}
Next observe that
\[  \sum_{v\in M_0', u\not\in M_0'} \!\! \mathbb{E} \big[ \mathrm{dist}_H\big( \boldsymbol{f}(v),\boldsymbol{f}(u)\big) \big]
= \frac{1}{|M_0'|}\sum_{v,w\in M_0', u\not\in M_0'} \!\! \mathrm{dist}_H\big( \hat f(w), \hat f(u)  \big)
= \!\! \sum_{v\in M_0', u\not\in M_0'} \!\! \mathrm{dist}_H\big( \hat f(v), \hat f(u)  \big) \]
and
\[ \sum_{v,u\in M_0'}\mathbb{E}\big[\mathrm{dist}_H\big( \boldsymbol{f}(v), \boldsymbol{f}(u)  \big) \big]=0; \]
therefore,
\begin{equation}\label{e5.38}
  \sum_{v,u\in[n]} \mathbb{E}\big[\mathrm{dist}_H\big( \boldsymbol{f}(v), \boldsymbol{f}(u)  \big) \big]
  =  \sum_{v,u\in[n]} \mathrm{dist}_H\big( \hat f(v), \hat f(u)  \big)
  - \sum_{v,u\in M_0'} \mathrm{dist}_H\big( \hat f(v), \hat f(u)  \big).
\end{equation}
Since $n- |M_0'|\geqslant |M_2(f,\varepsilon)| \geqslant \frac{n}{8}$, we see that $\frac{|M_0'|}{n-|M_0'|} \leqslant 7$. Thus, by triangle inequality,
\begin{align*}
\sum_{v,u\in M_0'} \mathrm{dist}_H\big( \hat f(v), \hat f(u)  \big)
& \leqslant \frac{1}{n-|M_0'|} \sum_{v,u\in M_0',w\not\in M_0'}
\Big(\mathrm{dist}_H\big( \hat f(v), \hat f(w)  \big) +\mathrm{dist}_H\big( \hat f(w), \hat f(u) \big) \Big) \\
& \leqslant 2\, \frac{|M_0'|}{n-|M_0'|} \sum_{v\in M_0',u\not\in M_0'} \mathrm{dist}_H\big( \hat f(v), \hat f(u) \big)\\
&  \leqslant 7 \Big( \sum_{v,u\in [n]} \mathrm{dist}_H\big( \hat f(v), \hat f(u)  \big) -
\sum_{v,u\in M_0'} \mathrm{dist}_H\big( \hat f(v), \hat f(u)  \big)\Big),
\end{align*}
that implies that,
\begin{equation} \label{e5.39}
\sum_{v,u\in [n]} \mathrm{dist}_H\big( \hat f(v), \hat f(u)  \big)
\leqslant 8 \Big( \sum_{v,u\in [n]} \mathrm{dist}_H\big( \hat f(v), \hat f(u)  \big) -
\sum_{v,u\in M_0'} \mathrm{dist}_H\big( \hat f(v), \hat f(u)  \big)\Big).
\end{equation}
The desired estimate \eqref{e5.16} follows from \eqref{e5.37}--\eqref{e5.39} for every integer $m\geqslant m_6$. The proof of Lemma \ref{l5.8} is completed.
\end{proof}

\section{Proofs of the main results} \label{sec6}

Throughout this section, $d\geqslant 3$ is a fixed integer.

\subsection{Initializing various parameters} \label{subsec6.1}

We start by setting
\begin{align}
\label{e6.1} \alpha=\alpha(d) & := e^{-10^{11}\log^2 d}, \\
\label{e6.2} \varepsilon & := 10^{-4};
\end{align}
moreover, denoting by $K=K(d)>0$ the constant hidden by the big-O notation in the right-hand side of \eqref{e9}, we set
\begin{equation} \label{e6.3}
m_7=m_7(d):= \max\Big\{ m_0(d),m_1(d,\alpha,\varepsilon), \frac{1}{K}\Big\},
\end{equation}
where $m_0(d)$ and $m_1(d,\alpha,\varepsilon)$ are as in Proposition \ref{small-M2} and Proposition \ref{large-M2}, respectively, for the choices of $\alpha$ and $\varepsilon$ in \eqref{e6.1} and \eqref{e6.2}.

Next, let $n_0(d)$ denote the least positive integer such that for every integer $n\geqslant n_0(d)$, with probability at least $\frac12$, a~uniformly random graph $G\in G(n,d)$ satisfies property $\mathcal{D}(\alpha)$, where $\alpha$ is as in \eqref{e6.1}. (Notice that, by Proposition \ref{typical}, $n_0(d)$ is well-defined.) Finally, we set
\begin{equation} \label{e6.4}
n_1=n_1(d):= \max\big\{ 4d, 200, n_0(d) \big\}.
\end{equation}

\subsection{Definition of ``good" events} \label{susec6.2}

Let $n\geqslant n_1$, $\Delta\geqslant 3$ and $m\geqslant m_7$ be integers, and set
\begin{align}
\label{e6.5} & \ \ \mathcal{D}_n := \big\{G\in G(n,d)\colon G \text{ satisfies property } \mathcal{D}(\alpha)\big\}, \\
\label{e6.6} & \mathcal{R}_{m,\Delta} := \big\{H\in G(m,\Delta)\colon H \text{ satisfies property } \mathcal{R}(\varepsilon)\big\},
\end{align}
where $\alpha$ and $\varepsilon$ are as in \eqref{e6.1} and \eqref{e6.2}, respectively. We also set
\[ \mathbb{P}_n[\, \cdot \,]:= \mathbb{P}_{G(n,d)}\big[\, \cdot \, \big| \, \mathcal{D}_n\big] \ \ \ \text{ and } \ \ \
\mathbb{P}_{m,\Delta}[\, \cdot \,]:= \mathbb{P}_{G(m,\Delta)}\big[\, \cdot \, \big| \, \mathcal{R}_{m,\Delta}\big], \]
and we define
\begin{equation} \label{e6.7}
\mathcal{E}_{n,m,\Delta}:= \Big\{ (G,H)\in \mathcal{D}_n\times \mathcal{R}_{m,\Delta}\colon
\gamma(G,\dist_H)\leqslant \ \max\big\{\Gamma_1(\varepsilon),\Gamma_2(d,\alpha,\varepsilon)\big\} \Big\},
\end{equation}
where $\Gamma_1(\varepsilon)$ and $\Gamma_2(d,\alpha,\varepsilon)$ are as in \eqref{e8} and \eqref{e10}, respectively, for the choices of $\alpha$ and $\varepsilon$ in \eqref{e6.1} and \eqref{e6.2}.

We will need the following corollary of Propositions \ref{small-M2} and \ref{large-M2}. The proof is straightforward.

\begin{corollary} \label{cor6.1}
For every triple $n\geqslant n_1$, $\Delta\geqslant 3$ and $m\geqslant m_7$ of integers, we have
\begin{equation} \label{e6.8}
\mathbb{P}_n \times \mathbb{P}_{m,\Delta}\big[ \mathcal{E}_{n,m,\Delta}\big] \geqslant
1- \frac{1}{m^{0.04n}}.
\end{equation}
\end{corollary}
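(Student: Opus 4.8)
The plan is to stitch together Propositions \ref{small-M2} and \ref{large-M2}: every map $f\colon[n]\to[m]$ lies in exactly one of the two regimes $|M_2(f,\varepsilon)|\leqslant\frac n8$ or $|M_2(f,\varepsilon)|\geqslant\frac n8$ (the boundary case being admissible for either), and the two propositions control the corresponding ``cheap'' and ``expensive'' contributions. The only bookkeeping input is that the parameters fixed in Subsections \ref{subsec6.1}--\ref{susec6.2} are mutually compatible: by \eqref{e6.3} and \eqref{e6.4} we have $m\geqslant m_7\geqslant\max\{m_0(d),m_1(d,\alpha,\varepsilon),1/K\}$ and $n\geqslant n_1\geqslant\max\{4d,200,n_0(d)\}$, so every numerical precondition of the two propositions is met for the choices $\alpha,\varepsilon$ of \eqref{e6.1}--\eqref{e6.2}.

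First I would fix an arbitrary $H\in\mathcal{R}_{m,\Delta}$. By part~(\hyperref[Part-B-R(e)]{$B$}) of property $\mathcal{R}(\varepsilon)$ such an $H$ is connected with $\diam(H)\leqslant 3\log_{\Delta-1}m$, so Proposition \ref{small-M2} (applicable since $m\geqslant m_0(d)$ and $n\geqslant 4d$) produces a set $\mathcal{B}_H\subseteq G(n,d)$ with $\mathbb{P}_{G(n,d)}[\mathcal{B}_H]\geqslant 1-K\,m^{-0.05n}$, consisting of those $G$ for which the Poincar\'e inequality with constant $\Gamma_1(\varepsilon)$ holds for every $f$ with $|M_2(f,\varepsilon)|\leqslant\frac n8$. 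On the other hand, Proposition \ref{large-M2} (applicable since $m\geqslant m_1(d,\alpha,\varepsilon)$ and $n\geqslant d$) is deterministic: whenever $G\in\mathcal{D}_n$ and $H\in\mathcal{R}_{m,\Delta}$, the Poincar\'e inequality with constant $\Gamma_2(d,\alpha,\varepsilon)$ holds for every $f$ with $|M_2(f,\varepsilon)|\geqslant\frac n8$. Hence, for every $G\in\mathcal{D}_n\cap\mathcal{B}_H$ and every $f\colon[n]\to[m]$, one of the two cases applies and the Poincar\'e inequality holds with constant $\max\{\Gamma_1(\varepsilon),\Gamma_2(d,\alpha,\varepsilon)\}$; by the definition of $\gamma(G,\dist_H)$ this gives $\gamma(G,\dist_H)\leqslant\max\{\Gamma_1(\varepsilon),\Gamma_2(d,\alpha,\varepsilon)\}$, i.e. $(G,H)\in\mathcal{E}_{n,m,\Delta}$. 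Thus $\mathcal{E}_{n,m,\Delta}\supseteq\{(G,H)\colon H\in\mathcal{R}_{m,\Delta},\ G\in\mathcal{D}_n\cap\mathcal{B}_H\}$.

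It remains to transfer this inclusion to the conditional measure. Since $n\geqslant n_0(d)$ we have $\mathbb{P}_{G(n,d)}[\mathcal{D}_n]\geqslant\frac12$, so for each fixed $H\in\mathcal{R}_{m,\Delta}$,
\[
\mathbb{P}_n\big[G(n,d)\setminus\mathcal{B}_H\big]=\frac{\mathbb{P}_{G(n,d)}\big[(G(n,d)\setminus\mathcal{B}_H)\cap\mathcal{D}_n\big]}{\mathbb{P}_{G(n,d)}[\mathcal{D}_n]}\leqslant 2\,\mathbb{P}_{G(n,d)}\big[G(n,d)\setminus\mathcal{B}_H\big]\leqslant 2K\,m^{-0.05n}.
\]
Because $\mathbb{P}_n$ is supported on $\mathcal{D}_n$, integrating over $H\sim\mathbb{P}_{m,\Delta}$ and using the displayed inclusion yields $\mathbb{P}_n\times\mathbb{P}_{m,\Delta}[\mathcal{E}_{n,m,\Delta}]\geqslant 1-2K\,m^{-0.05n}$. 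Finally, since $n\geqslant 200$ and $m\geqslant m_7\geqslant\max\{m_0(d),1/K\}$, a one-line estimate gives $2K\leqslant m^{0.01n}$, so $2K\,m^{-0.05n}\leqslant m^{-0.04n}$, which is exactly \eqref{e6.8}.

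I do not anticipate a genuine obstacle here: all the real content is in Propositions \ref{small-M2} and \ref{large-M2}, and the only points that require care are (a) checking that the parameter choices of Subsection \ref{subsec6.1} simultaneously satisfy every precondition of those two propositions, and (b) correctly absorbing the factor $2$ lost when passing from $\mathbb{P}_{G(n,d)}$ to $\mathbb{P}_n$ — which is precisely why $n_1$ was chosen so that $\mathbb{P}_{G(n,d)}[\mathcal{D}_n]\geqslant\frac12$, and why $m_7\geqslant 1/K$ enters in the final numerical step.
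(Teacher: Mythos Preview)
Your proposal is correct and follows exactly the approach the paper has in mind: the paper simply declares the corollary a ``straightforward'' consequence of Propositions~\ref{small-M2} and~\ref{large-M2}, and your argument spells out precisely that combination, including the reason the parameters $n_1$ and $m_7$ were set up as in Subsection~\ref{subsec6.1}. The only minor remark is that in the final numerical step the inequality $2K\leqslant m^{0.01n}$ really uses that $m\geqslant m_0(d)$ is large and $n\geqslant 200$ (indeed, the proof of Proposition~\ref{small-M2} shows one may take $K=1$), rather than the condition $m\geqslant 1/K$ alone, but the conclusion is unaffected.
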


\subsection{Proof of Theorem \ref{thm-kleinberg}} \label{subsec6.3}

By Corollary \ref{cor6.1}, Theorem \ref{extrapolation} and \eqref{e-cheeger-e1}, for every triple $n\geqslant n_1$, $\Delta\geqslant 3$ and $m\geqslant m_7$ of integers, we have
\[ \mathbb{P}_n \times \mathbb{P}_{m,\Delta}\big[(G,H)\colon \gamma(G,\dist_H^p)\leqslant \Gamma(d,p)
\text{ for every } p\geqslant 1\big] \geqslant
1- \frac{1}{m^{0.04n}}, \]
where $\Gamma(d,p)$ is as in \eqref{klein-e1}. The result follows from this estimate and Proposition \ref{typical}.

\subsection{Proof of Corollary \ref{cor-bi-Lip}} \label{subsec6.4}

By Theorem \ref{thm-kleinberg}, it is enough to show that $c_H(G)\gtrsim_d \log n$ for every $G\in G(n,d)$ and every $H\in G(m,\Delta)$ with $\gamma(G,\dist_H)\leqslant \Gamma(d,1)$, where $\Gamma(d,1)$ is as in \eqref{klein-e1}. To this end, let $e\colon [n]\to [m]$ be any map such that $s\,\dist_G(v,u)\leqslant \dist_H\big((e(v),e(u)\big)\leqslant sD\, \dist_G(v,u)$ for all $v,u\in [n]$,
where $s,D>0$. Then, by the $d$-regularity of $G$, we obtain that
\begin{align*}
\log n & \lesssim_d \frac{1}{n^2} \sum_{v,u\in [n]}\dist_G(v,u) \leqslant
\frac{1}{s} \cdot \frac{1}{n^2} \sum_{v,u\in [n]}\dist_H\big(e(v),e(u)\big) \\
& \leqslant \frac{\Gamma(d,1)}{s} \cdot \frac{1}{|E_G|} \sum_{\{v,u\}\in E_G}\dist_H\big(e(v),e(u)\big)
\leqslant \Gamma(d,1)\cdot D \stackrel{\eqref{klein-e1}}{\lesssim_d} D,
\end{align*}
that yields that $c_H(G)\gtrsim_d \log n$.

\subsection{Proof of Theorem \ref{mn-random}} \label{subsec6.5}

It is based on the following lemma.

\begin{lemma} \label{lem6.2}
For every integer $n\geqslant n_1$, there exists a subset $\mathcal{A}_n$ of\, $G(n,d)$ with
\begin{equation} \label{e6.9}
\mathbb{P}_n\big[\mathcal{A}_n\big] \geqslant 1-\frac{1}{2^{n/200}}
\end{equation}
such that for every $G\in\mathcal{A}_n$, every integer $\Delta\geqslant 3$ and every integer $m\geqslant m_7$, we have
\begin{equation} \label{e6.10}
\mathbb{P}_{m,\Delta}\big[H \colon \gamma(G,\dist_H^p)\leqslant \Gamma(d,p)
\text{ for every } p\geqslant 1 \big] \geqslant 1- \frac{1}{m^{0.02n}},
\end{equation}
where $\Gamma(d,p)$ is as in is as in \eqref{klein-e1}.
\end{lemma}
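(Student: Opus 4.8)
The plan is to extract Lemma \ref{lem6.2} from the product-measure estimate of Corollary \ref{cor6.1} by a soft quantifier-swapping argument: Fubini's theorem combined with Markov's inequality will let us fix a \emph{single} good domain graph $G$, outside a set of small $\mathbb{P}_n$-measure, for which, \emph{for every} $m$ and $\Delta$ simultaneously, most range graphs $H$ are good. The only point requiring care is the union bound over the pair $(m,\Delta)$.

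First I would record the a priori estimate that is actually needed. In the proof of Theorem \ref{thm-kleinberg} carried out in Subsection \ref{subsec6.3}, it is shown---via Corollary \ref{cor6.1}, Theorem \ref{extrapolation} and \eqref{e-cheeger-e1}---that for all integers $n\geqslant n_1$, $\Delta\geqslant 3$ and $m\geqslant m_7$,
\begin{equation} \label{plan-apriori}
\mathbb{P}_n\times\mathbb{P}_{m,\Delta}\Big[(G,H)\colon \gamma(G,\dist_H^p)\leqslant\Gamma(d,p)\text{ for every }p\geqslant 1\Big]\geqslant 1-\frac{1}{m^{0.04n}}.
\end{equation}
(Here the upgrade from the single exponent $p=1$ controlled inside $\mathcal{E}_{n,m,\Delta}$ to all $p\geqslant 1$ uses only that $G\in\mathcal{D}_n$ forces $h(G)\geqslant 0.005\,d$, hence $d/h(G)\leqslant 200$, together with the metric extrapolation of Theorem \ref{extrapolation}; this is exactly the computation already done in Subsection \ref{subsec6.3}.) Write $\mathcal{F}_{n,m,\Delta}\subseteq\mathcal{D}_n\times\mathcal{R}_{m,\Delta}$ for the event inside the bracket in \eqref{plan-apriori}. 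Since $\mathbb{P}_{m,\Delta}$ is supported on $\mathcal{R}_{m,\Delta}$, for a fixed $G\in\mathcal{D}_n$ the quantity $\mathbb{P}_{m,\Delta}\big[H\colon(G,H)\in\mathcal{F}_{n,m,\Delta}\big]$ is precisely the probability appearing in \eqref{e6.10}.

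Now fix $n\geqslant n_1$. For each pair of integers $(m,\Delta)$ with $m\geqslant m_7$, $\Delta\geqslant 3$ and $G(m,\Delta)\neq\emptyset$, set $g_{m,\Delta}(G):=\mathbb{P}_{m,\Delta}\big[H\colon(G,H)\notin\mathcal{F}_{n,m,\Delta}\big]$ for $G\in\mathcal{D}_n$. By Fubini's theorem and \eqref{plan-apriori}, the $\mathbb{P}_n$-average of $g_{m,\Delta}$ is at most $m^{-0.04n}$, so Markov's inequality yields
\[ \mathbb{P}_n\Big[G\colon g_{m,\Delta}(G)>\tfrac{1}{m^{0.02n}}\Big]\leqslant\frac{m^{-0.04n}}{m^{-0.02n}}=\frac{1}{m^{0.02n}}. \]
Let $\mathcal{B}_n$ be the union of these exceptional sets over all admissible pairs $(m,\Delta)$, and set $\mathcal{A}_n:=\mathcal{D}_n\setminus\mathcal{B}_n$. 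Then for every $G\in\mathcal{A}_n$, every $\Delta\geqslant 3$ and every $m\geqslant m_7$ we have $g_{m,\Delta}(G)\leqslant m^{-0.02n}$ (the cases with $G(m,\Delta)=\emptyset$ being vacuous), which is exactly \eqref{e6.10}. To bound $\mathbb{P}_n[\mathcal{B}_n]$, note that for each $m$ there are at most $m$ integers $\Delta\geqslant 3$ with $G(m,\Delta)\neq\emptyset$, whence, using $n\geqslant n_1\geqslant 200$ (so $1-0.02n\leqslant -3$) and $m_7\geqslant 2$,
\[ \mathbb{P}_n[\mathcal{B}_n]\leqslant\sum_{m\geqslant m_7}\frac{m}{m^{0.02n}}=\sum_{m\geqslant m_7}m^{1-0.02n}\leqslant\sum_{m\geqslant 2}m^{1-0.02n}\leqslant 2^{2-0.02n}\leqslant\frac{1}{2^{n/200}}, \]
the last step because $n\geqslant 200$ gives $2\leqslant 0.015\,n$. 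This is \eqref{e6.9}, and the proof is complete.

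I do not expect any genuine obstacle: the whole argument is soft, and its only delicate point is the bookkeeping behind the double union bound over $(m,\Delta)$. This is handled by observing that, for each $m$, only about $m$ values of $\Delta$ occur, and that the Markov threshold $m^{-0.02n}$---essentially the square root of the a priori failure probability $m^{-0.04n}$---leaves ample room once $n\geqslant 200$; the constant $200$ in the definition of $n_1$ is chosen precisely so that $\sum_{m}m^{1-0.02n}\leqslant 2^{-n/200}$. All of the substantive content---the concentration estimate of Proposition \ref{small-M2} and the compression argument underlying Proposition \ref{large-M2} and Lemma \ref{l5.8}---has already been carried out upstream.
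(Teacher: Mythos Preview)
Your proof is correct and follows essentially the same route as the paper: the paper's ``double counting argument'' is precisely the Fubini/Markov step you write out, splitting the exponent $0.04n$ into $0.02n+0.02n$ to define $\mathcal{A}_n^{m,\Delta}$ and then intersecting over all $(m,\Delta)$. The only cosmetic difference is that you apply the metric extrapolation (Theorem \ref{extrapolation}) \emph{before} the Fubini/Markov step, upgrading to all $p\geqslant 1$ at the level of the product event, whereas the paper carries the $p=1$ event $\mathcal{E}_{n,m,\Delta}$ through the argument and extrapolates only at the end; since extrapolation depends solely on $G\in\mathcal{D}_n$, the two orderings are interchangeable.
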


Granting Lemma \ref{lem6.2}, the proof of Theorem \ref{mn-random} follows by setting $C(d):=n_1$ and using a union bound, the fact that for every integer $m\geqslant 3$ we have
\[ \sum_{n\geqslant n_1} \frac{1}{m^{0.02n}}=O\Big(\frac{1}{m}\Big), \]
and Proposition \ref{typical}.

So, it remains to prove Lemma \ref{lem6.2}. To this end, we observe that, by Corollary \ref{cor6.1} and a double counting argument, for every triple $n\geqslant n_1$, $\Delta\geqslant 3$ and $m\geqslant m_7$ of integers, there exists a subset $\mathcal{A}_n^{m,\Delta}$ of $G(n,d)$ with
\begin{equation} \label{e6.11}
\mathbb{P}_n\big[\mathcal{A}_n^{m,\Delta}\big] \geqslant 1-\frac{1}{m^{0.02n}}
\end{equation}
such that for every $G\in\mathcal{A}_n^{m,\Delta}$, we have
\begin{equation} \label{e6.12}
\mathbb{P}_{m,\Delta}\big[H\colon (G,H)\in \mathcal{E}_{n,m,\Delta}\big] \geqslant 1-\frac{1}{m^{0.02n}}.
\end{equation}
For every integer $n\geqslant n_1$, we set
\begin{equation} \label{e6.13}
\mathcal{A}_n := \bigcap_{m\geqslant m_7} \bigcap_{\substack{3 \leqslant \Delta\leqslant m \\ \Delta m \text{ even}}}
\mathcal{A}_n^{m,\Delta}
\end{equation}
and we observe that
\begin{equation} \label{e6.14}
\mathbb{P}_n\big[\mathcal{A}_n\big] \geqslant 1-\sum_{m\geqslant m_7}
\sum_{\substack{3 \leqslant \Delta\leqslant m \\ \Delta m \text{ even}}}
\frac{1}{m^{0.02n}} \geqslant 1-\frac{1}{2^{n/200}}.
\end{equation}
Next, fix $G\in\mathcal{A}_n$, and let $\Delta\geqslant 3$ and $m\geqslant m_7$ be arbitrary. Since $G\in \mathcal{A}_n\subseteq \mathcal{A}_n^{m,\Delta}$, by the definition of $\mathcal{E}_{n,m,\Delta}$ in \eqref{e6.7}, the estimate in \eqref{e6.12} and the choice of $\Gamma(d,1)$ in \eqref{klein-e1}, we see that
\begin{equation} \label{e6.15}
\mathbb{P}_{m,\Delta}\big[H \colon \gamma(G,\dist_H)\leqslant \Gamma(d,1) \big] \geqslant 1- \frac{1}{m^{0.02n}}.
\end{equation}
After observing that every graph in $\mathcal{A}_n$ satisfies property $\mathcal{R}(\alpha)$, where $\alpha$ is as in \eqref{e6.1}, the desired estimate \eqref{e6.10} follows from \eqref{e6.15}, Theorem \ref{extrapolation} and \eqref{e-cheeger-e1}. This completes the proof of Lemma~\ref{lem6.2}, and so the proof of Theorem \ref{mn-random} is also completed.

\subsection{Proof of Theorem \ref{universal-approximator}} \label{subsec6.6}

The proof is identical to that of \cite[Theorem 2.1]{MN15} with the main new ingredient being Theorem \ref{mn-random}. We recall the argument for the convenience of the reader.

Let $C(3)$ be the integer obtained by Theorem \ref{mn-random} applied for $d=3$ and, for every even integer $2k\geqslant C(3)$, we fix $G_{2k}\in G(2k,3)$ such that, for every integer $\Delta\geqslant 3$,
\begin{equation} \label{e6.16}
\mathbb{P}_{G(m,\Delta)} \bigg[H\colon \sup_{k\geqslant C(3)} \gamma(G_{2k},\dist_H^p)\leqslant \Gamma(p)
\text{ for every } p\geqslant 1 \bigg] \geqslant 1-O_{\Delta}\Big(\frac{1}{m^{\tau''}}\Big),
\end{equation}
where $\tau''=\tau''(\Delta)$ is as in Theorem \ref{mn-random}.

For every positive integer $k$, we define a multi-graph $U_k$ on $[k]$ as follows. If $k<C(3)$, then let $U_k$ be the complete graph on $[k]$ vertices. Next assume that $k\geqslant C(3)$; we partition the set $[2k]$ into disjoint set $A_1,\dots,A_k$ each of size $2$ and, for every $i,j\in [k]$, we put the edge (or the self-loop) $\{i,j\}$ in $E_{U_k}$ for each $\{v,u\}\in E_{G_{2k}}$ with $v\in A_i$ and~$u\in A_j$. Since $G_{2k}$ is $3$-regular, we see that $|E_{U_k}|=|E_{G_{2k}}|=3k$.

Let $\Delta\geqslant 3$ be an integer, and let $H$ be a $\Delta$-regular graph such that for every $p\geqslant 1$,
\begin{equation} \label{uni-e1}
\sup_{k\geqslant C(3)} \gamma(G_{2k},\dist_H^p)\leqslant \Gamma(p).
\end{equation}
By \eqref{e6.16}, it is enough to show that, for every $p\geqslant 1$ and every positive integer $k$, the multi-graph $U_k$ is a $\big(2^p\Gamma(p)\big)$-universal approximator of $\dist_H^p$. Notice that if $k<C(3)$, then this follows automatically by the choice of $U_k$; therefore, we may assume that $k\geqslant C(3)$. Fix $x_1,\dots,x_k\in V_H$, and define $f\colon [2k]\to V_H$ by setting $f(v)=x_i$ if $v\in A_i$. Then observe that
\begin{align} \label{uni-e2}
\frac{1}{k^2} \sum_{i,j=1}^k \dist_H(x_i,x_j)^p & \ = \frac{1}{(2k)^2} \sum_{v,u\in [2k]}  \dist_H\big(f(v),f(u)\big)^p \\
& \stackrel{\eqref{uni-e1}}{\leqslant} \frac{\Gamma(p)}{|E_{G_{2k}}|} \sum_{\{v,u\}\in E_{G_{2k}}}
\dist_H\big(f(v),f(u)\big)^p \nonumber \\
& \ \leqslant \frac{2^p\Gamma(p)}{(2k)^2} \sum_{v,u\in [2k]} \dist_H\big( f(v),f(u)\big)^p =
\frac{2^p\Gamma(p)}{k^2} \sum_{i,j=1}^k \dist_H(x_i,x_j)^p, \nonumber
\end{align}
where the last inequality in \eqref{uni-e2} follows by averaging the elementary estimate
\[ \dist_H\big(f(v),f(u)\big)^p\leqslant 2^{p-1} \dist_H\big(f(v),f(w)\big)^p +
2^{p-1} \dist_H\big(f(w),f(u)\big)^p \]
over the set $\big\{(v,u,w)\in [2k]^3\colon \{v,u\}\in E_{G_{2k}}\big\}$ and using the $3$-regularity of $G_{2k}$. Moreover,
\begin{equation} \label{uni-e3}
\frac{\Gamma(p)}{|E_{G_{2k}}|} \sum_{\{v,u\}\in E_{G_{2k}}} \dist_H\big(f(v),f(u)\big)^p =
\frac{\Gamma(p)}{|E_{U_k}|} \sum_{\{i,j\}\in E_{U_k}} \dist_H(x_i,x_j)^p.
\end{equation}
The result follows from \eqref{uni-e2} and \eqref{uni-e3}.

\begin{remark}
The proof of Theorem \ref{universal-approximator} yields that $|E_{U_k}|\leqslant \max\big\{3k,C(3)^2\big\}$ for every positive integer $k$.
\end{remark}

\subsection*{Acknowledgments}

We thank the authors of \cite{EMN25} for sharing their preprint after our work was posted on arXiv. We also thank Assaf Naor for many valuable suggestions.

The research was supported in the framework of H.F.R.I call ``Basic research Financing (Horizontal support of all Sciences)" under the National Recovery and Resilience Plan ``Greece 2.0" funded by the European Union--NextGenerationEU (H.F.R.I. Project Number: 15866).
The third named author (K.T.) is partially supported by the NSF grant DMS 2331037.


\begin{thebibliography}{99}

\bibitem[ADTT24]{ADTT24}
D. J. Altschuler, P. Dodos, K. Tikhomirov and K. Tyros,
\emph{A combinatorial approach to nonlinear spectral gaps},
preprint (2024), available at \url{https://arxiv.org/abs/2410.04394}.

\bibitem[ADTT25]{ADTT25}
D. J. Altschuler, P. Dodos, K. Tikhomirov and K. Tyros,
\emph{Metric Poincar\'{e} inequalities},
preprint (2025).

\bibitem[ANNRW18]{ANNRW18} A. Andoni, A. Naor, A. Nikolov, I. Razenshteyn and E. Waingarten,
\emph{Data-dependent hashing via nonlinear spectral gaps},
in ``Proceedings of the 50th Annual ACM SIGACT Symposium on Theory of Computing"\!, STOC 18, 787--800.

\bibitem[ALNRRV12]{ALNRRV12} S. Arora, L. Lov\'{a}sz, I. Newman, Y. Rabani, Y. Rabinovich and S. Vempala,
\emph{Local versus global properties of metric spaces},
SIAM J. Comput. 41 (2012), 250--271.

\bibitem[BLMN05]{BLMN05} Y. Bartal, N. Linial, M. Mendel and A. Naor,
\emph{On metric Ramsey-type phenomena},
Ann. Math. 162 (2005), 643--709.

\bibitem[BGS07]{BGS07} K. Barhum, O. Goldreich and A. Shraibman,
\emph{On approximating the average distance between points},
in ``Approximation, Randomization, and Combinatorial Optimization. Algorithms and Techniques", Proccedings of 10th International Workshop APPROX 2007 and 11th International Workshop RANDOM 2007, Lecture Notes in Computer Science, vol. 4627, Springer, 2007, 296--310.

\bibitem[BMW86]{BMW86} J. Bourgain, V. Milman and H. Wolfson,
\emph{On type of metric spaces},
Trans. Amer. Math. Soc. 294 (1986), 295--317.

\bibitem[E76]{E76} P. Enflo,
\emph{Uniform homeomorphisms between Banach spaces},
in ``S\'{e}min. Maurey--Schwartz 1975-1976, Espaces $L_p$, Appl. radonif., G\'{e}om. Espaces de Banach", Expos\'{e} XVIII, 1976, 1--7.

\bibitem[Es22]{Es22} A. Eskenazis,
\emph{Average distortion embeddings, nonlinear spectral gaps, and a metric John theorem $[$after Assaf Naor$]$},
S\'{e}minaire Bourbaki, Volume 2021/2022, Ast\'{e}risque 438 (2022), 295--333.

\bibitem[EMN25]{EMN25} A. Eskenazis, M. Mendel and A. Naor,
\emph{Hadamard subspaces are closed under metric transforms},
preprint (2025).

\bibitem[Fr08]{Fr08} J. Friedman,
\emph{A proof of Alon’s second eigenvalue conjecture and related problems},
Memoirs Amer. Math. Soc. 910 (2008), 1--100.

\bibitem[Gr83]{Gr83} M. Gromov,
\emph{Filling Riemannian manifolds},
J. Differential Geom. 18 (1983), 1--147.

\bibitem[Gr03]{Gr03} M. Gromov,
\emph{Random walk in random groups},
Geom. Funct. Anal. 13 (2003), 73--146.

\bibitem[HLMRZ25]{HLMRZ25}
J. T. Hsieh, A. Lubotzky, S. Mohanty, A. Reiner and R. Y. Zhang,
\emph{Explicit lossless vertex expanders},
preprint (2025), available at \url{https://arxiv.org/abs/2504.15087}.

\bibitem[HLW06]{HLW06} S. Hoory, N. Linial and A. Wigderson,
\emph{Expander graphs and their applications},
Bull. Amer. Math. Soc. 43 (2006), 439--561.

\bibitem[In99]{In99} P. Indyk,
\emph{Sublinear time algorithms for metric space problems},
in ``Annual ACM Symposium on Theory of Computing (Atlanta, GA, 1999)", ACM, New York, 1999, 428--432.

\bibitem[KY06]{KY06} G. Kasparov and G. Yu,
\emph{The coarse geometric Novikov conjecture and uniform convexity},
Adv. Math. 206 (2006), 1--56.

\bibitem[dLdS23]{dLdS23}
T. de Laat and M. de la Salle,
\emph{Actions of higher rank groups on uniformly convex Banach spaces},
preprint (2023), available at \url{https://arxiv.org/abs/2303.01405}.

\bibitem[La08]{La08} V. Lafforgue,
\emph{Un renforcement de la propri\'{e}t\'{e} (T)},
Duke Math. J. 143 (2008), 559--602.

\bibitem[LLR95]{LLR95} N. Linial, E. London and Y. Rabinovich,
\emph{The geometry of graphs and some of its algorithmic applications},
Combinatorica 15 (1995), 215--245.

\bibitem[Ma97]{Ma97} J. Matou\v{s}ek,
\emph{On embedding expanders into $\ell_p$ spaces},
Israel J. Math. 102 (1997), 189--197.

\bibitem[MW91]{MW91} B. D. McKay and N. C. Wormald,
\emph{Asymptotic enumeration by degree sequence of graphs with degrees {$o(n^{1/2})$}},
Combinatorica 11 (1991), 369--382.

\bibitem[MN13]{MN13} M. Mendel and A. Naor,
\emph{Spectral calculus and Lipschitz extension for barycentric metric spaces},
Anal. Geom. Metr. Spaces 1 (2013), 163--199.

\bibitem[MN14]{MN14} M. Mendel and A. Naor,
\emph{Nonlinear spectral calculus and super-expanders},
Publ. Math. Inst. Hautes \'{E}tud. Sci. 119 (2014), 1--95.

\bibitem[MN15]{MN15} M. Mendel and A. Naor,
\emph{Expanders with respect to Hadamard spaces and random graphs},
Duke Math. J. 164 (2015), 1471--1548.

\bibitem[Na14]{Na14} A. Naor,
\emph{Comparison of metric spectral gaps},
Anal. Geom. Metr. Spaces 2 (2014), 1--52.

\bibitem[NS11]{NS11} A. Naor and L. Silberman,
\emph{Poincar\'{e} inequalities, embeddings, and wild groups},
Compos. Math. 147 (2011), 1546--1572.

\bibitem[Oz04]{Oz04} N. Ozawa,
\emph{A note on non-amenability of $\mathcal{B}(\ell_p)$ for $p=$1,\,2},
Internat. J. Math. 15 (2004), 557--565.

\bibitem[Pi86]{Pi86} G. Pisier,
\emph{Probabilistic methods in the geometry of Banach spaces},
in ``Probability and Analysis", Lecture Notes in Mathematics,
vol. 1206, Springer, 1986, 167--241.

\bibitem[Pi10]{Pi10} G. Pisier,
\emph{Complex interpolation between Hilbert, Banach and operator spaces},
Memoirs Amer. Math. Soc. 978 (2010), 1--78.

\end{thebibliography}
\end{document}